\documentclass[letterpaper, 11pt]{amsart}  
\usepackage[utf8]{inputenc}
\usepackage[T1]{fontenc}
\usepackage[english]{babel}
\usepackage[centertags]{amsmath}
\usepackage{amstext,amssymb,amsopn,amsthm}
\usepackage{mathrsfs}
\usepackage{dsfont}
\usepackage{bbm}
\usepackage{thmtools}
\usepackage{graphicx}
\usepackage[titletoc,title]{appendix}

\usepackage[backgroundcolor=white, bordercolor=blue,
linecolor=blue]{todonotes}
\parskip1ex 

\usepackage[colorlinks=true, linkcolor=black, citecolor=black]{hyperref}
\usepackage{enumitem}
\setlist[enumerate]{itemsep=0mm}

\addto\extrasenglish{}
\addto\extrasenglish{}
\addto\extrasenglish{}



\parindent0ex    

%
%
%

\theoremstyle{plain}
\declaretheorem[title=Theorem, parent=section]{theorem}
\declaretheorem[title=Lemma,sibling=theorem]{lemma}

\declaretheorem[title=Corollary,sibling=theorem]{corollary}

\theoremstyle{definition}
\declaretheorem[title=Definition,sibling=theorem]{definition}
\declaretheorem[title=Remark,sibling=theorem]{remark}
\declaretheorem[title=Remark, numbered=no]{remark*}
\declaretheorem[title=Example, sibling=theorem]{example}
\declaretheorem[title=Assumption, numbered=no]{assumption*}

\numberwithin{equation}{section}

%
%
%
%
%
%
%
%
%
%



\newcommand{\N}{\mathds{N}}
\newcommand{\R}{\mathds{R}}

\def\hmath$#1${\texorpdfstring{{\rmfamily\textit{#1}}}{#1}}

\newcommand{\cE}{\mathcal{E}}

\newcommand{\eps}{\varepsilon}

\newcommand{\BIGOP}[1]
{
\mathop{\mathchoice%
{\raise-0.22em\hbox{\huge $#1$}}%
{\raise-0.05em\hbox{\Large $#1$}}{\hbox{\large $#1$}}{#1}}}

\def\Xint#1{\mathchoice
   {\XXint\displaystyle\textstyle{#1}}%
   {\XXint\textstyle\scriptstyle{#1}}%
   {\XXint\scriptstyle\scriptscriptstyle{#1}}%
   {\XXint\scriptscriptstyle\scriptscriptstyle{#1}}%
   \!\int}
\def\XXint#1#2#3{{\setbox0=\hbox{$#1{#2#3}{\int}$}
     \vcenter{\hbox{$#2#3$}}\kern-.5\wd0}}

\def\dashint{\Xint-}
\newcommand{\BIGboxplus}{\mathop{\mathchoice%
{\raise-0.35em\hbox{\huge $\boxplus$}}%
{\raise-0.15em\hbox{\Large $\boxplus$}}{\hbox{\large $\boxplus$}}{\boxplus}}}


\DeclareMathOperator{\supp}{supp}

\DeclareMathOperator{\dvg}{div}

\DeclareMathOperator{\tail}{Tail}
\DeclareMathOperator{\pv}{p.v.}
\newcommand{\U}{\widetilde{u}}

\renewcommand{\d}{\textnormal{d}}


\begin{document}
\allowdisplaybreaks
 \title{The parabolic Harnack inequality for nonlocal equations}

\author{Moritz Kassmann}
\author{Marvin Weidner}

\address{Fakult\"{a}t f\"{u}r Mathematik\\Universit\"{a}t Bielefeld\\Postfach 100131\\D-33501 Bielefeld}
\email{moritz.kassmann@uni-bielefeld.de}
\urladdr{www.math.uni-bielefeld.de/$\sim$kassmann}

\address{Departament de Matemàtiques i Informàtica, Universitat de Barcelona, Gran Via de les Corts Catalanes 585, 08007 Barcelona, Spain}
\email{mweidner@ub.edu}
\urladdr{https://sites.google.com/view/marvinweidner/}

\keywords{Nonlocal operator, parabolic Harnack inequality, Hölder regularity, Moser iteration, De Giorgi technique}

\thanks{Moritz Kassmann gratefully acknowledges financial support by the German Research Foundation (SFB 1283 - 317210226). Marvin Weidner has received funding from the European Research Council (ERC) under the Grant Agreement No 801867 and by the AEI project PID2021-125021NA-I00 (Spain).}

\subjclass[2010]{47G20, 35B65, 31B05, 60J75, 35K90}

\begin{abstract}
We complete the local regularity program for weak solutions to linear parabolic nonlocal equations with bounded measurable coefficients. Within the variational framework we prove the parabolic Harnack inequality and Hölder regularity estimates. We discuss in detail the shortcomings of previous results in this direction. The key element of our approach is a fine study of the nonlocal tail term.
\end{abstract}

\maketitle

\section{Introduction}  
\label{sec:intro}
The aim of this work is to prove the Harnack inequality for weak solutions to the parabolic equation
\begin{align*}
\partial_t u - L_t u = 0 ~~ \text{ in } I \times \Omega.
\end{align*}
Here, $I \subset [0,\infty)$ is a finite open interval and $\Omega \subset \R^d$ is a bounded domain. $L_t$ is a linear nonlocal operator of the form
\begin{align*}
-L_t u (x) = \pv \int_{\R^d} (u(x) - u(y)) K(t;x,y) \d y,
\end{align*}
and $K : [0,\infty) \times \R^d \times \R^d \to [0,\infty]$ is a measurable jumping kernel satisfying for some $\Lambda \ge \lambda > 0$
\begin{align}
\label{eq:Kcomp}\tag{$K_{\asymp}$}
\lambda (2-\alpha) |x-y|^{-d-\alpha} \le K(t;x,y) \le \Lambda (2-\alpha) |x-y|^{-d-\alpha}
\end{align}
for every $t \in I$ and $x,y \in \R^d$, where $\alpha \in [\alpha_0, 2)$ for some $\alpha_0 \in (0,2)$. Moreover, we assume that $K$ is symmetric, i.e.
\begin{align*}
K(t;x,y) = K(t;y,x) ~~ \forall t \in I, ~~ \forall x,y \in \R^d.
\end{align*}
Due to this symmetry condition, $L_t$ can be interpreted as a nonlocal operator in divergence form and the equation $\partial_t u - L_t u$ has to be interpreted in the weak sense. It is thus natural to use variational methods in order to analyze the solutions.

Let us explain our results. One aim of this work is to prove in \autoref{thm:fHI} the parabolic Harnack inequality for nonlocal operators with bounded measurable coefficients. Thus we complete the De Giorgi-Nash-Moser theory for parabolic equations governed by operators of the form \eqref{eq:Kcomp}. Existing versions of this result are either more restrictive or incomplete as we explain below. An interesting peculiarity is that, so far, there has been no analytic proof of this result, not even for kernels satisfying \eqref{eq:Kcomp}. It is an interesting task to establish the parabolic Harnack inequality under weaker assumptions than \eqref{eq:Kcomp}. In \autoref{sec:ext} we provide such corresponding extensions. \\
A standard application of the parabolic Harnack inequality is the derivation of parabolic Hölder regularity estimates. Previous works have proved these estimates under the unnatural additional assumption that the parabolic tail term is bounded in time. We obtain Hölder regularity estimates in \autoref{thm:HRE} under a weaker $L^{1+\eps}$-condition. As we show in a counterexample, see \autoref{ex:counterexample}, this assumption is optimal. Previous results have missed this point, which we explain below in detail. 

\subsection*{Main results}

In order to state our main result, let us introduce some helpful notation for different time-intervals. Let $t_0 \in [0,\infty)$ and $R > 0$. We define
\begin{align*}
	I_R^{\oplus}(t_0) &= (t_0, t_0+R^{\alpha}),~~ I_R^{\ominus}(t_0) = (t_0-R^{\alpha}, t_0),~~ I_R(t_0) = (t_0-R^{\alpha}, t_0+R^{\alpha}).
\end{align*}

\begin{theorem}[full Harnack inequality]
	\label{thm:fHI}
	Assume that $K$ satisfies \eqref{eq:Kcomp}. Then, for every $R > 0$, $t_0 \in I$, $x_0 \in \Omega$ with $I_{4R}(t_0) \times B_{4R}(x_0) \subset I \times \Omega$, and any globally nonnegative solution $u$ to $\partial_t u - L_t u = 0$ in $I \times \Omega$ it holds
	\begin{align}
		\label{eq:fPHI}
				\sup_{I^{\ominus}_{R}(t_0 - R^{\alpha}) \times B_R(x_0)} u \le c \inf_{I_{R}^{\oplus}(t_0) \times B_R(x_0)} u.
	\end{align}
	Here, $c = c(d,\alpha_0,\lambda,\Lambda) > 0$ is a constant.
\end{theorem}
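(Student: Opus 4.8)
The plan is to run the classical Moser iteration scheme; the entire difficulty lies in controlling the nonlocal tail terms, which here is handled by exploiting the \emph{global} nonnegativity of $u$. I would first establish the parabolic Caccioppoli inequality for powers of $u$: test the weak formulation with $\varphi^2\eta^2(u+\delta)^{q}$, where $\varphi$ cuts off in space and $\eta$ in time, and bound the resulting nonlocal bilinear form from below by a Gagliardo seminorm of $\varphi\eta(u+\delta)^{(q+1)/2}$ via the elementary inequality $(a-b)(a^{q}-b^{q})\gtrsim (a^{(q+1)/2}-b^{(q+1)/2})^2$, paying for the localization with a tail contribution of size $\sim r^{-\alpha}$ times the exterior values of $u$. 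The structural point is that for a subsolution the exterior contribution enters the estimate as $+\tail(u_+)$, while for a \emph{nonnegative} supersolution it can only help; in both cases it can be tracked with a definite sign, and the $(2-\alpha)$-normalization in \eqref{eq:Kcomp} keeps all constants uniform as $\alpha\uparrow 2$.

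Next I would iterate. Running the energy inequality with exponents $q\to+\infty$, alternating a Sobolev inequality in space with time slicing, gives a local $L^\infty$ bound for nonnegative subsolutions: for $0<\sigma<1$,
\[
\esssup_{I^{\ominus}_{\sigma r}(\tau)\times B_{\sigma r}(x_0)} u \le \frac{C}{(1-\sigma)^{\beta}}\Big[\Big(\dashint_{I^{\ominus}_{r}(\tau)\times B_r(x_0)} u^{p}\Big)^{1/p} + \Big(\dashint_{I^{\ominus}_r(\tau)}\tail(u;B_r(x_0))^{1+\eps}\,\d t\Big)^{\frac{1}{1+\eps}}\Big]
\]
for any $p\in(0,2]$, the refinement over earlier works being that the tail appears only through an $L^{1+\eps}$-in-time norm. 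For the positive supersolution $u+\delta$ I would, on the one hand, iterate with exponents $q<0$ (equivalently, apply the previous bound to $(u+\delta)^{-1}$) to obtain a lower bound $\inf_{I^{\oplus}_{\sigma r}(t_0)\times B_{\sigma r}(x_0)}(u+\delta)\ge c(1-\sigma)^{\beta}\big(\dashint_{I^{\oplus}_{r}(t_0)\times B_r(x_0)}(u+\delta)^{-p}\big)^{-1/p}$, and on the other hand iterate with small $q\in(0,p)$ on past cylinders; the two ranges of exponents are bridged by the logarithmic estimate, obtained by testing the supersolution equation with $\varphi^2/(u+\delta)$, which places $\log(u+\delta)$ in a parabolic John--Nirenberg class with the sharp time scaling and produces a common exponent $p_0=p_0(d,\alpha_0,\lambda,\Lambda)>0$. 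Chaining these estimates along a suitable sequence of cylinders inside $I_{4R}(t_0)\times B_{4R}(x_0)$, and letting $\delta\to0$, yields
\[
\sup_{I^{\ominus}_R(t_0-R^\alpha)\times B_R(x_0)} u \le C\inf_{I^{\oplus}_R(t_0)\times B_R(x_0)} u + C\,\mathcal{T},
\]
where $\mathcal{T}$ is a time-averaged tail of $u$ over the exterior of a ball of radius comparable to $R$ (the exponent gap in the logarithmic estimate being exactly what forces the time shift $t_0\mapsto t_0-R^\alpha$).

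The remaining---and decisive---step is to absorb $\mathcal{T}$ into $\inf_{I^{\oplus}_R(t_0)\times B_R}u$; this is where global nonnegativity is indispensable and where previous arguments were incomplete. I would split the defining exterior integral into the intermediate annuli contained in $B_{4R}(x_0)\subset\Omega$, on which $u$ solves the equation so that averages of $u$ there are controlled by the weak Harnack machinery at scale $\sim R$, and the genuinely far part $\int_{\R^d\setminus B_{4R}(x_0)}u(t,y)|y-x_0|^{-d-\alpha}\,\d y$. On the latter region $u$ is only known to be nonnegative, but for a supersolution the exterior mass acts as a positive source---a jump from far away can only push $u$ up---so a barrier / expansion-of-positivity argument (equivalently, the interior lower heat-kernel bound) gives, for suitable past times, $\inf_{I^{\oplus}_R(t_0)\times B_R}u\gtrsim R^{\alpha}\int_{\R^d\setminus B_{4R}(x_0)}u(\cdot,y)|y-x_0|^{-d-\alpha}\,\d y$, i.e.\ precisely the far tail; feeding this back closes the estimate.

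I expect this last step to be the main obstacle: obtaining control of the far tail by $\inf_{I^{\oplus}_R(t_0)\times B_R}u$ \emph{uniformly in time} and with a constant independent of $\alpha\in[\alpha_0,2)$, which requires matching the tail at past times to the infimum at future times and hence running the positivity expansion carefully across the parabolic time gap---this is the fine study of the nonlocal tail term announced in the abstract. A secondary technical point is ensuring that every cylinder used in the chaining, together with the enlargements needed for the Sobolev and tail estimates, stays inside $I_{4R}(t_0)\times B_{4R}(x_0)$.
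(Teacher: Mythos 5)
Your overall architecture is right: prove a local boundedness estimate whose nonlocal tail term is integrated in time rather than sup'd in time, prove an upgraded weak Harnack inequality that bounds that same time-averaged tail by $\inf u$, and then combine the two via covering/absorption. You also correctly identify that the decisive new ingredient is the estimate $\dashint \tail(u(t);R,x_0)\,\d t \lesssim \inf_{I_R^\oplus(t_0)\times B_R} u$ for nonnegative supersolutions, and you correctly note that for that estimate you must exploit the positive sign of the exterior contribution to the bilinear form.

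However, that decisive step is precisely where your proposal stops being a proof. You propose to establish the tail-by-infimum bound via ``a barrier / expansion-of-positivity argument (equivalently, the interior lower heat-kernel bound),'' and you yourself flag this as ``the main obstacle'' without carrying it out. Two problems: first, lower heat kernel bounds presuppose exactly the sort of parabolic Harnack/regularity structure you are trying to prove, and for time-inhomogeneous kernels $K(t;x,y)$ with merely measurable $t$-dependence no such pointwise heat kernel machinery is available off the shelf; resorting to it reintroduces the probabilistic framework the paper explicitly avoids, and it is far from clear that the resulting constants would stay bounded as $\alpha\nearrow 2$. Second, you do not need any of that. The paper's mechanism is entirely variational and much shorter: test the supersolution inequality with $\phi=-\tau^2\widetilde u^{-p}$, $\widetilde u=u+\eps$, $p\in(0,1)$. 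When you expand $\cE^{K(t)}(u,-\tau^2\widetilde u^{-p})$ over $B_R\times B_R^c$, the cross term produces, \emph{with the favorable sign},
\begin{align*}
\Bigl(\int_{B_{R/2}}\widetilde u^{-p}(t,x)\,\d x\Bigr)\,R^{-\alpha}\tail(u(t);R,x_0)
\end{align*}
on the side you want, while everything else is absorbed by a Caccioppoli term $R^{-\alpha}\|\widetilde u^{1-p}\|_{L^1(B_R)}$. Integrating in $t$ against a cutoff and discarding the Gagliardo energy then yields
\begin{align*}
\dashint \tail(u(t);R,x_0)\,\d t \;\lesssim\; \Bigl(\sup_t \dashint_{B_{R/2}}\widetilde u^{\,p}\Bigr)\Bigl(\dashint \widetilde u^{\,1-p}\Bigr),
\end{align*}
and both factors are already controlled by $\inf u$ through the \emph{classical} weak parabolic Harnack inequality for small positive exponents (together with the sup-of-$L^p$ estimate from the Moser iteration). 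No splitting of the exterior region into intermediate annuli and a far part is needed here, no barrier, no heat kernel. You should locate and prove this Caccioppoli-type inequality with the extra tail term; without it, the rest of your argument has nothing to feed on and the Harnack inequality does not close.
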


\begin{remark}\label{rem:discussion-full-harnack}
	Let us discuss to which extent \autoref{thm:fHI} is new.
	\begin{itemize}
	\item[(i)] As a consequence of estimate \eqref{eq:fPHI} resp. of \autoref{thm:locbdL1tail},  weak solutions resp. subsolutions to $\partial_t u - L_t u = 0$ are locally bounded. This property has not been proved yet without additional assumptions. 
	\item[(ii)] In contrast to the Harnack inequality in \cite{BaLe02}, \cite{ChKu03}, \cite{CKW20}, \cite{CKW21},
	\autoref{thm:fHI} is robust as $\alpha \nearrow 2$ (see \autoref{rem:robust}). 
	\item[(iii)] Different from the probabilistic approach, we allow for time-in\-homo\-geneous jumping kernels and work with weak solutions in the natural function spaces, see \autoref{def:col-concept}.	
	\item[(iv)] An attempt to prove the parabolic Harnack inequality \eqref{eq:fPHI} for a parabolic exterior value problem with given continuous, bounded data is made in \cite{Kim19}. Lemma 5.3 and Theorem 5.4 need to be reformulated, though. They only hold for solutions $u \in H^1(I; X_g(\Omega))$, not for subsolutions. As a result, some constants in the proof like $\epsilon_0$ and $d_2$ in (5.14) depend on the data $g$. \footnote{The authors thank the author of \cite{Kim19} for discussions of this issue.} 
	\item[(v)] Different from \autoref{thm:fHI}, \cite{Str19b} established the parabolic Harnack inequality \eqref{eq:fPHI} for global solutions to $\partial_t u - L_t u = 0$ in $I \times \R^d$. The proof in \cite{Str19b} cannot be adapted to solutions on domains. 
	\item[(vi)] The statement of the Harnack inequality in \cite{KaWe22b} is different from \eqref{eq:fPHI} because \cite{KaWe22b} involves the tail of $u^+$. A special feature of \cite{KaWe22b} is that the approach allows for nonsymmetric kernels. 
	\item[(vii)] For the sake of accessibility, we state and prove the aforementioned results under the strong assumption \eqref{eq:Kcomp}. We explain possible relaxations of this condition and provide a corresponding extension of the results in \autoref{sec:ext}, see \autoref{thm:fHI_sharp-ass} and \autoref{thm:fHI_singular}.
\end{itemize}
\end{remark}

We make several remarks on possible extensions of our main result:

\begin{remark}
	Note that \autoref{thm:fHI} can be easily generalized to weak solutions $u$ to $\partial_t u - L_t u = f$, where $f \in L^{\mu,\theta}_{t,x}(I \times \Omega)$ satisfying $\frac{1}{\mu} + \frac{d}{\alpha\theta} < 1$ by using standard techniques. Moreover, it is possible to consider $f \in L^{1,\infty}_{t,x}(I \times \Omega)$. For more details, we refer to the proof of \autoref{cor:fPHI}.
\end{remark}

We extend \autoref{thm:fHI} in two ways. As shown in the proof of \autoref{thm:improvedwHI-parabolic}, one can estimate the tail of $u_+$ by the infimum of the solution $u$. Hence, one can allow for a larger left-hand side in \eqref{eq:fPHI}.  Moreover, the assumption on $u$ to be globally nonnegative can easily be weakened to $u$ nonnegative in $I \times \Omega$. As a consequence, the tail of $u_-$ needs to be added on the right-hand side in \eqref{eq:fPHI}. Given a measurable function $v:\R^d \to \R$ and a ball $B_R(x_0)$, the tail is defined as follows:
\begin{align} \label{eq:tail-def}
	\tail(v;R,x_0) = (2-\alpha) \int_{\R^d \setminus B_R(x_0)} |v(y)||x_0 - y|^{-d-\alpha} \d y \,.
\end{align}

\begin{corollary}[full Harnack inequality with tail]
	\label{cor:fPHI}
	Assume that $K$ satisfies \eqref{eq:Kcomp}. Then, for every $R > 0$, $t_0 \in I$, $x_0 \in \Omega$ with $I_{4R}(t_0) \times B_{4R}(x_0) \Subset I \times \Omega$, and any solution $u$ to $\partial_t u - L_t u = 0$ in $I \times \Omega$ satisfying $u \ge 0$ in $I \times \Omega$ it holds
	\begin{align}
	\begin{split}
		\label{eq:fPHI-minustail}
		&\dashint_{I^{\ominus}_{R}(t_0 - R^{\alpha})} \tail(u_+(t);R,x_0) \d t \; + \sup_{I^{\ominus}_{R}(t_0 - R^{\alpha}) \times B_R(x_0)} u \\ 
		& \qquad\qquad\qquad \le c \inf_{I_{R}^{\oplus}(t_0) \times B_R(x_0)} u + c \;  \dashint_{I_{4R}(t_0)} \tail(u_-(t);4R,x_0) \d t.
		\end{split}
	\end{align}
	Here, $c = c(d,\alpha_0,\lambda,\Lambda) > 0$ is a constant.
\end{corollary}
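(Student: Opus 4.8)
The plan is to deduce Corollary~\ref{cor:fPHI} from Theorem~\ref{thm:fHI} by a localization argument together with the tail estimates that, per the discussion preceding the statement, are available from the proof of Theorem~\ref{thm:improvedwHI-parabolic} and from Theorem~\ref{thm:locbdL1tail}. The first step is to reduce from a solution $u$ that is merely nonnegative on $I \times \Omega$ to one that is globally nonnegative. To this end, write $u = u_+ - u_-$ and observe that $u_-$ vanishes on $I \times \Omega$, so that $u_-$ only contributes through the nonlocal tail. More precisely, $v := u_+$ is a nonnegative (on all of $\R^d$) weak subsolution to $\partial_t v - L_t v = g$ on $I \times \Omega$, where the inhomogeneity $g(t,x)$ is controlled pointwise for $x \in B_{2R}(x_0)$ by a constant times $\tail(u_-(t); 4R, x_0)$; this is the standard splitting used throughout nonlocal regularity theory, exploiting that the singular part of the integral defining $L_t$ only sees $u$ near $x_0$, where $u = u_+$, while the far-away negative part is absorbed into $g$.

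Next I would apply the full Harnack inequality of Theorem~\ref{thm:fHI} — in the inhomogeneous form indicated in the remark following it, allowing a right-hand side $f \in L^{1,\infty}_{t,x}$ — to $v = u_+$ on a slightly enlarged cylinder. This gives
\begin{align*}
\sup_{I^{\ominus}_R(t_0 - R^{\alpha}) \times B_R(x_0)} u \le c \inf_{I_R^{\oplus}(t_0) \times B_R(x_0)} v + c \, \dashint_{I_{4R}(t_0)} \tail(u_-(t); 4R, x_0) \,\d t,
\end{align*}
and since $v = u_+ \ge u$ and in fact $\inf_{I_R^{\oplus}(t_0)\times B_R(x_0)} v \le \inf u + (\text{tail of } u_-)$ on the relevant set — or, more cleanly, since on $I \times \Omega$ one has $u \ge 0$ so $v = u$ there — the infimum term is exactly $\inf_{I_R^{\oplus}(t_0)\times B_R(x_0)} u$ up to the tail correction. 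This already yields the bound for the supremum term on the left of \eqref{eq:fPHI-minustail}.

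For the remaining term, the averaged tail $\dashint_{I^{\ominus}_R(t_0 - R^{\alpha})} \tail(u_+(t); R, x_0)\,\d t$ on the left-hand side, I would invoke the tail estimate from the proof of Theorem~\ref{thm:improvedwHI-parabolic}: for a nonnegative solution, the time-averaged tail of $u_+$ over a past cylinder is controlled by the infimum of $u$ over a future cylinder (this is precisely the ``improved weak Harnack'' mechanism referenced in the paragraph before the corollary). Combining this with the previous display — after possibly shrinking radii by universal factors and using the standard covering/chaining argument to connect the various sub-cylinders appearing in $I_{4R}$, $I_R^{\ominus}$ and $I_R^{\oplus}$, which only costs a constant depending on $d, \alpha_0, \lambda, \Lambda$ — gives \eqref{eq:fPHI-minustail} with a constant of the asserted form.

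The main obstacle I anticipate is bookkeeping with the cylinders and making sure the inhomogeneity $g$ genuinely lies in the function class for which the inhomogeneous version of Theorem~\ref{thm:fHI} applies: one needs $t \mapsto \tail(u_-(t); 4R, x_0)$ to be, say, in $L^1$ in time on $I_{4R}(t_0)$, which is exactly what the averaged-tail formulation on the right-hand side of \eqref{eq:fPHI-minustail} presupposes, so the statement is self-consistent but the argument must be phrased so that a possibly infinite tail simply makes the inequality vacuous. A secondary technical point is that $u_+$ is only a subsolution, not a solution, so one must use the half of the Harnack machinery (local boundedness of subsolutions, i.e. Theorem~\ref{thm:locbdL1tail}) for the supremum bound and the weak Harnack for solutions for the infimum bound, rather than a single black-box application; coupling these two correctly on overlapping time intervals is where the care is needed.
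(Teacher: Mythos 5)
Your proposal is essentially the paper's proof: decompose $u = u_+ - u_-$, observe that $u_+$ satisfies an inhomogeneous equation $\partial_t u_+ - L_t u_+ = f$ with $\|f(t)\|_{L^\infty} \lesssim \tail(u_-(t);4R,x_0)$, and then apply the $L^{1,\infty}$-inhomogeneous versions of the local boundedness estimate (\autoref{thm:locbdL1tail}) and the weak Harnack inequality with $L^1$-tail (\autoref{thm:improvedwHI-parabolic}). The one inaccuracy is your claim that $u_+$ is ``only a subsolution, not a solution'': since $u \ge 0$ in $I\times\Omega$ one has $u_+ = u$ there, so $u_+$ is a genuine solution of the inhomogeneous equation in $I\times\Omega$ (the source being exactly $f = L_t u_-$, which is supported outside $\Omega$ and hence only contributes a bounded-in-space, $L^1$-in-time term). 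This makes your closing worry about ``coupling the two halves on overlapping time intervals'' a non-issue: both the subsolution estimate and the supersolution estimate apply simultaneously to $u_+$, which is precisely what the paper does.
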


In \autoref{sec:ext} we treat jumping kernels that do not satisfy the pointwise bounds \eqref{eq:Kcomp}. \autoref{thm:fHI_sharp-ass} provides the full parabolic Harnack inequality in a general setting. In \autoref{thm:fHI_singular} we establish a Harnack-type inequality for operators with a jumping measure that is not absolutely continuous with respect to the Lebesgue measure. Such a result seems to be new in the literature. 

Our proof of the full Harnack inequality \autoref{thm:fHI} is based on a local boundedness estimate for subsolutions (see \autoref{thm:locbdL1tail}) and a weak Harnack inequality for supersolutions (see \autoref{thm:improvedwHI-parabolic}), both involving suitable tail terms that are $L^1$ in time (see \eqref{eq:tail-def}, \eqref{eq:L1-tail-def}). When explaining the strategy of the proof below we comment on how both of these auxiliary results improve the existing results in the literature. As an application of \autoref{thm:locbdL1tail} and \autoref{thm:improvedwHI-parabolic}, we are able to establish parabolic Hölder regularity estimates for weak solutions $u$ to $\partial_t u - L_t u = 0$. 

Somewhat surprisingly, it turns out that the parabolic H\"older estimate fails to hold in general if one works under the same assumptions on the weak solution $u$ as in \autoref{thm:fHI}. One needs to impose additional assumptions on the tail term. Note that this phenomenon does not exist in the elliptic case, where no additional assumptions are required. We construct a counterexample (see \autoref{ex:counterexample}), stating that the parabolic H\"older estimate fails if the tail is only $L^1$ in time. 

\begin{theorem}[parabolic H\"older regularity]
	\label{thm:HRE}
	Assume that $K$ satisfies \eqref{eq:Kcomp}. Then, for every $R > 0$, $t_0 \in I$, $x_0 \in \Omega$ with $I_{4R}^{\ominus}(t_0) \times B_{4R}(x_0) \subset I \times \Omega$, and any solution $u$ to $\partial_t u - L_t u = 0$ in $I \times \Omega$ satisfying $\tail(u;R/2,x_0) \in L^{1+\eps}_{loc}(I)$ for some $\eps > 0$, it holds $u \in C^{0,\gamma}(I_{R}^{\ominus}(t_0) \times B_{R}(x_0))$ for some $\gamma = \gamma(d,\alpha_0,\lambda,\Lambda) \in (0,1)$, and
	\begin{align*}
		&R^{\gamma} \frac{|u(t,x) - u(s,y)|}{\left(|t-s|^{1/\alpha} + |x-y|\right)^{\gamma}} \\ &\quad \le c \left( \dashint_{I_{2R}^{\ominus}(t_0) \times B_{2R}(x_0)} \hspace{-0.8cm} u^2(t,x) \d x \d t \right)^{1/2} \hspace{-0.2cm} + c \left(\dashint_{I^{\ominus}_{2R}(t_0)} \hspace{-0.4cm}\tail(u(t);R/2,x_0)^{1+\eps} \d t \right)^{\frac{1}{1+\eps}}
	\end{align*}
	for every $(t,x), (s,y) \in I^{\ominus}_{R}(t_0) \times B_{R}(x_0)$.
	Here, $c = c(d,\alpha_0,\lambda,\Lambda) > 0$.
\end{theorem}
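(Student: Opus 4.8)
The plan is to obtain the H\"older estimate from a polynomial decay of the oscillation of $u$ on small parabolic cylinders, carried out around every point of $I_R^{\ominus}(t_0)\times B_R(x_0)$. Fix such a point $z_0=(\tau_0,\xi_0)$ and a small radius $r_0=c_*R$ with $c_*=c_*(\alpha_0)$; for $\rho\in(0,r_0]$ put $Q_\rho(z_0)=(\tau_0-\rho^{\alpha},\tau_0)\times B_\rho(\xi_0)$, $m_\rho=\essinf_{Q_\rho(z_0)}u$, $M_\rho=\esssup_{Q_\rho(z_0)}u$, $\omega(\rho)=M_\rho-m_\rho$, extended by $\omega(\rho)=\omega(r_0)$ for $\rho>r_0$. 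The assumption $I_{4R}^{\ominus}(t_0)\times B_{4R}(x_0)\subset I\times\Omega$ leaves room for all cylinders and tails below. The core of the argument is an oscillation decay estimate: there exist $\sigma,\theta\in(0,1)$ and $C>0$ depending only on $d,\alpha_0,\lambda,\Lambda$, a parameter $c_1$ that the scheme can afford to be small (at the expense of shrinking $\sigma$), and $\beta_0:=\tfrac{\alpha\eps}{1+\eps}$, such that
\[
\omega(\sigma\rho)\ \le\ (1-\theta)\,\omega(\rho)\ +\ c_1\sum_{j\ge1}2^{-j\alpha}\,\omega(2^{j}\rho)\ +\ C\Big(\tfrac{\rho}{R}\Big)^{\beta_0}\mathcal N \qquad\text{for }0<\rho\le r_0,
\]
where $\mathcal N$ is the right-hand side of the asserted estimate (with $\xi_0$ in place of $x_0$).

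To prove this I would apply the weak Harnack inequality \autoref{thm:improvedwHI-parabolic}, localized to $Q_\rho(z_0)$, to the two nonnegative supersolutions $w_1=u-m_\rho$ and $w_2=M_\rho-u$. Trivially, one of the sets $\{u\ge\tfrac12(M_\rho+m_\rho)\}$ and $\{u\le\tfrac12(M_\rho+m_\rho)\}$ fills at least half of a suitable past sub‑cylinder; in the first case $\big(\dashint w_1^{p_0}\big)^{1/p_0}\ge c_0\,\omega(\rho)$ there, so \autoref{thm:improvedwHI-parabolic} gives $c_0\,\omega(\rho)\le c_H\big(\essinf_{Q_{\sigma\rho}(z_0)}u-m_\rho\big)+c_H\,T_\rho$ with $T_\rho=\dashint\tail\big((w_1)_-(t);\,\cdot\,,\xi_0\big)\,\d t$; rearranging this lower bound for $u$ and subtracting from $M_{\sigma\rho}\le M_\rho$ yields $\omega(\sigma\rho)\le(1-\theta)\omega(\rho)+C\,T_\rho$, and the case of $w_2$ is symmetric. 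It remains to dominate $T_\rho$. Since $(w_1)_-=(m_\rho-u)_+$ vanishes on $Q_\rho(z_0)$ — in particular on $B_\rho(\xi_0)$ for the times entering $T_\rho$ — the tail of $(w_1)_-$ effectively runs over $\Rd\setminus B_\rho(\xi_0)$, which already produces a factor $\sigma^{\alpha}$ relative to scale $\rho$; one then splits $\Rd\setminus B_\rho(\xi_0)$ into the dyadic shells $B_{2^{j}\rho}(\xi_0)\setminus B_{2^{j-1}\rho}(\xi_0)$. On the $j$-th shell with $2^{j}\rho\le r_0$ one has $(m_\rho-u)_+\le m_\rho-m_{2^{j}\rho}\le\omega(2^{j}\rho)$, because there $u\ge m_{2^{j}\rho}$ at the relevant times; together with the size of the kernel on the shell this contributes the near term $\le c_1\sum_{j\ge1}2^{-j\alpha}\omega(2^{j}\rho)$ with $c_1=C\sigma^{\alpha}$. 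For $|\xi_0-y|\ge r_0$ one bounds $(m_\rho-u)_+\le|u|+\|u\|_{L^\infty(Q_{r_0})}$, which extracts a prefactor $(\rho/R)^{\alpha}$, and then — the crucial point — applies H\"older's inequality in time on the interval of length $\sim\rho^{\alpha}$ occurring in $T_\rho$:
\[
\dashint\tail\big(u(t);R/2,\xi_0\big)\,\d t\ \le\ C\Big(\tfrac{R}{\rho}\Big)^{\alpha/(1+\eps)}\Big(\dashint_{I^{\ominus}_{2R}(t_0)}\tail\big(u(t);R/2,\xi_0\big)^{1+\eps}\,\d t\Big)^{1/(1+\eps)}.
\]
The two powers of $\rho/R$ combine to $(\rho/R)^{\alpha-\alpha/(1+\eps)}=(\rho/R)^{\beta_0}$; bounding $\|u\|_{L^\infty(Q_{r_0})}\le C\mathcal N$ by \autoref{thm:locbdL1tail}, and comparing $\tail(u;R/2,\xi_0)$ with $\tail(u;R/2,x_0)$ plus a local $L^2$‑average of $u$ (splitting the tail integral at $B_{2R}(x_0)$), the far contribution is $\le C(\rho/R)^{\beta_0}\mathcal N$. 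Collecting the pieces gives the displayed estimate.

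A routine iteration lemma for non‑decreasing $\omega$ satisfying such an inequality — choose $\sigma$ (hence $c_1$) small enough that the near sum is absorbed by a fraction of $\theta$, then pick $\gamma\in(0,\beta_0]$ small and argue by induction over the scales $\sigma^{n}r_0$ — yields $\gamma=\gamma(d,\alpha_0,\lambda,\Lambda)\in(0,1)$ and $C$ with $\omega(\rho)\le C(\rho/r_0)^{\gamma}\big(\omega(r_0)+\mathcal N\big)\le C(\rho/R)^{\gamma}\mathcal N$ for $0<\rho\le r_0$, where I again absorb $\omega(r_0)\le 2\|u\|_{L^\infty(Q_{r_0})}\le C\mathcal N$ via \autoref{thm:locbdL1tail}. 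Since this holds with the same $\gamma,C$ and $\mathcal N$ at every $z_0\in I^{\ominus}_R(t_0)\times B_R(x_0)$, and any two points $(t,x),(s,y)$ of that set lie in a common cylinder $Q_\rho(z_0)$ with $\rho\simeq|t-s|^{1/\alpha}+|x-y|$, we obtain $|u(t,x)-u(s,y)|\le\osc_{Q_\rho(z_0)}u\le C(\rho/R)^{\gamma}\mathcal N$, which after rearranging is precisely the claimed bound (and shows $u$ has a continuous representative on $I^{\ominus}_R(t_0)\times B_R(x_0)$).

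The main obstacle is the bookkeeping of the tail term $T_\rho$. First, one must make sure that its near part involves only scales strictly larger than $\rho$ and comes with a coefficient that is harmless for the iteration lemma; this is exactly why the vanishing of $(w_1)_-$ on the \emph{entire} cylinder $Q_\rho(z_0)$ — not merely outside a tiny ball — has to be exploited, and why the sub‑scale $\sigma$ is introduced. Second, and more importantly, one must see that the far part genuinely decays in $\rho$, with rate $(\rho/R)^{\beta_0}$, $\beta_0=\tfrac{\alpha\eps}{1+\eps}>0$; this is the only place where the hypothesis $\eps>0$ enters, and it is responsible for all the loss of scales ($I^{\ominus}_R$ vs.\ $I^{\ominus}_{2R}$, $B_R$ vs.\ $B_{2R}$) in the final estimate. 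For $\eps=0$ the exponent $\beta_0$ degenerates to $0$, the far term no longer decays, the iteration does not close, and the H\"older estimate fails — this is exactly the content of the counterexample in \autoref{ex:counterexample}.
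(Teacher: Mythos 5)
Your proposal is correct and follows the same strategy as the paper: both obtain the H\"older estimate from an oscillation-decay iteration driven by the weak Harnack inequality with $L^1$-tail applied to the two shifted supersolutions, and both close the iteration by splitting the tail of the relevant negative part into an \emph{intermediate} piece (controlled recursively by the oscillation at previous scales) and a \emph{far} piece (controlled, via H\"older's inequality in time, by the $L^{1+\eps}$-tail with a decaying prefactor $(\rho/R)^{\alpha\eps/(1+\eps)}$ — the only place where $\eps>0$ enters). The paper formalizes this via an explicit growth lemma (\autoref{lemma:growth}) and an induction on barriers $M_j=-m_j+L\nu^{-\gamma j}$, estimating the intermediate tail by the already-assumed polynomial growth of $v$ as in Cozzi's elliptic proof; you phrase the same mechanism directly in terms of $\omega(\rho)$ with a dyadic-shell sum $c_1\sum_{j\ge 1}2^{-j\alpha}\omega(2^j\rho)$ and a standard iteration lemma. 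These are cosmetic differences in bookkeeping, not in the underlying ideas. Two small remarks on precision: your $w_1, w_2$ are not globally nonnegative, so one must invoke the tail-included version of the weak Harnack inequality (that is, the mechanism of \autoref{cor:fPHI} resp.\ \autoref{lemma:growth}, not the statement of \autoref{thm:improvedwHI-parabolic} verbatim), which you do implicitly by keeping $T_\rho$; and the claim that restricting the tail integral to $\R^d\setminus B_\rho(\xi_0)$ ``already produces a factor $\sigma^\alpha$'' deserves a line of justification matching whichever normalization of $\tail$ one adopts, since the $\tail$ of \eqref{eq:tail-def} carries no $R^\alpha$ prefactor. Neither point is a gap in the argument.
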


\medskip

\begin{remark}
	\begin{itemize}
		\item[(i)] To the best of our knowledge, even in the case $L_t = (-\Delta)^{\alpha/2}$, Hölder estimates have not been proved under the assumption on the tail of $u$  above.
		\item[(ii)] As the counterexample shows (see \autoref{ex:counterexample}), the condition on the tail in \autoref{thm:HRE} is sharp in the range of $L^p$-spaces. We believe that \autoref{thm:HRE} can be further refined to yield a continuity estimate for weak solutions whose tail is in a suitable Lorentz space contained in $L^{1}_{loc}(I)$.
		\item[(iii)] If $u$ belongs to the energy space $L^2_{loc}(I ; V^{\alpha/2}(\Omega | \R^n)) \cap L^{\infty}_{loc}(I;L^2_{loc}(\Omega))$, then the condition $\tail(u;R/2,x_0) \in L^{1+\eps}_{loc}(I)$ is automatically satisfied for every $\eps \leq 1$  (see \autoref{lemma:tail-finite}). See \autoref{sec:prelim} for the definition of the function spaces under consideration.

	\end{itemize}
\end{remark}

\begin{remark}
\label{rem:robust}
	An important feature of our approach is that the main results \autoref{thm:fHI} and \autoref{thm:HRE}, are robust in the following sense: All constants in the assertions remain bounded as $\alpha \nearrow 2$. Thus, our results extend the De Giorgi-Nash-Moser theory for local equations of the form
	\begin{align*}
		\partial_t u(t,x) - \dvg(A(t,x) \nabla u(t,x)) = 0.
	\end{align*}
in a continuous way. For a more detailed discussion, we refer to \cite{KaWe22a} and \cite{CKW23}.
\end{remark}	
		
\subsection*{Related results} 

The regularity theory for elliptic and parabolic equations governed by nonlocal operators modeled upon the fractional Laplacian has become a highly active field of research in the past 20 years. 

In particular, the De Giorgi-Nash-Moser theory has successfully been extended to integro-differential operators satisfying \eqref{eq:Kcomp}: Starting with \cite{Kas09}, \cite{DyKa20}, and \cite{DKP14}, \cite{DKP16}, \cite{Coz17}, H\"older regularity estimates and Harnack inequalities have been established for weak solutions to the corresponding elliptic equation by nonlocal versions of the Moser iteration and the De Giorgi iteration, respectively.  Moreover, the aforementioned results have been extended to more general classes of operators including operators with singular jumping measures (see \cite{KaSc14}, \cite{ChKa20}, \cite{CKW23}), nonlinear operators with nonstandard growth (see \cite{ChKi23}, \cite{BOS22}, \cite{CKW22a}, \cite{CKW22b}, \cite{BKO22}, \cite{Ok23}), and nonsymmetric operators (see \cite{KaWe22a}, \cite{KaWe22b}). For parabolic equations, weak Harnack inequalities, H\"older regularity estimates, and local boundedness estimates have been investigated in \cite{CCV11}, \cite{FeKa13}, \cite{KaWe22a}, \cite{KaWe22b}, \cite{Str19a}, \cite{Str19b}, \cite{Kim20}, \cite{Lia24}, \cite{DZZ21}, \cite{APT22}. 

A parabolic Harnack inequality for nonlocal operators under \eqref{eq:Kcomp} has been established for the first time in \cite{BaLe02}, \cite{ChKu03}. In both articles the authors make substantial use of the associated stochastic jump process. In the significant contributions \cite{CKW20}, \cite{CKW21} the authors have established parabolic Harnack inequalities for nonlocal Dirichlet forms on doubling metric measure spaces. In particular, they are able to characterize these inequalities in terms of equivalent conditions on the geometry of the underlying space and suitable assumptions on the jumping kernel. As in \cite{BaLe02}, \cite{ChKu03} the approach is based on the stochastic process and uses heat kernel estimates in order to deduce the parabolic Harnack inequality. Note that \cite{CKW20}, \cite{CKW21} work under assumptions much weaker than \eqref{eq:Kcomp}, which we assume for clarity of the presentation (see \autoref{rem:discussion-full-harnack} (vi)). For related results on heat kernel estimates on metric measure spaces, see also \cite{GHKS14}, \cite{GHKS15}, \cite{GHH18}. Pointwise estimates are important both theoretically and from the point of view of applications, e.g. in results such as in \cite{BFV18}.

Finally, we point out that for elliptic and parabolic equations governed by operators in non-variational form,  H\"older regularity estimates and the Harnack inequality are proved in several works including \cite{BaKa05}, \cite{Sil06}, \cite{CaSi09}, \cite{Sil11}, \cite{KiLe12}, \cite{ChD12}, \cite{KiLe13a}, \cite{KiLe13b}, \cite{ChD14}, \cite{ChDa16}, \cite{ScSi16}, \cite{DRSV22}.

\subsection*{Strategy of proof}

As in the classical proof by Moser (see \cite{Mos64}, \cite{Mos71}), the Harnack inequality for nonlocal equations is a combination of the weak Harnack inequality for supersolutions and the local boundedness estimate for subsolutions. A major difference between nonlocal and local equations arises from the need to take into account long range interactions, which enter the respective estimates through tail terms defined as
\begin{align*}
\tail(v;R,x_0) = (2-\alpha) \int_{\R^d \setminus B_R(x_0)} |v(y)||x_0 - y|^{-d-\alpha} \d y.
\end{align*}
For instance, in \cite{DKP16} and \cite{Coz17}, it was proved that subsolutions to the elliptic equation $-L u = 0$ in $B_{2R}(x_0)$ satisfy the following local boundedness estimate:
\begin{align}
\label{eq:intro_lb}
\sup_{B_{R/2}(x_0)} u_+ \le c(\delta)\left(\dashint_{B_{R}(x_0)} u_+(x) \d x \right) + \delta \tail(u_+;R,x_0).
\end{align}
In order to establish a full Harnack inequality for globally nonnegative solutions, they made use of the interpolation between the local and nonlocal contributions which is kept in the parameter $\delta > 0$ in \eqref{eq:intro_lb} and of the fact that
\begin{align}
\label{eq:intro_tail-est}
\tail(u_+;R,x_0) \le c \sup_{B_{3R/2}(x_0)} u
\end{align}
for nonnegative supersolutions. In fact, by a covering argument and choosing $\delta > 0$ small enough, \eqref{eq:intro_tail-est} makes it possible to absorb the tail term in \eqref{eq:intro_lb} on the left hand side and thereby to prove the full Harnack inequality without any tail terms after combination with the weak Harnack inequality (see \cite{Kas09}, \cite{DKP14}, \cite{Coz17}, \cite{DyKa20}), which reads as
\begin{align*}
\left(\dashint_{B_{R}(x_0)} u(x) \d x \right) \le c \inf_{B_{R/2}(x_0)} u.
\end{align*}

In the parabolic case, the current state of the art are the following parabolic versions of the local boundedness estimate for subsolutions and the weak Harnack inequality for nonnegative supersolutions (e.g. in \cite{FeKa13}, \cite{Kim19}, \cite{Str19b}, \cite{Kim20}, \cite{DZZ21}, \cite{BGK21}, \cite{GaKi21}, \cite{Nak22}, \cite{FSZ22}, \cite{Lia24}, \cite{APT22}, \cite{KaWe22a}, \cite{KaWe22b}, \cite{KaWe23}) to the equation $\partial_t u - L_t u = 0$:
\begin{align}
\sup_{I^{\ominus}_{R/2}(t_0) \times B_{R/2}(x_0)} u_+  &\le c(\delta) \left(\dashint_{I^{\ominus}_R(t_0) \times B_R(x_0)} u_+(t,x) \d x \d t \right) \nonumber \\
& \quad + \delta \sup_{t \in I^{\ominus}_R(t_0)} \tail(u_+(t);R/2,x_0), \label{eq:intro_parlb} \\
\label{eq:intro_parwH}
\inf_{I_{R}^{\oplus}(t_0) \times B_{R}(x_0)} u &\ge c\dashint_{I_{R}^{\ominus}(t_0 - R^{\alpha}) \times B_{R}(x_0)} u(t,x) \d x \d t.
\end{align} 

These two estimates cannot be combined in order to deduce a full Harnack inequality. This is due to the appearance of the so-called $L^\infty$-tail in \eqref{eq:intro_parlb}
\begin{align}\label{eq:Linfty-tail-def}
\sup_{t \in I^{\ominus}_R(t_0)} \tail(u_+(t);R,x_0).
\end{align}

Although the $L^\infty$-tail seems to appear automatically when naively adapting the nonlocal De Giorgi and Moser iteration to the parabolic case, it is not naturally associated to the parabolic equation. In fact, the $L^\infty$-tail is not necessarily finite for weak solutions in the natural energy space $u \in L^2_{loc}(I ; V^{\alpha/2}(\Omega | \R^d))$. See also the discussion in \autoref{sec:prelim}.  Moreover, (or rather as a consequence) it does not seem to be controllable by a local quantity such as in the elliptic case (see \eqref{eq:intro_tail-est}). 

The main contribution of this article is to improve both, the local boundedness estimate for subsolutions \eqref{eq:intro_parlb} and the weak Harnack inequality for supersolutions \eqref{eq:intro_parwH} in such a way that they imply the full Harnack inequality \autoref{thm:fHI}. A central object in this article is the $L^1$-tail 
\begin{align}
\label{eq:L1-tail-def}
\int_{I^{\ominus}_R(t_0)} \tail(u(t);R,x_0) \d t,
\end{align}
which is a natural quantity as opposed to the $L^\infty$-tail because it is finite for any function in the natural weak solution space (see \autoref{def:col-concept}). Moreover, it can be estimated by the supremum of the solution in a ball in analogy to \eqref{eq:intro_tail-est}. 

\subsubsection*{Local boundedness} First, we establish an improved version of the local boundedness estimate for subsolutions \eqref{eq:intro_lb}, which involves the $L^1$-tail instead of the $L^\infty$-tail:

\begin{theorem}[local boundedness with $L^1$-tail]
\label{thm:locbdL1tail}
Assume that $K$ satisfies \eqref{eq:Kcomp}. Then, for every $R > 0$, $t_0 \in I$, $x_0 \in \Omega$ with $I_{4R}(t_0) \times B_{4R}(x_0) \subset I \times \Omega$ and any subsolution $u$ to $\partial_t u - L_t u = 0$ in $I \times \Omega$
\begin{align}
\label{eq:locbdL1tail}
\begin{split}
\sup_{I^{\ominus}_{R/2}(t_0) \times B_{R/2}(x_0)} u_+ &\le c \left(\dashint_{I^{\ominus}_R(t_0) \times B_R(x_0)} u_+^2(t,x) \d x \d t \right)^{1/2} \\
&\quad + c\dashint_{I^{\ominus}_R(t_0)} \tail(u_+(t);R,x_0) \d t.
\end{split}
\end{align}
Here, $c = c(d,\alpha_0,\lambda,\Lambda) > 0$ is a constant. In particular, since all terms on the right hand side are finite, $u_+ \in L^{\infty}(I^{\ominus}_{R/2}(t_0) \times B_{R/2}(x_0))$.
\end{theorem}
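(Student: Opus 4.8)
The plan is to run a nonlocal parabolic De Giorgi iteration over shrinking space-time cylinders, but to book-keep the long-range interaction through the $L^1$-in-time tail rather than through its $L^\infty$ counterpart. Fix the center $(t_0,x_0)$ and, after translation and scaling, work with cylinders $Q_j = I^{\ominus}_{R_j}(t_0)\times B_{R_j}(x_0)$ where $R_j = (1/2 + 2^{-j-1})R$, so that $R_0 = R$ and $R_j \searrow R/2$. For a level $k\ge 0$ consider the truncated function $w_k = (u - k)_+$ and test the weak formulation of $\partial_t u - L_t u = 0$ against (a cutoff times) $w_k$ on $Q_j$. The standard nonlocal energy estimate (Caccioppoli inequality for subsolutions) then controls, on the inner cylinder $Q_{j+1}$, the quantity
\[
\sup_{t\in I^{\ominus}_{R_{j+1}}}\int_{B_{R_{j+1}}} w_k^2(t)\,\d x \;+\; \int_{I^{\ominus}_{R_{j+1}}}\!\!\!\big[w_k(t)\big]_{V^{\alpha/2}(B_{R_{j+1}})}^2\,\d t
\]
by a local term of the form $(R_j - R_{j+1})^{-\alpha}\iint_{Q_j} w_k^2$ plus a tail contribution. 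The crucial point is the tail contribution: when one estimates $\int_{\R^d\setminus B_{R_j}} w_k(t,y)|x-y|^{-d-\alpha}\d y$ one may pull out $\sup_{B_{R_j}} w_k(t)$ or, better, simply keep $w_k(t,y)\le u_+(t,y)$ on the exterior, so the tail term is bounded by $\int_{I^{\ominus}_{R_j}} \tail(u_+(t);R_j,x_0)\,\d t$ times a negative power of $(R_j-R_{j+1})$, and it multiplies only the measure of the set $\{w_k>0\}$, which is what makes the iteration close. I would absorb any term of the shape $\tail\cdot\sup w_k$ into the left side using Young's inequality, paying only a constant; the genuinely nonremovable piece is the one proportional to the level $k$, which is fine because the final estimate is allowed to carry a constant multiple of the $L^1$-tail.

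Next comes the nonlinear feedback step. From the energy bound, the parabolic Sobolev (Nash–Gagliardo–Nirenberg) inequality in the nonlocal setting gives an $L^{2+\kappa}$-gain: there is $\kappa = \kappa(d,\alpha_0) > 0$, robust as $\alpha\nearrow 2$, with
\[
\iint_{Q_{j+1}} w_k^{2+\kappa} \;\le\; c\Big(\sup_t \int_{B} w_k^2\Big)^{\kappa/2}\!\!\int_{I}\!\big[w_k(t)\big]_{V^{\alpha/2}}^2\,\d t .
\]
Combining this with the Caccioppoli estimate and choosing an increasing sequence of levels $k_j = M(1-2^{-j})$, where $M$ is a constant to be fixed, one arrives at a recursion
\[
A_{j+1}\;\le\; c\, b^{j}\, M^{-\kappa}\, A_j^{1+\kappa/2} \;+\; (\text{tail terms}),
\qquad A_j := \dashint_{Q_j} w_{k_j}^2,
\]
for some $b > 1$. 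The standard fast-geometric-convergence lemma then yields $A_\infty = 0$, i.e. $\sup_{Q_\infty} u \le M$, provided $M$ dominates a fixed multiple of $A_0^{1/2} = \big(\dashint_{Q_0} u_+^2\big)^{1/2}$ together with the accumulated tail contribution $\dashint_{I^{\ominus}_R}\tail(u_+(t);R,x_0)\,\d t$. This is exactly the claimed estimate \eqref{eq:locbdL1tail}; the reduction of the prefactor on the $L^2$-average from its naive value to a universal constant is obtained, as usual, by a final interpolation / covering argument between concentric cylinders, exploiting that the iteration constant in front of $A_0^{1/2}$ can be traded against a small power of $A_0$ itself.

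The main obstacle, and the place where this argument departs from the textbook one, is controlling the tail inside the iteration \emph{uniformly in the truncation level $k$} while only ever using the $L^1$-in-time norm. In the classical scheme one estimates the exterior integral of $w_k$ by $\sup_t\tail(u_+(t);\cdot)$, which immediately produces the $L^\infty$-tail of \eqref{eq:intro_parlb}; to avoid this one must instead integrate the tail in time \emph{before} applying Young's inequality, which forces a careful ordering of the estimates: the time-integrated energy norm must be brought out first, and the exterior contribution must be split into the part supported where $u_+\le k$ (which is harmless, bounded by $k$ times a tail that no longer depends on $u$ on the bad set) and the part where $u_+ > k$ (whose measure decays along the iteration). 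Getting the powers of $(R_j - R_{j+1})$ and of $M$ to balance so that both the local and the tail terms feed a convergent recursion — and so that the tail term survives the iteration only as a single additive $c\,\dashint \tail$ rather than an exponentially growing sum — is the technical heart of the proof. Everything else (the Caccioppoli estimate, the parabolic Sobolev inequality, the iteration lemma) is by now standard and robust in $\alpha$.
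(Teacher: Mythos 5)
You correctly identify the central obstacle---the exterior integral must be book-kept through an $L^1$-in-time quantity, uniformly over the truncation level $k$---but the mechanism you propose to overcome it does not close, and it differs in an essential way from what the paper actually does.

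Your plan is to split the exterior contribution by level, into the set where $u_+\le k$ (``harmless'') and the set where $u_+>k$ (``whose measure decays along the iteration''). This cannot work: on $\{u_+\le k\}$ the truncation $w_k=(u-k)_+$ vanishes, so there is nothing to bound there, and on the complementary set $\{u_+>k\}\cap(\R^d\setminus B_{R_j})$ you have \emph{no} level-set measure decay, because the equation holds only in $\Omega$ and the iteration produces decay only inside the ball. The exterior tail is a fixed function of $t$ over which the iteration has no leverage. Consequently, after testing you are stuck with a term of the form $\int_t \Vert w_{k_j}(t)\Vert_{L^1(B_{R_j})}\,\tail(u_+(t);R_j,x_0)\,\d t$. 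Estimating $\Vert w_{k_j}(t)\Vert_{L^1}$ by $\sup_t\Vert w_{k_j}(t)\Vert_{L^1(B_{R_j})}$ and then Young does not let you absorb onto the left, because the supremum you would need is on the \emph{outer} ball $B_{R_j}$, whereas the Caccioppoli estimate only produces a supremum over the inner ball $B_{R_{j+1}}$. Even if one resolves this by a hole-filling iteration, the tail then enters the nonlinear recursion $A_{j+1}\le cb^jM^{-\kappa}A_j^{1+\kappa/2}+(\text{tail})$ as an additive term that does not vanish as $j\to\infty$, so the fast-geometric-convergence lemma gives $A_\infty=0$ only if $M$ also dominates a $j$-dependent tail contribution, which is not available.

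The paper resolves this with a \emph{spatial}, not a level-based, decomposition of the tail, together with one auxiliary lemma you do not have. Split $\int_{\R^d\setminus B_{r+\rho}}u_+(s,y)K(s;x,y)\,\d y$ into the intermediate tail over $B_R\setminus B_{r+\rho}$ and the remaining tail over $\R^d\setminus B_R$. The intermediate tail is controlled by $\rho^{-d-\alpha}\int_{B_R}u_+(s,y)\,\d y$, a quantity that lives where the equation holds; a separate Caccioppoli plus Young plus Giaquinta--Giusti argument (the paper's \autoref{lemma:intermediate-tail}) shows $\sup_{t\in I^\ominus_{R/2}}\dashint_{B_{R/2}}u_+^2(t)$ is bounded by the local $L^2$-average plus the $L^1$-in-time tail, which is exactly the missing ingredient your iteration would need. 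The remaining tail is where the $L^1$-in-time norm must appear, and it cannot be absorbed additively in the recursion; the paper offers two ways out. Either one localizes by writing $\eta u$ and treating $L_t((1-\eta)u)$ as an $L^1_tL^\infty_x$ source term (applying the standard local boundedness with $L^\infty$-tail to $\eta u$, whose tail is purely intermediate), or one subtracts the accumulated tail from the truncation level by defining $w_l(t,x)=\bigl(u(t,x)-CR^{-\alpha}\int_{t_0-(r+\rho)^\alpha}^{t}\tail(u_+(s);R,x_0)\,\d s-l\bigr)_+$, which produces an extra good term in the weak formulation that cancels the remaining tail exactly. Your proposal contains neither the spatial decomposition, nor the intermediate-tail lemma, nor a cancellation or localization device, so as written it does not yield \eqref{eq:locbdL1tail}.
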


An immediate consequence of \autoref{thm:locbdL1tail} is that weak solutions  $u$ to $\partial_t u - L_t u = 0$ are locally bounded. This property has not been established yet without further assumptions.

So far, for nonlocal operators in the framework of this article, \eqref{eq:locbdL1tail} has been proved in \cite{Str19b} for global solutions. The technique therein cannot be extended to allow for local solutions. However, for nonlocal operators in non-variational form, \eqref{eq:locbdL1tail} can be found for instance in \cite{ChDa16}. 

Our proof of \autoref{thm:locbdL1tail} is based on a decomposition of the tail into two parts. The first part takes care of the values of the solution inside the solution domain and can be treated by using the solution property itself. The second part contains information about $u$ away from the singularity of the kernel and can therefore be controlled much easier. For more details, we refer the reader to \autoref{sec:lb_first-proof} and \autoref{sec:lb_second-proof}, where we give two different proofs of \autoref{thm:locbdL1tail}.

An important observation when comparing \eqref{eq:locbdL1tail} to its elliptic counterpart \eqref{eq:intro_lb} or to \eqref{eq:intro_parlb} is that \eqref{eq:locbdL1tail} does not contain an interpolation parameter in front of the tail term. In fact, an interpolation between the $L^1$-tail and the average of $u$ would be unnatural since the $L^1$-tail is critical with respect to scaling. This is another difficulty of the parabolic case when compared to the elliptic case. As a consequence, \autoref{thm:locbdL1tail} is not good enough to imply the full Harnack inequality without an improvement of the weak Harnack inequality \eqref{eq:intro_parwH}.

\subsubsection*{Weak Harnack inequality} As explained above, the weak Harnack inequality \eqref{eq:intro_parwH} has to be improved in order to be combined with \autoref{thm:locbdL1tail}. We establish a new weak Harnack inequality for supersolutions, which contains an upper bound for the $L^1$-tail of $u$ by its infimum in a ball.

\begin{theorem}[weak Harnack inequality with $L^1$-tail]
\label{thm:improvedwHI-parabolic}
Assume that $K$ satisfies \eqref{eq:Kcomp}. Then, for every $R > 0$, $t_0 \in I$, $x_0 \in \Omega$ with $I_{4R}(t_0) \times B_{4R}(x_0) \subset I \times \Omega$, and any globally nonnegative supersolution $u$ to $\partial_t u-L_t u=0$ in $I \times \Omega$ it holds
\begin{align}
\label{eq:improvedwHI}
\dashint_{I^{\ominus}_{R}(t_0 - R^{\alpha}) \times B_R(x_0)
} \hspace{-0.8cm} u(t,x) \d x \d t + \dashint_{I^{\ominus}_{R}(t_0 - R^{\alpha})} \hspace{-0.6cm} \tail(u(t);R,x_0) \d t \le c \inf_{I_{R}^{\oplus}(t_0) \times B_{R}(x_0)} u. 
\end{align}
Here, $c = c(d,\alpha_0,\lambda,\Lambda) > 0$ is a constant.
\end{theorem}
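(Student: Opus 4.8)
The plan is to split the left‑hand side of \eqref{eq:improvedwHI}. The spatial–temporal average of $u$ is bounded by $c\inf_{I_R^{\oplus}(t_0)\times B_R(x_0)}u$ directly by the standard parabolic weak Harnack inequality \eqref{eq:intro_parwH}, so the new point is the bound for the $L^1$-tail. After the parabolic rescaling $(t,x)\mapsto(t_0+R^{\alpha}t,x_0+Rx)$, under which \eqref{eq:Kcomp} is preserved with the same constants $\lambda,\Lambda$, we may take $R=1$, $x_0=0$, and since $|I_1^{\ominus}(t_0-1)|=1$ it remains to show
\[
\int_{I_1^{\ominus}(t_0-1)}\tail(u(t);1,0)\,\d t\;\le\;c\inf_{I_1^{\oplus}(t_0)\times B_1(0)}u .
\]
The mechanism is monotonicity in time: since $u\ge 0$ on all of $\R^d$ and $u$ is a supersolution, large values of $u$ far from $0$ enter $-L_t u$ near $0$ with a definite negative sign, so $\partial_t u$ is pushed up near $0$; hence a large tail at the early times $I_1^{\ominus}(t_0-1)$ forces $u$ to be large near $0$ at the later times $I_1^{\oplus}(t_0)$.

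\textbf{Testing the equation.} To quantify this I would test the weak supersolution inequality
\[
-\int_I\!\int_{\R^d} u\,\partial_t\varphi\,\d x\,\d t+\int_I\cE_t(u,\varphi)\,\d t\;\ge\;0
\]
(with $\cE_t(v,\varphi)=\tfrac12\iint_{\R^d\times\R^d}(v(x)-v(y))(\varphi(x)-\varphi(y))K(t;x,y)\,\d x\,\d y$ the bilinear form associated with $L_t$) against $\varphi(t,x)=\eta(t)\psi(x)$, where $\psi\ge 0$ is a spatial cut-off with $\psi\equiv1$ near $0$ and $\supp\psi\subset B_{1/2}(0)$, and $\eta\ge 0$ is a temporal cut-off with $\eta\equiv1$ on $I_1^{\ominus}(t_0-1)$ and support a slightly larger interval on which $u$ is still a supersolution. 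Writing $\cE_t(u,\psi)=\int_{\R^d}u(x)(-L_t\psi)(x)\,\d x$ and splitting at $\partial B_1(0)$: on $\R^d\setminus B_1(0)$ one has $(-L_t\psi)(x)=-\int_{B_{1/2}(0)}\psi(y)K(t;x,y)\,\d y\le -c(2-\alpha)|x|^{-d-\alpha}$ by the lower bound in \eqref{eq:Kcomp} together with $|x-y|\asymp|x|$ for $|y|\le\tfrac12<1\le|x|$, so, using $u\ge0$, this part contributes at most $-c\,\tail(u(t);1,0)$; on $B_1(0)$ the form equals $\tfrac12\iint_{B_1(0)\times B_1(0)}(u(x)-u(y))(\psi(x)-\psi(y))K$ plus an elementary term $\le c\int_{B_{1/2}(0)}u(t,x)\,\d x$, and the bilinear piece is estimated via Cauchy–Schwarz together with a Caccioppoli-type energy estimate for the nonnegative supersolution $u$, all constants remaining bounded as $\alpha\nearrow2$ thanks to the factor $(2-\alpha)$ in \eqref{eq:Kcomp}. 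Altogether $\cE_t(u,\psi)\le -c\,\tail(u(t);1,0)+c\,\Phi(t)$, with $\Phi(t)$ a local average of $u$ over a fixed ball around $0$.

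\textbf{Conclusion.} Inserting this into the tested inequality and using $u,\psi\ge0$ to discard the favourably signed contribution coming from the increasing part of $\eta$, one arrives at
\[
\int_{I_1^{\ominus}(t_0-1)}\tail(u(t);1,0)\,\d t\;\le\;c\int_{J}\Phi(t)\,\d t\;+\;c\int_{B_{1/2}(0)}u(s_0,x)\,\d x
\]
for a bounded time interval $J\ni s_0$ with $J\subset(-\infty,t_0]$ shifted by at most $O(1)$ from $I_1^{\ominus}(t_0-1)$, i.e.\ the tail integral is dominated by local quantities of $u$ at times $\le t_0$ on balls of radius $\le 1$. Each such quantity is then bounded by $c\inf_{I_1^{\oplus}(t_0)\times B_1(0)}u$ by the standard parabolic weak Harnack inequality \eqref{eq:intro_parwH} (applied at comparable scales, together with a covering argument inside the region on which $u$ is a supersolution); combining this with the bound for the first term gives \eqref{eq:improvedwHI}. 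The variant of \autoref{cor:fPHI} with $\dashint_{I_{4R}(t_0)}\tail(u_-(t);4R,x_0)\,\d t$ added on the right follows from the same computation once the negative part of $u$ outside the solution domain is retained, being the only term entering the long-range estimate with the wrong sign.

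\textbf{Main obstacle.} The hard part is twofold. First, bounding the near-diagonal part of $\cE_t(u,\psi)$ by a local average of $u$ \emph{uniformly in $\alpha\in[\alpha_0,2)$}: for merely measurable kernels $L_t\psi$ need not be bounded, so one cannot integrate $u$ against $L_t\psi$ pointwise and must keep the bilinear form and invoke a Caccioppoli inequality for supersolutions — in which the long-range interaction reappears as an $L^1$-in-time tail that must itself be controlled. Second, the time bookkeeping: the standard weak Harnack relates the average of $u$ over $I_R^{\ominus}(t_0-R^{\alpha})\times B_R(x_0)$ to $\inf_{I_R^{\oplus}(t_0)\times B_R(x_0)}u$ but says nothing about averages over the intermediate slab $(t_0-R^{\alpha},t_0)$, so the temporal cut-off $\eta$ and the covering must be arranged with care, intertwining the tail estimate with the intermediate positivity-propagation steps rather than treating the weak Harnack purely as a black box.
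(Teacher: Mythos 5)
Your reduction is right: after invoking the known weak Harnack \eqref{eq:intro_parwH} for the average, the new content is bounding the $L^1$-tail, and your estimate for the \emph{off-diagonal} part of $\cE^{K(t)}(u,\psi)$ (producing a good term $-c\,\tail(u(t);1,0)$ plus a local average) is correct. The genuine gap is exactly where you flag it, in the near-diagonal part of $\cE^{K(t)}(u,\psi)$, and the workaround you sketch does not close it.

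With $\psi$ a fixed spatial cutoff, the near-diagonal contribution $\iint_{B_1\times B_1}(u(x)-u(y))(\psi(x)-\psi(y))K(t;x,y)\,\d x\,\d y$ has no sign and, for merely measurable $K$ and $\alpha$ close to $2$, cannot be bounded by a local average of $u$: the pointwise bound $|\psi(x)-\psi(y)|\le c|x-y|$ gives an integrand $\sim (2-\alpha)|x-y|^{1-d-\alpha}|u|$ which is non-integrable near the diagonal once $\alpha\ge 1$, and the usual second-order Taylor cancellation requires pointwise control of $L_t\psi$, which is unavailable here. Falling back on Cauchy--Schwarz plus a Caccioppoli estimate for supersolutions, as you suggest, reintroduces the energy of $u$ and hence an $L^1$-in-time tail on the right-hand side, i.e.\ precisely the quantity you are trying to bound; this is circular, and you acknowledge it without resolving it. So the claimed inequality $\cE_t(u,\psi)\le -c\,\tail(u(t);1,0)+c\,\Phi(t)$, with $\Phi$ a purely local average, is not established and is the crux of the missing argument.

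The paper avoids this entirely by testing with the negative-power test function $\phi=-\tau^2\U^{-p}$, $\U=u+\eps$, $p\in(0,1)$, rather than with $\psi$ itself. The key point is that the near-diagonal part $\cE^{K(t)}_{B_R\times B_R}(u,-\tau^2\U^{-p})$ is bounded \emph{from below} by $c_1\,\cE^{K(t)}_{B_R}(\tau\U^{(1-p)/2},\tau\U^{(1-p)/2})-c_2R^{-\alpha}\Vert\U^{1-p}\Vert_{L^1(B_R)}$ via an algebraic chain-rule inequality (see (3.8) in \cite{KaWe22a} or (3.4) in \cite{FeKa13}); since the energy of $\tau\U^{(1-p)/2}$ is nonnegative it can simply be discarded, so the energy of $u$ never has to be estimated and the tail does not reappear. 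The off-diagonal part then yields $\left(\int_{B_{R/2}}\U^{-p}\right)R^{-\alpha}\tail(u(t);R,x_0)$, and after time integration one passes to $\dashint\tail\le c\,(\dashint\U^p)(\dashint\U^{1-p})$ via Jensen's inequality and the Moser $\sup$-estimate for small positive powers, finishing with the weak Harnack applied separately to the exponents $p$ and $1-p$. This interplay between $\U^{-p}$ and $\U^{1-p}$ is what makes the proof go through; a linear test function $\psi$ cannot reproduce it. So your proposal identifies the correct reduction and the correct obstacle but is missing the decisive idea.
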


Note that \autoref{thm:improvedwHI-parabolic} is already new in the elliptic case for operators in the framework of this article. So far, the best known estimate for the tail of a supersolution is given in \eqref{eq:intro_tail-est}. However, \autoref{thm:improvedwHI-parabolic} implies that the tail of a supersolution $u$ can even by bounded by its infimum in a ball. For the proof of \autoref{thm:improvedwHI-parabolic}, we refer to \autoref{sec:whi}. For operators in non-variational form an estimate similar to \eqref{eq:improvedwHI} can be found for instance in \cite{ChDa16}. 

Having at hand the local boundedness estimate with $L^1$-tail and the weak Harnack inequality with $L^1$-tail, our main results (see \autoref{thm:fHI}, \autoref{cor:fPHI}, and \autoref{thm:HRE}) follow by standard arguments. For more details on the proofs, we refer to \autoref{sec:main-proofs}.

\begin{remark}[Next steps]
It is an interesting task to extend our approach. It would be desirable to cover certain nonlinear equations involving, e.g.
\begin{itemize}
\item the heat equation governed by the fractional $p$-Laplacian, thus improving \cite{DZZ21}, \cite{APT22}, \cite{Lia24} making use of Di Benedetto's intrinsic scaling method,
\item the doubly nonlinear nonlocal parabolic equation (Trudinger's equation), cf. \cite{BGK21},
\item mixed local-nonlocal parabolic $p$-Laplace equations, see also \cite{GaKi21}, \cite{GaKi23}, \cite{FSZ22}, \cite{Nak22}, \cite{Nak23}, \cite{ShZh23}.
\end{itemize}

Moreover, it is challenging to investigate a possible generalization of our approach to kinetic equations, such as the fractional Kolmogorov equation (see \cite{ImSi19}, \cite{Loh24}). \\
Last, let us mention an interesting detail. For partial differential operators of second order, due to the shape of the fundamental solution, the time-lag in the parabolic Harnack inequality is known to be inevitable. This phenomenon is different for nonlocal operators of fractional order (see \cite{BSV17}, \cite{DKSZ20}). It would be interesting to study the parabolic Harnack inequality without the time-lag between $I^{\ominus}_{R}(t_0 - R^{\alpha})$ and $I_{R}^{\oplus}(t_0)$.
\end{remark}

\textbf{Acknowledgement:} The authors thank Luis Silvestre for helpful discussions concerning \autoref{lemma:locbdsuptail} in the case $\mu=1, \theta = \infty$.

\subsection*{Outline}
This article consists of six sections. In \autoref{sec:prelim}, we introduce the weak solution concept and list several auxiliary results. In \autoref{sec:lb} and \autoref{sec:whi}, we establish the local boundedness estimate with $L^1$-tail (see \autoref{thm:locbdL1tail}) and the weak Harnack inequality with $L^1$-tail (see \autoref{thm:improvedwHI-parabolic}). Our main results (see \autoref{thm:fHI}, \autoref{cor:fPHI}, and \autoref{thm:HRE}) are proved in \autoref{sec:main-proofs}. Finally, in \autoref{sec:ext}, we discuss possible extensions of our main results to a more general class of jumping kernels. 

\section{Preliminaries}
\label{sec:prelim}

In this section we introduce the weak solution concept and suitable function spaces. Moreover, we list some auxiliary results.

First, we introduce the following function spaces:
Given an open set $\Omega \subset \R^d$, we define:
\begin{align*}
V^{\alpha/2}(\Omega|\R^d) &= \Big\lbrace v : v \hspace{-0.1cm}\mid_{\Omega} \in L^2(\Omega),\\
& \qquad \qquad [v]_{V^{\alpha/2}(\Omega|\R^d)}^2 := (2-\alpha) \int_{\Omega}\int_{\R^d}\frac{(v(x)-v(y))^2}{|x-y|^{d+\alpha}} \d y \d x < \infty \Big\rbrace,\\
H^{\alpha/2}_{\Omega}(\R^d) &= \left\lbrace v \in V^{\alpha/2}(\R^d|\R^d) : \supp(v) \subset \Omega \right\rbrace,\\
H^{\alpha/2}(\Omega) &= \Big\lbrace v \in L^2(\Omega) : [v]^2_{H^{\alpha/2}(\Omega)} := (2-\alpha) \int_{\Omega}\int_{\Omega} \frac{(v(x)-v(y))^2}{|x-y|^{d+\alpha}} \d y \d x < \infty \Big\rbrace,\\
L^1_{\alpha}(\R^d) &= \Big\lbrace v : \Vert v \Vert_{L^1_{\alpha}(\R^d)} := \int_{\R^d} |v(x)| (1+ |x|)^{-d-\alpha} \d x < \infty \Big\rbrace,
\end{align*}
equipped with
\begin{align*}
\Vert v \Vert_{V^{\alpha/2}(\Omega|\R^d)}^2 &= \Vert v \Vert_{L^2(\Omega)}^2 + [v]_{V^{\alpha/2}(\Omega|\R^d)}^2,\\
\Vert v \Vert_{H^{\alpha/2}_{\Omega}(\R^d)}^2 &= \Vert v \Vert_{L^2(\R^d)}^2 + [v]_{V^{\alpha/2}(\R^d|\R^d)}^2,\\
\Vert v \Vert_{H^{\alpha/2}(\Omega)}^2 &= \Vert v \Vert_{L^2(\Omega)}^2 + [v]^2_{H^{\alpha/2}(\Omega)}.
\end{align*}

Note that $V^{\alpha/2}(\R^d | \R^d) = H^{\alpha/2}(\R^d)$. We will use both terms synonymously.

Moreover, given a jumping kernel $K$, we associate it with the bilinear forms
\begin{align*}
\cE^{K(t)}(u,v) = \int_{\R^d}\int_{\R^d} (u(x) - u(y))(v(x) - v(y)) K(t;x,y) \d y \d x, ~~ t > 0.
\end{align*}
Note that $(L_t u , v)_{L^2(\R^d)} = \cE^{K(t)}(u,v)$. Given a set $M \subset \R^d \times \R^d$, we introduce the notation
\begin{align*}
\cE^{K(t)}_M(u,v) = \iint_{M} (u(x) - u(y))(v(x) - v(y)) K(t;x,y) \d y \d x.
\end{align*}
If $B \subset \R^d$, we also write $\cE^{K(t)}_B(u,v) := \cE^{K(t)}_{B \times B}(u,v)$.

The space $V^{\alpha/2}(\Omega | \R^d)$ can be regarded as the natural space to analyze weak solutions to the elliptic problem $-L u = 0$ in $\Omega$. In fact, this PDE is the Euler-Lagrange equation of the minimization problem 
\begin{align*}
u \mapsto \cE^{K}_{(\R^d \times \R^d) \setminus (\Omega^c \times \Omega^c)}(u,u),
\end{align*}
and clearly, due to \eqref{eq:Kcomp}, the energy is finite if and only if $u \in V^{\alpha/2}(\Omega | \R^d)$. Therefore, the natural energy space to analyze weak solutions to the parabolic problem $\partial_t u - L_t u = 0$ in $I \times \Omega$ is $L^2_{loc}(I;V^{\alpha/2}(\Omega|\R^d)) \cap L^{\infty}_{loc}(I;L^2_{loc}(\Omega))$.\\ However, it turns out that all quantities in the corresponding weak formulation (see \eqref{eq:sol-concept}) are finite even under weaker assumptions. In particular, we have that $\cE^{K}(u,\phi) < \infty$ for $\phi \in H^{\alpha/2}_{\Omega}(\R^d)$ and $u \in H_{loc}^{\alpha/2}(\Omega) \cap L^1_{\alpha}(\R^d)$. Therefore, the tail terms do not have to be $L^2$ in space.

This discussion leads us to the following definition of the weak solution concept:

\begin{definition}[weak solution concept]
\label{def:col-concept}
Let $f \in L^{1}_{loc}(I; L^1_{loc}(\Omega))$.\\
We say that $u \in L^2_{loc}(I;H_{loc}^{\alpha/2}(\Omega)) \cap L^1_{loc}(I; L^1_{\alpha}(\R^d)) \cap L^{\infty}_{loc}(I;L^2_{loc}(\Omega))$ is a weak supersolution to $\partial_t u - L_t u = f$ in $I \times \Omega$ if the weak $L^2(\Omega)$-derivative $\partial_t u$ exists, $\partial_t u \in L^1_{loc}(I;L^2_{loc}(\Omega))$ and 
\begin{equation}
\label{eq:sol-concept}
(\partial_t u(t),\phi) + \cE^{K(t)}(u(t),\phi) \le (f(t),\phi), \quad \forall t \in I,~ \forall \phi \in H_{\Omega}^{\alpha/2}(\R^d) \text{ with } \phi \le 0.
\end{equation}
$u$ is called a weak subsolution if \eqref{eq:sol-concept} holds true for every $\phi \ge 0$. $u$ is called a weak solution, if it is a supersolution and a subsolution.
\end{definition}

Note that the regularity assumptions for weak solutions are natural in the sense that they are the nonlocal analogs of the assumptions in the famous articles by Moser (see \cite{Mos64}, \cite{Mos71}).

Clearly, the $L^1$-tail is finite for functions in $ L^1_{loc}(I; L^1_{\alpha}(\R^d))$. Therefore, it is finite for weak solutions in the natural function space according to \autoref{def:col-concept}. The following lemma states that if $u$ is assumed to belong to the slightly smaller energy space, also the $L^2$-tail (as opposed to the $L^\infty$-tail) is still finite.

\begin{lemma}
\label{lemma:tail-finite}
Let $u \in L^2_{loc}(I;V^{\alpha/2}(\Omega|\R^d))$. Then, for any $R > 0$ and $(t_0,x_0) \in \Omega$ such that $B_{2R}(x_0) \subset \Omega$ and $I^{\ominus}_R(t_0) \subset I$, it holds for some constant $c = c(n,\alpha_0,R) > 0$
\begin{align*}
\int_{I_{R}^{\ominus}(t_0)} \tail(u(t);R,x_0)^2 \d t \le c\int_{I_{R}^{\ominus}(t_0)} \Vert u(t) \Vert_{V^{\alpha/2}(B_{2R}(x_0) | \R^d)}^2 \d t < \infty.
\end{align*}
\end{lemma}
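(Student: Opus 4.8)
The plan is to bound the tail term pointwise in time by a suitable energy seminorm and then integrate. Fix $t \in I_R^{\ominus}(t_0)$ and abbreviate $B_r = B_r(x_0)$. Since $u(t) \in V^{\alpha/2}(\Omega | \R^d)$, the restriction $u(t)|_\Omega \in L^2(\Omega)$ and in particular $u(t) \in L^2(B_{2R})$. The core estimate I would establish is
\begin{align*}
\tail(u(t);R,x_0) \le c\,\Big( [u(t)]_{V^{\alpha/2}(B_{2R}|\R^d)} + \|u(t)\|_{L^2(B_{2R})} \Big) = c\,\|u(t)\|_{V^{\alpha/2}(B_{2R}|\R^d)},
\end{align*}
with $c = c(d,\alpha_0,R)$; squaring and integrating over $I_R^{\ominus}(t_0)$ then yields the claim, the finiteness following from $u \in L^2_{loc}(I;V^{\alpha/2}(\Omega|\R^d))$.

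To prove the pointwise bound, I would split the tail integral over $\R^d \setminus B_R$ into the annulus $B_{2R} \setminus B_R$ and the far region $\R^d \setminus B_{2R}$. On the far region, $|x_0 - y|^{-d-\alpha} \le c(R,d)\,(1+|y|)^{-d-\alpha}$ for a suitable constant, so that part is controlled by $(2-\alpha)\|u(t)\|_{L^1_\alpha(\R^d)}$ — but to stay within $V^{\alpha/2}(B_{2R}|\R^d)$ one cannot invoke the $L^1_\alpha$ norm directly; instead, the standard trick is to reach values of $u$ outside $B_{2R}$ through the energy seminorm. Concretely, pick an auxiliary point, say average against $u(t)$ over $B_{2R} \setminus B_R$ (a region of measure comparable to $R^d$): for $y \in \R^d \setminus B_R$ and $z \in B_{3R/2} \setminus B_R$ one has $|y - z| \le c|x_0 - y|$ when $|y| \ge 2R$, hence
\begin{align*}
(2-\alpha)\int_{\R^d \setminus B_{2R}} |u(t,y)|\,|x_0-y|^{-d-\alpha}\,\d y \le c(2-\alpha)\dashint_{B_{3R/2}\setminus B_R}\int_{\R^d\setminus B_{2R}} \frac{|u(t,y)| }{|y-z|^{d+\alpha}}\,\d y\,\d z.
\end{align*}
Writing $|u(t,y)| \le |u(t,y) - u(t,z)| + |u(t,z)|$, the first term is bounded (after Cauchy–Schwarz in $y$ over the region where $|y-z|^{-d-\alpha}$ is integrable, or more carefully using that $|y-z|\gtrsim R$) by $c\,[u(t)]_{V^{\alpha/2}(B_{2R}|\R^d)} + c\|u(t)\|_{L^2(B_{2R})}$, and the second term is bounded by $c\|u(t)\|_{L^1(B_{3R/2})} \le c\|u(t)\|_{L^2(B_{2R})}$. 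The annular piece $B_{2R}\setminus B_R$ is even simpler: there $|x_0-y|^{-d-\alpha} \le R^{-d-\alpha}$, so it is bounded by $(2-\alpha)R^{-d-\alpha}\|u(t)\|_{L^1(B_{2R})} \le c\|u(t)\|_{L^2(B_{2R})}$ by Cauchy–Schwarz.

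The main obstacle is making the far-region estimate genuinely controlled by the $V^{\alpha/2}(B_{2R}|\R^d)$ norm rather than by $\|u\|_{L^1_\alpha}$ — i.e., correctly exploiting the comparison $|y-z| \asymp |x_0-y|$ for $y$ far and $z \in B_{3R/2}\setminus B_R$, and checking that the resulting double integral is dominated by the seminorm $[u(t)]^2_{V^{\alpha/2}(B_{2R}|\R^d)}$ plus a local $L^2$ term, with a constant that stays bounded as $\alpha \nearrow 2$ (the factor $(2-\alpha)$ and the domain $B_{2R}$ being the fixed neighborhood where the double integral lives). Once the pointwise-in-$t$ inequality is in hand, integrating its square over $I_R^{\ominus}(t_0)$ and using the definition of $\|u\|_{L^2_{loc}(I;V^{\alpha/2}(\Omega|\R^d))}$ finishes the proof; the measurability in $t$ of $t \mapsto \tail(u(t);R,x_0)$ is routine from Fubini/Tonelli.
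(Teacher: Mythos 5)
Your proof is correct and follows exactly the standard approach underlying the references the paper cites in lieu of a self-contained proof (Proposition 13 in \cite{DyKa19}, Lemma 2.13 in \cite{KaWe22b}): split the tail into the near annulus $B_{2R}\setminus B_R$, controlled by $\|u(t)\|_{L^2(B_{2R})}$, and the far region $\R^d\setminus B_{2R}$, controlled via the averaging-over-an-annulus trick, triangle inequality $|u(t,y)|\le|u(t,y)-u(t,z)|+|u(t,z)|$, Cauchy--Schwarz, and the comparison $|y-z|\asymp|x_0-y|$. The $(2-\alpha)$ normalizations cancel correctly and all constants stay bounded for $\alpha\in[\alpha_0,2)$, so nothing is missing; one small stylistic point is that the displayed bound
\begin{align*}
c\,[u(t)]_{V^{\alpha/2}(B_{2R}|\R^d)}+c\|u(t)\|_{L^2(B_{2R})}
\end{align*}
attributes both pieces to the first (difference) term, whereas the $L^2$ piece actually arises from $|u(t,z)|$; this is harmless since both terms are controlled by $\|u(t)\|_{V^{\alpha/2}(B_{2R}|\R^d)}$.
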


\begin{proof}
See Proposition 13 in \cite{DyKa19} or Lemma 2.13 and Remark 2.14 in \cite{KaWe22b}. 
\end{proof}

The following classical iteration lemma will be used several times throughout our proof.

\begin{lemma}
\label{lemma:it}
Let $R > 0$ and $f : [R/2, R] \to [0,\infty)$ be bounded. Assume that there are $A, B, C, \gamma_1, \gamma_2 > 0$ and $\theta \in (0,1)$ such that for any $R/2 \le r \le s \le R$ it holds
\begin{align*}
f(r) \le A(s-r)^{-\gamma_1} + B(s-r)^{-\gamma_2} + C + \theta f(s).
\end{align*}
Then, there exists a constant $c > 0$, depending only on $\theta, \gamma_1, \gamma_2$, such that
\begin{align*}
f(R/2) \le c R^{-\gamma_1} A + c R^{-\gamma_2} B + cC.
\end{align*}
\end{lemma}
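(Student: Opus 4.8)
This is the classical "hole-filling" iteration lemma, due essentially to Giaquinta–Giusti; I would prove it in the standard way by iterating the hypothesis along a geometric sequence of radii. The plan is as follows. Fix $\tau \in (0,1)$ to be chosen later (depending only on $\theta$), set $\rho_0 = R/2$ and define the increasing sequence $\rho_{i+1} = \rho_i + (1-\tau)\tau^i (R/2)$, so that $\rho_i \nearrow \rho_\infty = R/2 + (R/2)\sum_{i\ge 0}(1-\tau)\tau^i = R$. Note $\rho_i \in [R/2, R]$ for all $i$ and $\rho_{i+1} - \rho_i = (1-\tau)\tau^i (R/2)$.

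Applying the hypothesis with $r = \rho_i$ and $s = \rho_{i+1}$ gives
\begin{align*}
f(\rho_i) \le A\big((1-\tau)\tfrac{R}{2}\big)^{-\gamma_1}\tau^{-i\gamma_1} + B\big((1-\tau)\tfrac{R}{2}\big)^{-\gamma_2}\tau^{-i\gamma_2} + C + \theta f(\rho_{i+1}).
\end{align*}
Iterating this bound $k$ times starting from $i=0$ yields
\begin{align*}
f(R/2) = f(\rho_0) \le \sum_{i=0}^{k-1} \theta^i \Big( A'\tau^{-i\gamma_1} + B'\tau^{-i\gamma_2} + C\Big) + \theta^k f(\rho_k),
\end{align*}
where $A' = A((1-\tau)R/2)^{-\gamma_1}$ and $B' = B((1-\tau)R/2)^{-\gamma_2}$. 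Now choose $\tau$ close enough to $1$ so that $\theta\tau^{-\gamma_1} < 1$ and $\theta\tau^{-\gamma_2} < 1$ simultaneously; this is possible because $\theta < 1$ and $\gamma_1,\gamma_2$ are fixed, and the choice depends only on $\theta,\gamma_1,\gamma_2$. Then the geometric series $\sum_i (\theta\tau^{-\gamma_1})^i$ and $\sum_i(\theta\tau^{-\gamma_2})^i$ and $\sum_i \theta^i$ all converge. Since $f$ is bounded on $[R/2,R]$ by hypothesis, the remainder term $\theta^k f(\rho_k) \to 0$ as $k \to \infty$. Letting $k \to \infty$ gives
\begin{align*}
f(R/2) \le \frac{A'}{1-\theta\tau^{-\gamma_1}} + \frac{B'}{1-\theta\tau^{-\gamma_2}} + \frac{C}{1-\theta},
\end{align*}
and recalling the definitions of $A',B'$ and absorbing $(1-\tau)^{-\gamma_1}, (1-\tau)^{-\gamma_2}$ into the constant (which now depends only on $\theta,\gamma_1,\gamma_2$) produces $f(R/2) \le c R^{-\gamma_1}A + c R^{-\gamma_2}B + cC$, as claimed.

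There is no serious obstacle here; the only point requiring a little care is the simultaneous choice of $\tau$ making both $\theta\tau^{-\gamma_1}$ and $\theta\tau^{-\gamma_2}$ less than $1$ — one simply takes $\tau > \max\{\theta^{1/\gamma_1},\theta^{1/\gamma_2}\}$, e.g. $\tau = \tfrac12(1 + \max\{\theta^{1/\gamma_1},\theta^{1/\gamma_2}\})$ — and the use of the a priori boundedness of $f$ to kill the tail term $\theta^k f(\rho_k)$, which is essential since without it the iteration would only give a bound on $f(\rho_0)$ in terms of $\liminf_k \theta^k f(\rho_k)$. Everything else is bookkeeping with convergent geometric series.
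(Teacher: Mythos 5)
Your proof is correct and is precisely the standard argument behind \cite[Lemma 1.1]{GiGi82}, which the paper cites without reproducing: iterate the hypothesis along the geometric sequence $\rho_{i+1} = \rho_i + (1-\tau)\tau^i(R/2)$, choose $\tau$ close enough to $1$ so that both $\theta\tau^{-\gamma_1}<1$ and $\theta\tau^{-\gamma_2}<1$, sum the resulting geometric series, and use the a priori boundedness of $f$ to discard the remainder $\theta^k f(\rho_k)$. Since the paper's "proof" is just a pointer to that reference, you have in effect supplied the missing details of the same argument; your handling of the simultaneous choice of $\tau$ and the role of boundedness in killing the tail are exactly the two points one must get right, and you got them right.
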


\begin{proof}
The proof is a direct consequence of \cite[Lemma 1.1]{GiGi82}.
\end{proof}

\section{Proof of local boundedness with $L^1$-tail}
\label{sec:lb}

The goal of this section is to establish the improved local boundedness estimate for weak subsolutions (see \autoref{thm:locbdL1tail}). In comparison to the existing results in the literature, we are able to control the supremum of a subsolution in a ball by a nonlocal quantity that is only $L^1$ in time, while all previous results contained the $L^\infty$-tail.\\ 
Our proof is based on a decomposition of the $L^\infty$-tail into two parts, which is carried out by a cutoff procedure on the solution. While the first part (intermediate tail) takes care of the values of the solution inside the solution domain, the second part (remaining tail) only contains the information outside the solution domain. Both terms will be treated separately. For the intermediate tail, we are able to make use of the equation, which allows us to estimate it by an $L^1$-tail (see \autoref{lemma:intermediate-tail}). By treating the remaining tail like a lower order source term, we are able to improve its integrability in the local boundedness estimate. We propose two different approaches to treat the remaining tail. While the first approach is based on a localization argument for the subsolution (see \autoref{sec:lb_first-proof}), the second approach goes via a modification of the level set truncation in the De Giorgi iteration (see \autoref{sec:lb_second-proof}).

Let us first provide the main ingredient for the treatment of the intermediate tail. We believe this result to be of independent interest.

\begin{lemma}
\label{lemma:intermediate-tail}
Let $u$ be a subsolution to $\partial_t u - L_t u = 0$ in $I \times \Omega$. Let $L_t$ be as before. Then, for every $R > 0$, $t_0 \in I$, $x_0 \in \Omega$ with $I_{4R}(t_0) \times B_{4R}(x_0) \subset I \times \Omega$, it holds
\begin{align*}
\sup_{I^{\ominus}_{R/2}(t_0)} \left( \dashint_{B_{R/2}(x_0)} \hspace{-0.2cm} u_+^2(t,x) \d x \right)^{1/2} &\le c \left(\dashint_{I^{\ominus}_R(t_0) \times B_R(x_0)} \hspace{-0.2cm} u_+^2(t,x) \d x \d t \right)^{1/2} \\
&\quad + c \dashint_{I^{\ominus}_R(t_0)} \hspace{-0.2cm} \tail(u_+(t);R,x_0) \d t.
\end{align*}
Here, $c = c(d,\alpha_0,\lambda,\Lambda) > 0$ is a constant.
\end{lemma}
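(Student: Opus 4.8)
\textbf{Proof plan for \autoref{lemma:intermediate-tail}.}

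The plan is to obtain an energy estimate for $u_+$ on nested parabolic cylinders and then run a Moser-type $L^2\to L^\infty$-in-time iteration, where the only nonlocal term that survives is the $L^1$-tail of $u_+$. First I would fix a cutoff: for $R/2\le r<s\le R$ choose $\tau\in C^\infty_c(B_s(x_0))$ with $\tau\equiv 1$ on $B_r(x_0)$, $0\le\tau\le 1$, $|\nabla\tau|\lesssim (s-r)^{-1}$, and a time cutoff $\eta$ that vanishes at the left endpoint of the relevant time interval and equals $1$ later. Testing the subsolution inequality \eqref{eq:sol-concept} with $\phi=\eta^2\tau^2 u_+$ (which lies in $H^{\alpha/2}_\Omega(\R^d)$ after the usual Steklov-averaging-in-time justification), the time term produces $\sup_t \int \eta^2\tau^2 u_+^2$ plus $\int\int \eta\eta' \tau^2 u_+^2$, and the bilinear form $\cE^{K(t)}(u(t),\eta^2\tau^2 u_+)$ is split as $\cE^{K(t)}_{B_s\times B_s}+2\,\cE^{K(t)}_{B_s\times B_s^c}$. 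The diagonal part is handled by the standard algebraic inequality $(a-b)(\tau^2(x)a_+-\tau^2(y)b_+)\ge \tfrac12(\tau(x)a_+-\tau(y)b_+)^2 - C(a_+-b_+\text{-free terms})\,(\tau(x)-\tau(y))^2$, yielding a good Gagliardo seminorm term $[\eta\tau u_+]^2_{V^{\alpha/2}}$ minus $C(s-r)^{-2}\|u_+\|_{L^2(B_s)}^2$ after using \eqref{eq:Kcomp} and $\alpha<2$.

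The crucial point is the cross term $\cE^{K(t)}_{B_s\times B_s^c}(u,\eta^2\tau^2u_+)$. Here, since $\phi=\eta^2\tau^2u_+$ is supported in $B_s$, one only picks up $x\in B_s$, $y\in B_s^c$, and the integrand is $(u(x)-u(y))\,\eta^2\tau^2(x)u_+(x)\,K(t;x,y)$. Dropping the favorable sign coming from $-u(y)u_+(x)\le u_+(y)u_+(x)$ and using $u_+(x)\le$ bounded on $\supp\tau$ only after the iteration — so instead I bound $(u(x)-u(y))u_+(x)\le u_+(x)^2 + u_+(x)u_+(y)$, integrate $y$ over $B_s^c$ against $|x-y|^{-d-\alpha}$, and observe that for $x\in\supp(\tau)\subset B_s$ with $s\le R$ one has $\int_{B_s^c}|x-y|^{-d-\alpha}\d y\lesssim (s-r)^{-\alpha}$ near the boundary but, more importantly, $\int_{B_R^c}u_+(y)|x-y|^{-d-\alpha}\d y \lesssim \tail(u_+(t);R,x_0) + (s-r)^{-d}\|u_+(t)\|_{L^1(B_R)}$ (splitting the exterior region into $B_R\setminus B_s$, controlled by the local $L^1$-norm, and $B_R^c$, which is exactly the tail up to a dimensional constant since $|x-y|\asymp|x_0-y|$ there). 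This is the step I expect to be the main obstacle: carefully tracking that the exterior integral over the annulus $B_R\setminus B_s$ is absorbed into the local $L^2$-term (via Young/Hölder and $\|u_+\|_{L^1(B_R)}\lesssim|B_R|^{1/2}\|u_+\|_{L^2(B_R)}$) while the genuinely far part becomes the $L^1$-in-time tail, and that all constants stay bounded as $\alpha\nearrow 2$.

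Collecting terms and dividing by the measures, one arrives at an estimate of the schematic form, for all $R/2\le r<s\le R$,
\begin{align*}
\sup_{I^{\ominus}_{\sigma(r)}(t_0)}\Big(\dashint_{B_r}u_+^2\Big)^{1/2} + \Big(\dashint \|\eta\tau u_+\|_{V^{\alpha/2}}^2\Big)^{1/2} \le \frac{c}{(s-r)^{\gamma}}\Big(\dashint_{I^{\ominus}_R\times B_R}u_+^2\Big)^{1/2} + c\dashint_{I^{\ominus}_R}\tail(u_+(t);R,x_0)\,\d t,
\end{align*}
where one must be slightly careful because the left-hand supremum is over a shrinking time interval. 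The cleanest route is to combine this with the parabolic Sobolev (Gagliardo–Nirenberg) inequality to gain a higher exponent $q=1+\tfrac{\alpha}{d}$ on the left, set up the standard geometric sequence of radii $r_k=R/2+R/2^{k+1}$, iterate, and let $k\to\infty$; the tail term, being a fixed quantity independent of $k$, simply accumulates a convergent geometric factor. Finally I would invoke \autoref{lemma:it} (or rather its iterated/dyadic version) to absorb the $\theta f(s)$-type self-improving term and conclude with the radii $R/2$ and $R$, which gives exactly the claimed bound. The only remaining bookkeeping is the time-interval shrinkage, handled by also iterating the left endpoints of $I^{\ominus}_{\sigma(r)}(t_0)$ so that the final interval is $I^{\ominus}_{R/2}(t_0)$, which costs only another geometric constant.
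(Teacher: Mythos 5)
Your starting point --- a Caccioppoli estimate obtained by testing with $\phi=\tau^2\eta^2 u_+$ on nested cylinders, giving $\sup_{t\in I_r^\ominus}\|u_+(t)\|_{L^2(B_r)}^2$ on the left --- is exactly the paper's first step. But the Moser-iteration detour you then propose (parabolic Sobolev gain $q=1+\alpha/d$, geometric radii $r_k$, $k\to\infty$) is both unnecessary and problematic, and it is not how the paper proceeds.

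It is unnecessary because the target of the lemma is an $L^\infty_t L^2_x$ bound, not an $L^\infty_{t,x}$ bound. The Caccioppoli inequality already places $\sup_t\dashint_{B_r}u_+^2$ on the left; there is nothing to upgrade in the spatial integrability. What the paper does with the cross term $\cE^{K(t)}_{B_s\times B_s^c}$ is far more economical: it is bounded by $c\,((s-r)/R)^{-d-\alpha}\int_t \left(\dashint_{B_{(r+s)/2}}u_+\right)\cdot\left(\int_{\R^d\setminus B_s}u_+(t,y)\tfrac{2-\alpha}{|y-x_0|^{d+\alpha}}\d y\right)\d t$, then Young's inequality is applied to the product: $\tfrac12\sup_t\left(\dashint_{B_s}u_+\right)^2 + c\left(((s-r)/R)^{-d-\alpha}\int_t\text{(tail)}\,\d t\right)^2$. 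By Jensen, $\sup_t(\dashint u_+)^2\le \sup_t\dashint u_+^2 = f(s)$, so after collecting terms one reads off $f(r)\le (s-r)^{-\alpha}A + (s-r)^{-2(d+\alpha)}B + \tfrac12 f(s)$, and \autoref{lemma:it} finishes the proof. There is no need to split the exterior integral into $B_R\setminus B_s$ and $B_R^c$; the whole integral over $\R^d\setminus B_s$ is simply enlarged to $\R^d\setminus B_{R/2}$ inside the ``$B$'' constant of the iteration lemma. (The intermediate-tail/remaining-tail decomposition that you sketch is indeed in the paper, but it appears in the \emph{second} proof of \autoref{thm:locbdL1tail} in \autoref{sec:lb_second-proof}, not in the proof of this lemma.)

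It is also problematic: if you genuinely run a Moser/De Giorgi iteration in space with $k\to\infty$, you are attempting to prove \autoref{thm:locbdL1tail}, which is a \emph{stronger} statement that the paper deduces from this lemma. Your remark that ``the tail term, being a fixed quantity independent of $k$, simply accumulates a convergent geometric factor'' is where this would break down: at step $k$ the cross term involves $\int_{\R^d\setminus B_{r_k}}u_+(t,y)|x-y|^{-d-\alpha}\d y$, which picks up the values of $u$ over the shrinking annuli $B_R\setminus B_{r_k}$; these are not fixed, and controlling them in a way that avoids the $L^\infty$-in-time tail is precisely the difficulty that this lemma is designed to resolve. So the plan as written either proves a weaker statement with the wrong (sup-in-time) tail, or must first secretly establish the very estimate it is trying to prove. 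Dropping the Moser iteration entirely and going directly from the Caccioppoli estimate to Young and \autoref{lemma:it} is both simpler and what the paper does.
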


\begin{proof}
Let $R/2 \le r \le s \le R$. By the Caccioppoli inequality, using \eqref{eq:Kcomp}, we obtain
\begin{align*}
&\sup_{t \in I_r^{\ominus}(t_0)} \dashint_{B_r(x_0)} u_+^2(t,x) \d x \le c (s-r)^{-\alpha} \int_{I_s^{\ominus}(t_0)} \dashint_{B_s(x_0)} u_+^2(t,x) \d x \d t\\
&\quad+ c \left( \frac{s-r}{R} \right)^{-d-\alpha} \hspace{-0.2cm} \int_{I_{\frac{r+s}{2}}^{\ominus}(t_0)} \left( \dashint_{B_{\frac{r+s}{2}}(x_0)} u_+(t,x) \d x \right) \\
&\qquad\qquad\qquad\qquad\qquad\qquad ~~ \times\left( \int_{\R^d \setminus B_{s}(x_0)} \hspace{-0.2cm} u_+(t,y) \frac{(2-\alpha)}{|y - x_0|^{d+\alpha}} \d y \right) \d t.
\end{align*}
Note that the aforementioned Caccioppoli inequality is by now classical and can be found e.g. in (3.12) in \cite{KaWe22b} (see also Lemma 2.4 in \cite{KaWe23} or \cite{Str19b}). It can be established by testing the weak formulation \eqref{eq:sol-concept} with $\phi = \tau^2 u_+$, where $\tau$ is a suitable cutoff function between $B_r(x_0)$ and $B_{\frac{r+s}{2}}(x_0)$ with $|\nabla \tau| \le c (s-r)^{-1}$ before multiplying both sides with a cutoff function $\chi$ between $I_r^{\ominus}(t_0)$ and $I_{\frac{r+s}{2}}^{\ominus}(t_0)$ with $|\chi'| \le c((r+s)^{\alpha} - r^{\alpha})^{-1}$.
For the second term, we estimate
\begin{align*}
&c\left( \frac{s-r}{R} \right)^{-d-\alpha}\int_{I_{\frac{r+s}{2}}^{\ominus}(t_0)} \left(\dashint_{B_{\frac{r+s}{2}}(x_0)} u_+(t,x) \d x \right) \\
&\qquad\qquad\qquad\qquad\qquad ~~ \times \left( \int_{\R^d \setminus B_{s}(x_0)} u_+(t,y) \frac{(2-\alpha)}{|y - x_0|^{d+\alpha}} \d y \right) \d t\\
&\le \frac{1}{2} \sup_{t \in I_s^{\ominus}(t_0)} \left(\dashint_{B_s(x_0)}  \hspace{-0.2cm} u_+(t,x) \d x \right)^2\\
&\quad + c  \left(\left( \frac{s-r}{R} \right)^{-d-\alpha} \hspace{-0.2cm} \int_{I_s^{\ominus}(t_0)} \int_{\R^d \setminus B_{s}(x_0)} \hspace{-0.2cm} u_+(t,y) \frac{(2-\alpha)}{|y - x_0|^{d+\alpha}} \d y \d t \right)^2\\
&\le  \frac{1}{2} \sup_{t \in I_s^{\ominus}(t_0)} \dashint_{B_s(x_0)}  \hspace{-0.2cm} u_+^2(t,x) \d x \\
&\quad + c \left( \frac{s-r}{R} \right)^{-2(d+\alpha)} \hspace{-0.2cm} \left( \int_{I_R^{\ominus}(t_0)} \int_{\R^d \setminus B_{R/2}(x_0)} \hspace{-0.2cm} u_+(t,y) \frac{(2-\alpha)}{|y - x_0|^{d+\alpha}} \d y \d t \right)^2.
\end{align*}
Setting
\begin{align*}
A &= c\int_{I_R^{\ominus}(t_0)} \dashint_{B_R(x_0)} u_+^2(t,x) \d x \d t,\\
B &= c R^{2(d+\alpha)} \left( \int_{I_R^{\ominus}(t_0)} \int_{\R^d \setminus B_{R/2}(x_0)} u_+(t,y) \frac{(2-\alpha)}{|y - x_0|^{d+\alpha}} \d y \d t \right)^2,\\
f(r) &= \sup_{t \in I_r^{\ominus}(t_0)} \dashint_{B_r(x_0)} u_+^2(t,x) \d x,
\end{align*}
we have shown that
\begin{align*}
f(r) \le (s-r)^{-\alpha} A + (s-r)^{-2(d+\alpha)} B + \frac{1}{2} f(s), ~~ \forall R/2 \le r \le s \le R.
\end{align*}
Note that $f$ is bounded since $u \in L^{\infty}_{loc}(I;L_{loc}^2(\Omega))$ by assumption. Therefore, a classical iteration lemma (see \autoref{lemma:it}) gives
\begin{align*}
f(R/2) \le c R^{-\alpha} A + c R^{-2(d+\alpha)} B,
\end{align*}
which yields our desired result.
\end{proof}

\subsection{Proof via localization argument}
\label{sec:lb_first-proof}

In this section we give a first proof of \autoref{thm:locbdL1tail} which relies on the existing local boundedness estimate involving the $L^\infty$-tail and a localization argument through which we will split the tail into two parts. 
In preparation for the proof, we first state the local boundedness estimate involving the $L^\infty$-tail:

\begin{lemma}
\label{lemma:locbdsuptail}
Let $u$ be a subsolution to $\partial_t u - L_t u = f$ in $I \times \Omega$, where $f \in L^{\mu,\theta}_{t,x}(I^{\ominus}_{R}(t_0) \times B_R(x_0))$ with
\begin{align*}
\frac{1}{\mu} + \frac{d}{\alpha \theta} < 1 ~~ \text{ or } (\mu,\theta) = (1,\infty).
\end{align*}
Let $L_t$ be as before. Then, for every $\delta >0$, $R > 0$, $t_0 \in I$, $x_0 \in \Omega$ with $I_{4R}(t_0) \times B_{4R}(x_0) \subset I \times \Omega$:
\begin{align*}
\sup_{I^{\ominus}_{R/2}(t_0) \times B_{R/2}(x_0)} \hspace{-0.3cm} u_+ &\le c(\delta) \left(\dashint_{I^{\ominus}_R(t_0) \times B_R(x_0)} \hspace{-0.3cm} u_+^2(t,x) \d x \d t \right)^{1/2}\\
&\quad + \delta \sup_{I^{\ominus}_R(t_0)} \tail(u_+(t);R,x_0) + c R^{\alpha - \frac{d}{\theta} - \frac{\alpha}{\mu}} \Vert f_+ \Vert_{L^{\mu,\theta}_{t,x}(I^{\ominus}_{R}(t_0) \times B_R(x_0))}.
\end{align*}
Here, $c = c(d,\alpha_0,\lambda,\Lambda) > 0$ is a constant.
\end{lemma}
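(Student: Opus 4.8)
The plan is to run the standard parabolic De~Giorgi iteration on the super-level sets of $u$, carrying the source term $f$ and the nonlocal tail through every step; the case $f\equiv0$ is the parabolic local boundedness estimate \eqref{eq:intro_parlb}, already available in \cite{Str19b}, \cite{FeKa13}, \cite{KaWe22b}, so the genuinely new content is the treatment of $f$ together with, in the borderline case $(\mu,\theta)=(1,\infty)$, the need to thread the $L^\infty_tL^2_x$-norm through the iteration. The starting point is the level-set version of the parabolic Caccioppoli inequality used in the proof of \autoref{lemma:intermediate-tail}, with the source term added: testing \eqref{eq:sol-concept} with $\phi=\chi^2\tau^2w_\ell$, where $w_\ell:=(u-\ell)_+$, $\tau$ is a spatial cutoff between $B_{\rho'}(x_0)\subset B_\rho(x_0)$ and $\chi$ a temporal cutoff, and using \eqref{eq:Kcomp} as in \cite{KaWe22b} (cf.\ (3.12) there), one obtains for $R/2\le\rho'<\rho\le R$ and $Q_\rho:=I^{\ominus}_\rho(t_0)\times B_\rho(x_0)$ an estimate of the form
\begin{align*}
\sup_{t\in I^{\ominus}_{\rho'}(t_0)}\int_{B_{\rho'}(x_0)}w_\ell^2(t,x)\,\d x+\int_{I^{\ominus}_{\rho'}(t_0)}[\tau w_\ell(t)]_{V^{\alpha/2}}^2\,\d t
&\le\frac{c}{(\rho-\rho')^{\alpha}}\int_{Q_\rho}w_\ell^2\\
&\quad+c\Big(\tfrac{R}{\rho-\rho'}\Big)^{d+\alpha}\sup_t\tail(u_+(t);\rho,x_0)\int_{Q_\rho}w_\ell\\
&\quad+c\int_{Q_\rho}f_+\,w_\ell,
\end{align*}
with constants that remain bounded as $\alpha\nearrow2$; the last term is the only one absent when $f\equiv0$.

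I would then combine this with the parabolic Sobolev embedding $L^\infty_tL^2_x\cap L^2_tV^{\alpha/2}(\R^d|\R^d)\hookrightarrow L^{2\kappa}_{t,x}$ for functions compactly supported in the space variable, where $\kappa=1+\tfrac{\alpha}{d}$ (or any $\kappa\in(1,\tfrac{d}{d-\alpha})$ if $d>\alpha$, any $\kappa>1$ if $d\le\alpha$), and with Chebyshev's inequality, so as to set up a De~Giorgi iteration over the levels $\ell_j=M(1-2^{-j-1})$ and radii $\rho_j=\tfrac R2(1+2^{-j})$ (hence $\rho_0=R$, $\rho_\infty=R/2$). With $Y_j:=\dashint_{Q_{\rho_j}}w_{\ell_j}^2$, this combination produces a recursion of standard De~Giorgi type whose convergence $Y_j\to0$ is guaranteed once $M$ is at least a fixed multiple of the right-hand side of the claimed estimate, which gives $\sup_{Q_{R/2}}u_+\le M$. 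The source term enters via Hölder's inequality in space--time, $\int_{Q_{\rho_j}}f_+w_{\ell_j}\le\|f_+\|_{L^{\mu,\theta}_{t,x}(Q_\rho)}\|w_{\ell_j}\|_{L^{\mu',\theta'}_{t,x}(\{u>\ell_j\}\cap Q_{\rho_j})}$ ($\mu',\theta'$ the Hölder conjugates), after which $\|w_{\ell_j}\|_{L^{\mu',\theta'}_{t,x}}$ is interpolated between $L^2_{t,x}$ and the parabolic Sobolev space $L^{2\kappa}_{t,x}$ at the price of a positive power of $|\{u>\ell_j\}\cap Q_{\rho_j}|$. The hypothesis $\tfrac1\mu+\tfrac{d}{\alpha\theta}<1$ is precisely what forces the exponent of $Y_j$ in the resulting recursion to remain strictly above $1$, so that the iteration still converges; tracking the scaling of all constants in this step produces the factor $R^{\alpha-\frac d\theta-\frac\alpha\mu}$ in front of $\|f_+\|_{L^{\mu,\theta}_{t,x}}$.

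For the endpoint $(\mu,\theta)=(1,\infty)$ the scheme is unchanged, except that the only available Hölder bound, $\int_{Q_{\rho_j}}f_+w_{\ell_j}\le\|f_+\|_{L^{1,\infty}_{t,x}}\sup_t\|w_{\ell_j}(t)\|_{L^1_x}$, forces control of $\sup_t\|w_{\ell_j}(t)\|_{L^1_x}$; this is possible because the left-hand side of the Caccioppoli inequality already contains $\sup_t\int_{B_{\rho'}}w_{\ell_j}^2(t)$, so one augments $Y_j$ by $\sup_t\dashint_{B_{\rho_j}}w_{\ell_j}^2(t)$ and interpolates $\|w_{\ell_j}(t)\|_{L^1_x}$ between $|\{u>\ell_j\}\cap B_{\rho_j}|$ and $\|w_{\ell_j}(t)\|_{L^2_x}$. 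Two points remain. First, the small coefficient in front of the tail: in the recursion the tail is always multiplied by $\int_{Q_{\rho_j}}w_{\ell_j}$, hence by a positive power of $|\{u>\ell_j\}\cap Q_{\rho_j}|$, and this is exactly the structure that, as in the elliptic local boundedness estimates \cite{DKP16}, \cite{Coz17}, lets one balance the tail against the $L^2$-term and reach, for any prescribed $\delta>0$, the estimate with $\delta\sup_{t\in I^{\ominus}_R(t_0)}\tail(u_+(t);R,x_0)$ and a constant $c(\delta)$. Second, the harmless discrepancy between the radius $\rho\in[R/2,R]$ at which the tail naturally occurs and the radius $R$, which is removed by estimating $\sup_t\big(\tail(u_+(t);\rho,x_0)-\tail(u_+(t);R,x_0)\big)\le cR^{-\alpha}\sup_{Q_R}u_+$ and absorbing the right-hand side through the iteration \autoref{lemma:it}.

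The two steps I expect to cost the most effort are: (i) the endpoint $(\mu,\theta)=(1,\infty)$, where the source term is borderline and the $L^\infty_tL^2_x$-norm must be carried through the entire De~Giorgi iteration — this is the subtlety behind the acknowledgement to L.~Silvestre; and (ii) the scaling bookkeeping, both to extract the sharp exponent $R^{\alpha-\frac d\theta-\frac\alpha\mu}$ and to make the dependence of $c(\delta)$ on $\delta$ explicit, carried out so that every constant stays robust as $\alpha\nearrow2$ — this forces one to keep track of the $(2-\alpha)$-factors in \eqref{eq:Kcomp}, in $\tail$, and in the Sobolev constant simultaneously.
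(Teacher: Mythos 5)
Your plan is the direct route: set up the level-set Caccioppoli inequality, carry $f$ through the De~Giorgi iteration, use Hölder against $f$ and Chebyshev to build in the nonlinear gain. That works for the subcritical range $\tfrac1\mu+\tfrac{d}{\alpha\theta}<1$, and the paper agrees this is the standard path there. But the case that actually matters for this article is the endpoint $(\mu,\theta)=(1,\infty)$, and there your iteration does not close as described. With $f\in L^{1,\infty}_{t,x}$ the only available bound is $\int_{Q_{\rho_j}}f_+w_{\ell_j}\le\Vert f_+\Vert_{L^{1,\infty}}\sup_t\Vert w_{\ell_j}(t)\Vert_{L^1_x}$, and after Cauchy--Schwarz and Chebyshev this is at best $\lesssim(\ell_j-\ell_{j-1})^{-1}\Vert f_+\Vert_{L^{1,\infty}}\,\sup_t\Vert w_{\ell_{j-1}}(t)\Vert_{L^2_x}^2$. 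This is \emph{linear} in the iterated quantity, with a coefficient $\sim 2^{j}/M$ that is unbounded in $j$; augmenting $Y_j$ by $\sup_t\dashint_{B_{\rho_j}}w_{\ell_j}^2(t)$ does not change this, since that term feeds back into itself linearly with the same $2^{j}$ coefficient. A De~Giorgi recursion of the form $Z_j\le A\,b^{j}Z_{j-1}^{1+\sigma}+\eta\,2^{j}Z_{j-1}$ does not converge to zero for any fixed $\eta>0$, however small $Z_0$ is. This is precisely the failure one expects because $(1,\infty)$ sits exactly on the line $\tfrac1\mu+\tfrac{d}{\alpha\theta}=1$: the nonlinear gain from Chebyshev vanishes at that endpoint.

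The paper avoids the endpoint iteration altogether. The observation is that if $u$ is a subsolution of $\partial_t u-L_tu\le f_+$, then the time-translated function
\begin{equation*}
v(t,x):=u(t,x)-\int_{t_0}^{t}\Vert f_+(s)\Vert_{L^{\infty}(B_R(x_0))}\,\d s
\end{equation*}
is a subsolution of $\partial_t v-L_tv\le 0$ in $I\times B_R(x_0)$, because the subtracted term is constant in $x$ and cancels the worst of $f_+$ pointwise. One then applies the lemma with $f\equiv0$ (the base case cited from \cite{Str19b}, \cite{KaWe22b}, \cite{KaWe23}) to $v$, and converts back to $u$ using $u_+\le v_+\le u_+ +\Vert f_+\Vert_{L^{1,\infty}}$, which also shifts the tail by $\lesssim R^{-\alpha}\Vert f_+\Vert_{L^{1,\infty}}$. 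This is the classical parabolic device of absorbing an $L^1_tL^\infty_x$ source into the time derivative, and it is the idea your proposal is missing. (Incidentally, the same time-dependent level-shift trick reappears in the paper's second proof of \autoref{thm:locbdL1tail}, where $w_l$ is defined with a subtracted integral of the tail, for exactly this reason.) So: for subcritical $(\mu,\theta)$ your route is sound; at the endpoint it is not, and you should replace the direct iteration by the reduction to $f\equiv0$ via the substitution above.
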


\begin{proof}
The proof in the case $f \in L^{\infty,\infty}_{t,x}$ is standard and can be found for instance in \cite{KaWe22b}, \cite{KaWe23}, \cite{Str19b}. Note that although a slightly smaller weak solution space is considered in these references, the proof directly carries over to our more general setting. For $f \in L^{\mu,\theta}_{t,x}$ with subcritical $(\mu,\theta)$, i.e. $\frac{1}{\mu} + \frac{d}{\alpha\theta} < 1$, one can use standard interpolation arguments as in the local case. Note that this case is not relevant for this article.\\
In case $f \in L^{1,\infty}_{t,x}$, we observe that since $u$ is a subsolution to $\partial_t u - L_t u=f$, it is a subsolution to $\partial_t u - L_t u=f_+$, and therefore the function
\begin{align*}
v(t,x) := u(t,x) - \int_{t_0}^t \Vert f_+(s) \Vert_{L^{\infty}(B_R(x_0))} \d s
\end{align*}
is a subsolution to $\partial_t v - L_t v = 0$. Therefore, the desired estimate follows by an application of \autoref{lemma:locbdsuptail} with $f = 0$ to $v$.
\end{proof}

When applying the localized solution to \autoref{lemma:locbdsuptail}, the $L^\infty$-tail will only see values of $u$ inside the solution domain (intermediate tail), which allows us to apply \autoref{lemma:intermediate-tail}. Moreover, the remaining tail will enter the estimate only as a source term and can therefore automatically be estimated by an $L^1$-norm in time.

We are now ready to give the first proof of \autoref{thm:locbdL1tail}.

\begin{proof}[Proof of \autoref{thm:locbdL1tail}]
Let us take $\eta \in C_c^{\infty}(B_{3R/2}(x_0))$ with $\eta \equiv 1$ in $B_{R}(x_0)$. We decompose
\begin{align*}
u = \eta u + (1-\eta) u
\end{align*}
and observe that $\eta u$ solves
\begin{align*}
\partial_t(\eta u) - L_t(\eta u) = L_t((1-\eta)u) \text{ in } I_{2R}(t_0) \times B_{2R}(x_0).
\end{align*}
In particular, for $t \in I_{2R}(t_0)$ and $x \in B_{R/2}(x_0)$ we compute using \eqref{eq:Kcomp}
\begin{align}
\label{eq:festimate}
\begin{split}
L_t((1-\eta) u)(x) &= \int_{\R^d \setminus B_{R}(x_0)} ((1-\eta)u(t,y)) K(t;x,y) \d y\\
&\le c (2-\alpha) \int_{\R^d \setminus B_{R}(x_0)} u_+(t,y) |y - x_0|^{-d-\alpha} \d y.
\end{split}
\end{align}
In particular, $f_+ := L_t((1-\eta) u)_+ \in L^{1,\infty}_{t,x}(I_{2R}(t_0) \times B_{R/2}(x_0))$. Therefore, by application of \autoref{lemma:locbdsuptail} to $\eta u$, we obtain:
\begin{align*}
\sup_{I^{\ominus}_{R/4}(t_0) \times B_{R/4}(x_0)} (\eta u)_+ &\le c \left(\dashint_{I^{\ominus}_{R/2}(t_0) \times B_{R/2}(x_0)} (\eta u)_+^2(t,x) \d x \d t \right)^{1/2}\\
&\quad + c (2-\alpha) R^{\alpha} \sup_{I^{\ominus}_{R/2}(t_0)} \int_{\R^d \setminus B_{R/4}(x_0)} \hspace{-0.2cm} (\eta u)_+(t,x) |x_0 - x|^{-d-\alpha} \d x\\
&\quad + c\Vert f_+ \Vert_{L^{1,\infty}_{t,x}(I^{\ominus}_{R/2}(t_0) \times B_{R/2}(x_0))}.
\end{align*}
By the properties of $\eta$, it follows:
\begin{align*}
& \sup_{I^{\ominus}_{R/4}(t_0) \times B_{R/4}(x_0)} u_+ \le c \left(\dashint_{I^{\ominus}_{R/2}(t_0) \times B_{R/2}(x_0)} u_+^2(t,x) \d x \d t \right)^{1/2}\\
&\qquad \qquad \qquad + c (2-\alpha) R^{\alpha} \sup_{I^{\ominus}_{R/2}(t_0)} \int_{B_{3R/2}(x_0) \setminus B_{R/4}(x_0)} \hspace{-0.2cm} u_+(t,x) |x_0 - x|^{-d-\alpha} \d x \\
&\qquad\qquad\qquad + c \Vert f_+ \Vert_{L^{1,\infty}_{t,x}(I^{\ominus}_{R/2}(t_0) \times B_{R/2}(x_0))}.
\end{align*}
Note that by \autoref{lemma:intermediate-tail}:
\begin{align*}
R^{\alpha} & \sup_{I^{\ominus}_{R/2}(t_0)} \int_{B_{3R/2}(x_0) \setminus B_{R/4}(x_0)} \hspace{-0.2cm} u_+(t,x) |x_0 - x|^{-d-\alpha} \d x \\
&\le c \sup_{I^{\ominus}_{R/2}(t_0)} \dashint_{B_{3R/2}(x_0)} u_+(t,x) \d x\\
&\le c \left(\dashint_{I^{\ominus}_{3R}(t_0) \times B_{3R}(x_0)} \hspace{-0.2cm} u_+^2(t,x) \d x \d t \right)^{1/2}\\
&\quad + c \dashint_{I^{\ominus}_{3R}(t_0)} \int_{\R^d \setminus B_{3R}(x_0)} \hspace{-0.2cm} u_+(t,x) |x_0 - x|^{-d-\alpha} \d x \d t.
\end{align*}
Therefore, using also \eqref{eq:festimate}, we obtain:
\begin{align*}
\sup_{I^{\ominus}_{R/4}(t_0) \times B_{R/4}(x_0)} u_+ &\le c \left(\dashint_{I^{\ominus}_{3R}(t_0) \times B_{3R}(x_0)} u_+^2(t,x) \d x \d t \right)^{1/2}\\
&+ c (2-\alpha) \dashint_{I^{\ominus}_{3R}(t_0)} \int_{\R^d \setminus B_{3R}(x_0)} u_+(t,x) |x_0 - x|^{-d-\alpha} \d x \d t.
\end{align*}
The aforementioned estimate can be proved for adjusted time-space cylinders, thus implying the desired estimate \eqref{eq:locbdL1tail}.
\end{proof}

\subsection{Proof via modified level set truncation in the De Giorgi iteration}
\label{sec:lb_second-proof}

We end this section with a second proof of \autoref{thm:locbdL1tail}. This proof makes use of \autoref{lemma:intermediate-tail} but not of the local boundedness estimate with $L^\infty$-tail (see \autoref{lemma:locbdsuptail}).

Recall the following observation from the proof of \autoref{lemma:locbdsuptail} in case $(\mu, \theta) = (1, \infty)$, namely that if $\partial_t u - L_t u \le f$, then also $\partial_t v - L_t v \le 0$, where $v = u - \int_{t_0}^t \Vert f_+(s) \Vert_{L^{\infty}} \d s$. The idea of the second proof of \autoref{thm:locbdL1tail} is to incorporate this observation into the De Giorgi iteration. This goes via a suitable modification of the level set truncation in the De Giorgi iteration. As an advantage of this approach, we do not have to localize the solution $u$ in order to consider the tail like a source term, but can directly profit from a cancellation of the tail terms within the iteration scheme. We believe that this idea might be useful since it could allow to generalize our technique to nonlinear operators, such as the fractional $p$-Laplacian.

\begin{proof}[Proof of \autoref{thm:locbdL1tail}]
For $l > 0$ we define
\begin{align*}
w_l(t,x) = \left(u(t,x) - C R^{-\alpha}\int_{t_0-(r+\rho)^{\alpha}}^t \tail(u_+(s);R,x_0) \d s - l \right)_+,
\end{align*}
where $C > 0$ is a constant that will be chosen suitably. Note that if $t \mapsto u(t,x)$ is differentiable
\begin{align}
\label{eq:derivativecomputation}
(\partial_t u) w_l (t,x)= \frac{1}{2} \partial_t w_l^2(t,x) + C R^{-\alpha}\tail(u_+(t);R,x_0) w_l(t,x).
\end{align}
Therefore, by following the proof of Theorem 3.6 in \cite{KaWe22b} and formalizing \eqref{eq:derivativecomputation} with the help of Steklov averages as in Section 7 in \cite{KaWe22b}, we obtain that for any $R/2 \le r \le R$, $r + \frac{\rho}{2} \le 3R/4$, $r+\rho \le R$, and $l > 0$:
\begin{align*}
\sup_{t \in I_{r+\rho}^{\ominus}(t_0)} & \int_{B_{r+\rho}(x_0)} \chi^2(t)\tau^2(x) w_l^2(t,x) \d x + \int_{I_{r+\rho}^{\ominus}(t_0)} \cE^{K(s)}_{B_{r+\rho}}(\tau w_l(s) , \tau w_l(s)) \d s\\
&\le c_1 (\rho^{-\alpha} \vee \left((r+\rho)^{\alpha} - r^{\alpha})^{-1}\right) \Vert w_l^2(s) \Vert_{L^1(I_{r+\rho}^{\ominus}(t_0) \times B_{r+\rho}(x_0))}\\
&+ c_1 \int_{I_{r+\rho}^{\ominus}(t_0)} \hspace{-0.2cm} \chi^2(s) \Vert \tau^2 w_l(s) \Vert_{L^1(B_{r+\rho}(x_0))} \\
& \qquad\qquad \times \left(\sup_{x \in B_{r + \frac{\rho}{2}}(x_0)} \int_{\R^d \setminus B_{r+\rho}(x_0)} \hspace{-0.2cm} u_+(s,y) K(s;x,y) \d y \right) \d s\\
&- C \int_{I_{r+\rho}^{\ominus}(t_0)} \chi^2(s) \Vert \tau^2 w_l(s) \Vert_{L^{1}(B_{r+\rho}(x_0))} R^{-\alpha} \tail(u_+(s);R,x_0) \d s,
\end{align*}
where $c_1 > 0$.
The last term on the right hand side stems form the second term in \eqref{eq:derivativecomputation}.
The second main novelty of our proof is the following decomposition of the tail term into an intermediate tail and a remaining tail:
{\allowdisplaybreaks
\begin{align}
&\sup_{x \in B_{r + \frac{\rho}{2}}(x_0)} \int_{\R^d \setminus B_{r+\rho}(x_0)} u_+(s,y) K(s;x,y) \d y \notag\\
&= \sup_{x \in B_{r + \frac{\rho}{2}}(x_0)} \int_{B_{R}(x_0) \setminus B_{r+\rho}(x_0)} \hspace{-0.6cm} u_+(s,y) K(s;x,y)  \d y \notag \\
&\quad + \hspace{-0.2cm} \sup_{x \in B_{r + \frac{\rho}{2}}(x_0)} \int_{\R^d \setminus B_{R}(x_0)} \hspace{-0.5cm} u_+(s,y) K(s;x,y) \d y \notag\\
&\le c_2 \rho^{-d-\alpha} \int_{B_{R}(x_0) } u_+(s,y) \d y\\
&\quad + c_2 (2-\alpha) \int_{\R^d \setminus B_{R}(x_0)} u_+(s,y) |x_0 - y|^{-d-\alpha} \d y \notag\\
&\le c_3 \rho^{-\alpha} \left( \frac{R}{\rho} \right)^d \dashint_{B_{R}(x_0) } u_+(s,y) \d y + c_3 R^{-\alpha} \tail(u_+(s);R,x_0), \label{eq:tail-split}
\end{align}
}where $c_2, c_3 > 0$, and we also used \eqref{eq:Kcomp}.
The intermediate tail takes into account values of $u$ that lie inside the solution domain, which allows us to estimate it in a second step, using the PDE. It will be treated in the De Giorgi iteration procedure just as the usual tail term was treated in the existing literature so far. The remaining tail takes into account the values of $u$ at infinity. This term will cancel with the additional term on the right hand side stemming from our choice of $w_l$ upon choosing $C := c_1c_3 > 0$. 
By combining the previous two estimates, we obtain:
\begin{align*}
&\sup_{t \in I_{r+\rho}^{\ominus}(t_0)} \int_{B_{r+\rho}(x_0)} \chi^2(t)\tau^2(x) w_l^2(t,x) \d x + \int_{I_{r+\rho}^{\ominus}(t_0)} \cE^{K(s)}_{B_{r+\rho}}(\tau w_l(s) , \tau w_l(s)) \d s\\
&~~\le c_4 (\rho^{-\alpha} \vee \left((r+\rho)^{\alpha} - r^{\alpha})^{-1}\right) \Vert w_l^2(s) \Vert_{L^1(I_{r+\rho}^{\ominus}(t_0) \times B_{r+\rho}(x_0))}\\
&~~\quad + c_4 \rho^{-\alpha} \left( \frac{R}{\rho} \right)^d \int_{I_{r+\rho}^{\ominus}(t_0)} \chi^2(s) \Vert \tau^2 w_l(s) \Vert_{L^1(B_{r+\rho}(x_0))}  \left(\dashint_{B_{R}(x_0) } \hspace{-0.2cm} u_+(s,y) \d y \right) \d s\\
&~~\le c_5 (\rho^{-\alpha} \vee \left((r+\rho)^{\alpha} - r^{\alpha})^{-1}\right) \Vert w_l^2(s) \Vert_{L^1(I_{r+\rho}^{\ominus}(t_0) \times B_{r+\rho}(x_0))}\\
&~~\quad + c_5 \rho^{-\alpha} \left( \frac{R}{\rho} \right)^d \Vert w_l \Vert_{L^1(I_{r+\rho}^{\ominus}(t_0) \times B_{r+\rho}(x_0))}  \left( \sup_{t \in I_{R}^{\ominus}(t_0)}\dashint_{B_{R}(x_0) } u_+(t,y) \d y \right),
\end{align*}
where $c_4, c_5 > 0$.
From here, we are in the position to run a well-known De Giorgi iteration scheme, choosing appropriate sequences $r_i, \rho_i, l_i$, exactly as in the proof of Theorem 3.6 in \cite{KaWe22b} (see also Lemma 4.2 in \cite{KaWe23}). The only difference to the aforementioned proof is that the $L^\infty$-tail has to be replaced by the quantity $\left( \sup_{t \in I_{R}^{\ominus}(t_0)}\dashint_{B_{R}(x_0) } u_+(t,y) \d y \right)$.\\
Consequently, after running the iteration argument, we obtain
\begin{align*}
&\sup_{I_{R/2}^{\ominus}(t_0) \times B_{R/2}(x_0)} \left(u - C R^{-\alpha}\int_{I_{R/2}^{\ominus}(t_0)} \tail(u_+(s);R,x_0) \d s \right)_+\\
&\le M := \delta \left( \sup_{t \in I_{R}^{\ominus}(t_0)}\dashint_{B_{R}(x_0) } u_+(t,x) \d x  \right)\\
&~~ + c_6 \delta^{-\frac{d+\alpha}{2\alpha}} \left( \dashint_{I_{R}^{\ominus}(t_0) \times B_R(x_0)} \left[ u(t,x) - C R^{-\alpha}\int_{t_0 - R^{\alpha}}^t \hspace{-0.2cm} \tail(u_+(s);R,x_0) \d s \right]_+^2 \d x \d t \right)^{1/2}
\end{align*}
for some constant $c_6 > 0$, which implies
\begin{align*}
\sup_{I_{R/2}^{\ominus}(t_0) \times B_{R/2}(x_0)} u_+ &\le \delta \left( \sup_{t \in I_{R}^{\ominus}(t_0)}\dashint_{B_{R}(x_0) } u_+(t,x) \d x  \right) \\
&\quad + c_7 \delta^{-\frac{d+\alpha}{2\alpha}} \left( \dashint_{I_{R}^{\ominus}(t_0) \times B_R(x_0)} u_+^2 \d x \d t \right)^{1/2}\\
&\quad + c_7 \delta^{-\frac{d+\alpha}{2\alpha}} \dashint_{I_{R}^{\ominus}(t_0)} \tail(u_+(t);R,x_0) \d t
\end{align*}
for some constant $c_7 > 0$. This proves the desired result upon estimating the first term on the right hand side using \autoref{lemma:intermediate-tail}.
\end{proof}

\section{Proof of weak Harnack inequalities with $L^1$-tail}
\label{sec:whi}

In this section, we present a proof of the improved weak parabolic Harnack inequality (see \autoref{thm:improvedwHI-parabolic}) for supersolutions. As mentioned before, in comparison to the existing results in the literature (see \autoref{lemma:wPHI}) the improved weak Harnack inequality allows us to control the nonlocal tail term of a supersolution by its infimum in a ball. For the proof we test the weak formulation with a test function of power type, as it is used for the Moser iteration for small positive exponents. Moreover, our proof relies on the well-known weak parabolic Harnack inequality:

\begin{lemma}[weak parabolic Harnack inequality]
\label{lemma:wPHI}
Let $u$ be a globally nonnegative supersolution to $\partial_t u - L_t u = 0$ in $I \times \Omega$. Let $L_t$ be as before.  Then, for every $R > 0$, $t_0 \in I$, $x_0 \in \Omega$ with $I_{4R}(t_0) \times B_{4R}(x_0) \subset I \times \Omega$, it holds
\begin{align}
\label{eq:wPHI}
\dashint_{I^{\ominus}_{R}(t_0 - R^{\alpha}) \times B_R(x_0)} u(t,x) \d x \d t \le c \inf_{I^{\oplus}_{R}(t_0) \times B_R(x_0)} u. 
\end{align}

Here, $c = c(d,\alpha_0,\lambda,\Lambda) > 0$ is a constant.
\end{lemma}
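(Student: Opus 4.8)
The plan is to run the nonlocal parabolic Moser iteration in the shape that produces a weak Harnack inequality. First I would fix $d>0$, set $\bar u:=u+d\ge d>0$, and observe that $\bar u$ is again a globally nonnegative supersolution of $\partial_t\bar u-L_t\bar u=0$ in $I\times\Omega$; it then suffices to prove $\dashint_{Q^-}\bar u\le C\inf_{Q^+}\bar u$ with $C$ independent of $d$ and to let $d\searrow0$. Here $Q^-:=I^\ominus_R(t_0-R^\alpha)\times B_R(x_0)$ and $Q^+:=I^\oplus_R(t_0)\times B_R(x_0)$, and the hypothesis $I_{4R}(t_0)\times B_{4R}(x_0)\subset I\times\Omega$ leaves room for all the slightly enlarged intermediate cylinders $\widetilde Q^-\supset Q^-$ and $\widetilde Q^+\supset Q^+$ used below, with $\widetilde Q^-$ lying in the strict past of $\widetilde Q^+$.

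The argument rests on three ingredients, each obtained by inserting a power-type test function built from $\tau^2\chi^2\bar u^{\beta-1}$ ($\tau$ a spatial and $\chi$ a temporal cutoff, with the Steklov-average formalization as in \cite{KaWe22b}) into \eqref{eq:sol-concept} and combining the resulting Caccioppoli-type inequality with the fractional Sobolev inequality on $B_{4R}(x_0)$. (i) With $\beta<0$ one runs the Moser iteration for negative exponents, i.e.\ a local boundedness estimate for $\bar u^{-1}$, and obtains for every small $p_0>0$
\begin{align*}
\inf_{Q^+}\bar u\ \ge\ c\,\Big(\dashint_{\widetilde Q^+}\bar u^{-p_0}\Big)^{-1/p_0}.
\end{align*}
(ii) With $\beta\in(0,1)$ one runs the Moser iteration for positive exponents for the supersolution $\bar u$ on the past cylinder; after finitely many steps the $L^{p_0}$-average controls the $L^1$-average:
\begin{align*}
\dashint_{Q^-}\bar u\ \le\ c\,\Big(\dashint_{\widetilde Q^-}\bar u^{p_0}\Big)^{1/p_0}.
\end{align*}
(iii) Testing with $\tau^2\chi^2\bar u^{-1}$ one shows that $w:=-\log\bar u$ satisfies a parabolic Caccioppoli estimate, so that a parabolic John--Nirenberg / Bombieri--Giusti argument — in which the time lag between $\widetilde Q^-$ and $\widetilde Q^+$ is decisive — yields one crossover exponent $p_0>0$ with
\begin{align*}
\Big(\dashint_{\widetilde Q^-}\bar u^{p_0}\Big)^{1/p_0}\ \le\ c\,\Big(\dashint_{\widetilde Q^+}\bar u^{-p_0}\Big)^{-1/p_0}.
\end{align*}

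Fixing $p_0$ as the exponent in (iii) and chaining (ii), (iii), (i) gives $\dashint_{Q^-}\bar u\le c\inf_{Q^+}\bar u$, and $d\searrow0$ yields \eqref{eq:wPHI}. The step that needs the most care — and which has no counterpart in the local theory — is the bookkeeping of the nonlocal tail contributions generated when the test function is localized, i.e.\ the integrals over $B\times(\R^d\setminus B)$ in the bilinear form, for the balls $B$ appearing in the iteration. The point is that global nonnegativity of $u$ makes all of these harmless: the piece containing $\bar u(x)\int_{\R^d\setminus B}K(x,y)\,\d y$ is a local term (the kernel is integrated away from its singularity) and is absorbed into the Caccioppoli right-hand side, while the genuinely nonlocal piece $\int_{\R^d\setminus B}\bar u(y)K(x,y)\,\d y$ appears with a sign that lets one simply discard it in each of (i)--(iii); consequently no tail term survives and all constants stay independent of $d$. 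The heaviest classical ingredient is the parabolic John--Nirenberg lemma underlying (iii). An alternative to (ii)--(iii) is a De Giorgi measure-to-pointwise estimate combined with an expansion of positivity, in the spirit of \cite{CCV11} adapted to the present variational nonlocal framework; detailed proofs of \autoref{lemma:wPHI} in this generality are given in \cite{FeKa13}, \cite{Str19b}, \cite{KaWe22b}.
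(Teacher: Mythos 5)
Your sketch is correct and follows exactly the route taken in the references that the paper's own one-line proof cites (Felsinger--Kassmann, Kassmann--Weidner, Chaker--Kassmann--Weidner): Moser iteration for negative and small positive exponents linked by a logarithmic/John--Nirenberg crossover, with the key observation that global nonnegativity gives the tail contributions a favourable sign so they can be discarded and all constants stay independent of $d$. The paper additionally remarks that those references work in a slightly smaller solution space than \autoref{def:col-concept} and that the argument carries over verbatim -- a point worth adding if you were to write the proof out in full.
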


\begin{proof}
The weak parabolic Harnack inequality was proved for example in \cite{FeKa13}, \cite{CKW23}, or \cite{KaWe22a}. Note that although a slightly smaller weak solution space is considered in these references, the proof directly carries over to our more general setting. For the elliptic case, we refer the interested reader  to \cite{Kas09}, \cite{DyKa20}.
\end{proof}

Having at hand the weak parabolic Harnack inequality, we are ready to establish the improved weak parabolic Harnack inequality with $L^1$-tail, \autoref{thm:improvedwHI-parabolic}.

\begin{proof}[Proof of \autoref{thm:improvedwHI-parabolic}]
In the light of the weak parabolic Harnack inequality (see \autoref{lemma:wPHI}) it remains to prove that 
\begin{align}
\label{eq:tail-inf}
\dashint_{I_{R}^{\ominus}(t_0-R^\alpha)} \tail(u(t);R,x_0) \d t \le c \inf_{I^{\oplus}_{R}(t_0) \times B_R(x_0)} u.
\end{align}

Let $p \in (0,1)$. We denote $\U = u + \eps$ for some arbitrary $\eps > 0$. First, we claim that for every $t \in I_{R/2}^{\oplus}(t_0 -R^{\alpha})$:
\begin{align}
\label{eq:tailkey}
\begin{split}
& \cE^{K(t)}(u(t),-\tau^2 \U^{-p}(t)) \ge - c_1 R^{-\alpha} \Vert \U^{-p+1}(t)\Vert_{L^1(B_{R}(x_0))}\\
&\qquad \qquad \qquad +  c_2 \left( \int_{B_{R/2}(x_0)} \U^{-p}(t,x) \d x \right) R^{-\alpha}  \tail(u(t);R,x_0),
\end{split}
\end{align}
where $c_1,c_2 > 0$ are constants possibly depending on $p$. This estimate will be the key step towards a suitable Caccioppoli-type inequality, which will allow us to deduce \eqref{eq:tail-inf}.
To prove \eqref{eq:tailkey} take $\tau \in C_c^{\infty}(B_{3R/4}(x_0))$ such that $0 \le \tau \le 1$ and $\tau \equiv 1$ in $B_{R/2}(x_0)$. We compute for fixed $t \in I_{R/2}^{\oplus}(t_0-R^{\alpha})$ (dropping the $t$-dependence of $u$ for simplicity)
\begin{align*}
-&\cE^{K(t)}_{(B_{R}(x_0)\times B_{R}(x_0))^{c}}(u,-\tau^2 \U^{-p}) \\
&\qquad = - 2 \int_{B_{R}(x_0)} \int_{B_{R}(x_0)^{c}} (u(x)-u(y))(-\tau^2 \U^{-p}(x)) K(t;x,y) \d y \d x\\
&\qquad = 2\int_{B_{R}(x_0)}\int_{B_{R}(x_0)^{c}} \left(\frac{u(x)}{\U^{p}(x)}-\frac{u(y)}{\U^{p}(x)}\right) \tau^2(x) K(t;x,y) \d y \d x\\
&\qquad \le 2\int_{B_{R}(x_0)} \tau^2(x) \U^{-p+1}(x) \left( \int_{B_{R}(x_0)^{c}} K(t;x,y) \d y\right) \d x\\
&\qquad \quad - 2\int_{B_{R}(x_0)} \frac{\tau^2(x)}{\U^{p}(x)} \left( \int_{B_{R}(x_0)^{c}} u(y) K(t;x,y) \d y\right) \d x\\
&\qquad \le cR^{-\alpha} \Vert \U^{-p+1}\Vert_{L^1(B_{R}(x_0))} - I,
\end{align*} 
where $I$ denotes the second summand in the second to last line and we used \eqref{eq:Kcomp}. 
We estimate $I$ as follows, using the lower bound in \eqref{eq:Kcomp}:
\begin{align*}
I &\ge c \left( \int_{B_{R/2}(x_0)} \U^{-p}(x) \d x \right) \inf_{x \in B_{R/2}(x_0)} \left( \int_{B_{R}(x_0)^{c}} u(y) K(t;x,y) \d y \right)\\
&\ge c \left( \int_{B_{R/2}(x_0)} \U^{-p}(x) \d x \right) R^{-\alpha} \tail(u;R,x_0).
\end{align*}
Moreover, we recall the following estimate (see (3.8) applied with $g(s) = s^{-p}$ in \cite{KaWe22a}, or (3.4) in \cite{FeKa13}):
\begin{align*}
-\cE^{K(t)}_{B_{R}(x_0) \times B_{R}(x_0)}(u,-\tau^2\U^{-p}) &\le - c_1\cE^{K(t)}_{B_{R}(x_0) \times B_{R}(x_0)}(\tau \U^{\frac{-p+1}{2}}, \tau \U^{\frac{-p+1}{2}}) \\
&\quad +  c_2 R^{-\alpha}\Vert \U^{-p+1}\Vert_{L^1(B_{R}(x_0))}.
\end{align*}
Its proof is based on an algebraic inequality, which plays the role of a nonlocal chain rule, and on \eqref{eq:Kcomp}.
Altogether, we obtain \eqref{eq:tailkey}.

Let us take a function $\chi \in C^1(\R)$ such that $\chi \equiv 1$ in $I_{R/2}^{\oplus}(t_0 - R^{\alpha})$, $0 \le \chi \le 1$, $\Vert \chi'\Vert_{\infty} \le c R^{-\alpha}$, and $\chi(t_0 - R^{\alpha} + (R/2)^{\alpha}) = 0$. We have
\begin{align*}
-& \int_{t}^{t_0 - R^{\alpha} + (\frac{R}{2})^{\alpha}} \int_{B_R(x_0)} \chi^2(s) \partial_t u(s,x) \tau^2(x) \U^{-p}(s,x) \d x \d s\\
&= -\frac{1}{1-p} \int_{t}^{t_0 - R^{\alpha} + (\frac{R}{2})^{\alpha}} \int_{B_R(x_0)} \chi^2(s) \partial_t (\U^{-p+1})(s,x) \tau^2(x) \d x \d s\\
&\ge \frac{1}{1-p} \int_{B_R(x_0)} \chi^2(t) \U^{-p+1}(t,x) \tau^2(x) \d x \\
&\quad - \frac{c R^{-\alpha}}{1-p} \int_{t}^{t_0 - R^{\alpha} + (\frac{R}{2})^{\alpha}} \int_{B_R(x_0)} \U^{-p+1}(s,x) \tau^2(x) \d x \d s.
\end{align*}

By combining the previous estimate with \eqref{eq:tailkey}, choosing $t = t_0 - R^{\alpha}$, and using that $u$ is a weak supersolution, we obtain:
\begin{align*}
\int_{I_{R}^{\oplus}(t_0 -R^{\alpha}) } & \chi^2(s) \left( \int_{B_{R/2}(x_0)} \U^{-p}(s,x) \d x \right) R^{-\alpha}  \tail(u(s);R,x_0) \d s\\
& \le - \int_{I_{R}^{\oplus}(t_0 -R^{\alpha}) } \int_{B_R(x_0)} \chi^2(s) \partial_t u(s,x) \tau^2(x) \U^{-p}(s,x) \d x \d s\\
&\quad + \int_{ I_{R}^{\oplus}(t_0 -R^{\alpha}) }\chi^2(s) \cE^{K(s)}(u(s),-\tau^2 \U^{-p}(s)) \d s \\
&\quad + cR^{-\alpha} \Vert \U^{-p+1} \Vert_{L^1(I_{R}^{\oplus}(t_0-R^{\alpha}) \times B_{R}(x_0))}\\
& \le cR^{-\alpha} \Vert \U^{-p+1} \Vert_{L^1(I_{R}^{\oplus}(t_0 -R^{\alpha}) \times B_{R}(x_0))}.
\end{align*}
As a consequence, using Jensen's inequality ($a \mapsto a^{-1}$ is convex):
\begin{align*}
&\dashint_{I_{R/2}^{\oplus}(t_0-R^{\alpha})} \hspace{-0.4cm}  \tail(u(t);R,x_0) \d t \\
&\qquad \le c \sup_{t \in I_{R/2}^{\oplus}(t_0-R^{\alpha})}\left( \dashint_{B_{R/2}(x_0)} \U^{-p}(t,x) \d x \right)^{-1} \left( \dashint_{I_{R}^{\oplus}(t_0-R^{\alpha}) \times B_{R}(x_0)} \U^{-p+1} \right)\\
&\qquad \le c \left(\sup_{t \in I_{R/2}^{\oplus}(t_0-R^{\alpha})} \dashint_{B_{R/2}(x_0)} \U^{p}(t,x) \d x \right) \left( \dashint_{I_{R}^{\oplus}(t_0-R^{\alpha}) \times B_{R}(x_0)} \U^{-p+1} \right).
\end{align*}
Next, let us recall from the proof of Moser iteration for small positive exponents (see e.g. the estimate right before (4.9) on p.38 in \cite{KaWe22a}), that for $p \in (0,\frac{d}{d+\alpha})$ it holds
\begin{align*}
\left(\sup_{t \in I_{R/2}^{\oplus}(t_0-R^{\alpha})} \dashint_{B_{R/2}(x_0)} \U^{p}(t,x) \d x \right) \le c \left( \dashint_{I_{R}^{\oplus}(t_0-R^{\alpha}) \times B_{R}(x_0)} \U^{p} \right).
\end{align*}
As a consequence, we obtain 
\begin{align} 
\label{eq:onthewaytoharnack}
\begin{split}
&\dashint_{I_{R/2}^{\oplus}(t_0-R^{\alpha})} \tail(u(t);R,x_0) \d t \\
&\qquad \le c \left( \dashint_{I_{R}^{\oplus}(t_0-R^{\alpha}) \times B_{R}(x_0)} \U^{p} \right)\left( \dashint_{I_{R}^{\oplus}(t_0-R^{\alpha}) \times B_{R}(x_0)} \U^{-p+1} \right) \\
&\qquad \le c \inf_{I^{\oplus}_{R}(t_0+R^\alpha) \times B_R(x_0)} u\,,
\end{split}
\end{align}
where we have applied the weak parabolic Harnack inequality \autoref{lemma:wPHI} to both factors in the previous estimate with appropriately changed radii (note that $p,-p+1 \in (0,1)$). Estimate \eqref{eq:onthewaytoharnack} can be proved for adjusted time-space cylinders, thus implying \eqref{eq:tail-inf}.
\end{proof}

\begin{remark}
Note that the estimate \eqref{eq:tailkey} can be improved as follows:
\begin{align*}
&\cE^{K(t)}_{B_{R}(x_0) \times B_{R}(x_0)} (\tau \U^{\frac{-p+1}{2}}(t), \tau \U^{\frac{-p+1}{2}}(t))\\
&\qquad\qquad\qquad +  c_2 \left( \int_{B_{R/2}(x_0)} \U^{-p}(t,x) \d x \right) R^{-\alpha}  \tail(u(t);R,x_0)\\
&\qquad\le c_1 |p-1|\cE^{K(t)}(u(t),-\tau^2 \U^{-p}(t))\\
&\qquad\qquad\qquad\quad + c_2(1 \vee |p-1|) R^{-\alpha} \Vert \U^{-p+1}(t)\Vert_{L^1(B_{R}(x_0))}.
\end{align*}
This is an improved version of the Caccioppoli-type inequality for small positive exponents (see Lemma 3.5 in \cite{KaWe22a}). When applied to weak supersolutions, the first term on the left hand side can be used to run a Moser iteration scheme, which is a main ingredient in the proof of the weak parabolic Harnack inequality. The second term is used to prove the improved tail estimate, as we have done in the aforementioned proof. In this sense, the above estimate captures local and nonlocal information at the same time, similar to the improved De Giorgi classes in \cite{Coz17}.
\end{remark}

\section{Proof of the main results}
\label{sec:main-proofs}
Having at hand both, the improved local boundedness estimate \autoref{thm:locbdL1tail} and the improved weak Harnack inequality \autoref{thm:improvedwHI-parabolic}, we are now in the position to prove our main results, i.e. \autoref{thm:fHI}, \autoref{cor:fPHI}, and \autoref{thm:HRE}. We provide a counterexample in \autoref{ex:counterexample} to Hölder estimates for weak solutions with merely finite $L^1$-tails.

\subsection{Full Harnack inequalities}

We start by explaining how to establish the full Harnack inequality \autoref{thm:fHI}:

\begin{proof}[Proof of \autoref{thm:fHI}]
Note that as a consequence of \eqref{eq:locbdL1tail} and Young's inequality, for every $\delta > 0$ there exists a constant $c(\delta) > 0$ such that
\begin{align*}
\sup_{I^{\ominus}_{R/2}(t_0) \times B_{R/2}(x_0)} \hspace{-0.2cm} u & \le \delta \hspace{-0.2cm} \sup_{I^{\ominus}_R(t_0) \times B_R(x_0)} \hspace{-0.2cm} u \\
&\quad + c(\delta) \dashint_{I^{\ominus}_R(t_0) \times B_R(x_0)} \hspace{-0.4cm} u(t,x) \d x \d t  + c\dashint_{I^{\ominus}_R(t_0)} \hspace{-0.2cm} \tail(u(t);R,x_0) \d t.
\end{align*}
Since $u$ locally bounded in $I \times \Omega$ due to \autoref{thm:locbdL1tail}, we can absorb the first term on the right hand side by using a classical covering argument as in the proof of Theorem 6.9 in \cite{Coz17} and \autoref{lemma:it} (see also the proof of Theorem 6.2 in \cite{KaWe22b}). This yields:
\begin{align}
\label{eq:proof-fHI-locbd}
\sup_{I^{\ominus}_{R/2}(t_0) \times B_{R/2}(x_0)} \hspace{-0.2cm} u \le c \dashint_{I^{\ominus}_R(t_0) \times B_R(x_0)} \hspace{-0.4cm} u(t,x) \d x \d t  + c\dashint_{I^{\ominus}_R(t_0)} \hspace{-0.2cm} \tail(u(t);R,x_0) \d t.
\end{align}
From here, the full Harnack inequality \eqref{eq:fPHI} follows after application of \autoref{thm:improvedwHI-parabolic} to the right hand side of \eqref{eq:proof-fHI-locbd}. Note that this step requires a modification of \eqref{eq:improvedwHI} where time-space cylinders are adjusted appropriately.

\end{proof}

Let us explain how to deduce \autoref{cor:fPHI} from \autoref{thm:fHI}:

\begin{proof}[Proof of \autoref{cor:fPHI}]
The proof follows by decomposing $u = u_+ - u_-$ and observing that $\partial_t u_+ - L_t u_+ = f$ in $I \times \Omega$, where $f = L_t u_- \in L^{1,\infty}_{t,x}(I_{4R}(t_0) \times B_{4R}(x_0))$ and for any $t \in I_{2R}(t_0)$:
\begin{align*}
\Vert f(t) \Vert_{L^{\infty}(B_{4R}(x_0))} \le c \tail(u_-(t); 4R, x_0).
\end{align*}
All proofs of this article can be generalized to equations with such source terms. The argument how to extend \autoref{thm:locbdL1tail} accordingly is described in the proof of \autoref{lemma:locbdsuptail}. In order to extend \autoref{thm:improvedwHI-parabolic}, we take $\U = u + \eps + \int_{t_0 - R^{\alpha}}^t \Vert f(s) \Vert_{L^{\infty}(B_{4R}(x_0))} \d s$ in the proof. This leads to a cancellation, similar to the proof of \autoref{thm:locbdL1tail} in \autoref{sec:lb_second-proof}. Note that \autoref{lemma:wPHI} can be generalized in the same way.
\end{proof}

\subsection{H\"older regularity estimates}

Finally, we give a proof of the parabolic H\"older regularity estimate \autoref{thm:HRE}. Moreover, we give a counterexample in case the tail is only assumed to be $L^1$ in time (see \autoref{ex:counterexample}).\\
The first step in the proof of \autoref{thm:HRE} is to establish a suitable growth lemma, reminiscent of Lemma 6.3 in \cite{Coz17} and Lemma 5.1 in \cite{KaWe22a}.

We introduce the following notation:
\begin{align*}
D(t_0,x_0,R) &= (t_0 - 2R^{\alpha} , t_0) \times B_{2R}(x_0),\\
\widehat{D}(t_0,x_0,R) &= (t_0 - 2 R^{\alpha} , t_0 ) \times B_{3R}(x_0),\\
D_{\ominus}(t_0,x_0,R) &= (t_0 - 2 R^{\alpha} , t_0 - 2R^{\alpha} + (R/2)^{\alpha}) \times B_{R/2}(x_0),\\
D_{\oplus}(t_0,x_0,R) &= (t_0 - (R/2)^{\alpha} , t_0) \times B_{R/2}(x_0).
\end{align*}

\begin{lemma}[growth lemma]
\label{lemma:growth}
Assume that $K$ satisfies \eqref{eq:Kcomp}. Let $H > 0$. Then, for every $R > 0$, $t_0 \in I$, $x_0 \in \Omega$ with $\widehat{D}(t_0,x_0,R) \Subset I \times \Omega$, and any $\sigma \in (0,1)$ there exists $\delta = \delta(d,\alpha_0,\lambda,\Lambda,\sigma) \in (0,1/8]$ such that if the following hold true
\begin{itemize}
\item[(i)] $u \ge 0$ in $\widehat{D}(t_0,x_0,R)$,
\item[(ii)] $u$ is a weak supersolution to $\partial_t u - L_t u = 0$ in $D(t_0,x_0,R)$,
\item[(iii)] $| D_{\ominus}(t_0,x_0,R) \cap \{ u \ge H \}| \ge \sigma |D_{\ominus}(t_0,x_0,R)|$,
\item[(iv)] $\dashint_{I^{\ominus}_{2^{1/\alpha}R}(t_0)} \tail(u_-(t);2R,x_0) \d t \le H\delta$,
\end{itemize}
then it holds $u \ge H\delta$ in $D_{\oplus}(t_0,x_0,R)$.
\end{lemma}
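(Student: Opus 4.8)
The plan is to prove the growth lemma by first reducing to the case where $u$ is a genuine (nonnegative) supersolution to $\partial_t u - L_t u = 0$ on $D(t_0,x_0,R)$, absorbing the negative tail from hypothesis (iv) into a shift. More precisely, I would set $\U = u + H\delta + \int_{t_0-2R^\alpha}^t \|f(s)\|_{L^\infty(B_{2R}(x_0))}\,\d s$, where $f = L_t u_-$ satisfies $\|f(t)\|_{L^\infty(B_{2R}(x_0))} \le c\,\tail(u_-(t);2R,x_0)$ as in the proof of \autoref{cor:fPHI}; by (iv) this integral is bounded by $cH\delta$ on $D(t_0,x_0,R)$. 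Then $\U$ is a nonnegative supersolution to $\partial_t \U - L_t \U = 0$ in $D(t_0,x_0,R)$ with $\U \ge H\delta$ throughout $\widehat D(t_0,x_0,R)$, which lets me treat the tail of $\U_-$ harmlessly (it only contributes values of size $\lesssim H\delta$ from outside $B_{3R}(x_0)$, plus finitely many from the annulus), and hypothesis (iii) transfers to $|D_\ominus \cap \{\U \ge H\}| \ge \sigma|D_\ominus|$.

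Next I would apply the weak parabolic Harnack inequality, in the form of \autoref{lemma:wPHI} (or rather the version with an $L^1$-tail, \autoref{thm:improvedwHI-parabolic}, in the version for supersolutions with a source term as discussed after \autoref{cor:fPHI}), to $\U$ on a suitable cylinder whose past part contains $D_\ominus(t_0,x_0,R)$ and whose future part is $D_\oplus(t_0,x_0,R)$. This gives
\begin{align*}
\inf_{D_\oplus(t_0,x_0,R)} \U \ge c \dashint_{D_\ominus(t_0,x_0,R)} \U(t,x)\,\d x\,\d t \ge c\,\frac{|D_\ominus \cap \{\U \ge H\}|}{|D_\ominus|}\,H \ge c\sigma H,
\end{align*}
using (iii) in the middle step. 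Undoing the shift, $\inf_{D_\oplus(t_0,x_0,R)} u \ge c\sigma H - cH\delta$, and choosing $\delta = \delta(d,\alpha_0,\lambda,\Lambda,\sigma)$ small enough that $c\sigma - c\delta \ge \delta$ (and $\delta \le 1/8$) yields $u \ge H\delta$ on $D_\oplus(t_0,x_0,R)$, as desired.

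The main technical point — and where I expect the real work to lie — is checking that the weak Harnack inequality can indeed be applied in the form needed here: the standard statement \eqref{eq:wPHI} requires \emph{global} nonnegativity of the supersolution, whereas here $\U$ is only nonnegative on $\widehat D(t_0,x_0,R)$. This is precisely why the tail hypothesis (iv) is imposed, and the right framework is the source-term version from \autoref{cor:fPHI}: one writes $\U_+$ as a supersolution with source controlled by $\tail(\U_-;\cdot)$, which by the construction of $\U$ is $\lesssim H\delta$ in time-average, and runs the Moser iteration with this harmless extra term. One must also be slightly careful with the geometry of the cylinders: $D_\ominus$ sits at the bottom of $D(t_0,x_0,R)$ and $D_\oplus$ at the top, so the time-lag in the parabolic weak Harnack inequality is respected, and the enlarged ball $B_{3R}(x_0)$ in $\widehat D$ provides the room needed for the Caccioppoli cutoffs and for the exterior-tail estimate. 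The remaining steps — the measure-theoretic lower bound from (iii), the absorption of constants, and the final choice of $\delta$ — are routine.
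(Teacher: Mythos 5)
Your proposal is correct and follows essentially the same route as the paper: the paper's proof invokes the weak Harnack inequality with an extra $L^1$-in-time tail term for $u_-$ on the right-hand side, obtained ``by the same argument as in the proof of Corollary~\ref{cor:fPHI}'', and then feeds in hypotheses (iii) and (iv) before choosing $\delta$; your construction of the shifted function $\U$ (with the integral of $\|L_t u_-\|_{L^\infty}$) is precisely the mechanism that that argument uses, so your more explicit write-up and the paper's terse version amount to the same proof. One small remark: the additive shift by $H\delta$ is redundant (nonnegativity of $\U$ in $\widehat D$ already follows from (i) and $g\ge 0$), and once the $\int\|L_s u_-\|_{L^\infty}\,\d s$ shift is in place, $\U_+$ is a genuine globally nonnegative supersolution with no remaining source --- the shift dominates $L_t\U_-$ since $\U_-\le u_-$ --- so there is no residual $\tail(\U_-)$ term to track, making your final paragraph's concern moot.
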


\begin{proof}[Proof of \autoref{lemma:growth}]
By the same argument as in the proof of \autoref{cor:fPHI}, we can prove the following weak Harnack inequality for $u$, using $(i)$ and $(ii)$:
\begin{align*}
\inf_{D_{\oplus}(t_0,x_0,R)} u \ge c_1 \left( \dashint_{D_{\ominus}(t_0,x_0,R)} \hspace{-0.2cm} u(t,x) \d x \d t \right) - c_2\dashint_{I^{\ominus}_{2^{1/\alpha}R}(t_0)} \hspace{-0.2cm} \tail(u_-(t);2R,x_0) \d t
\end{align*}
for some constants $c_1,c_2 > 0$.
By application of $(iii)$ and $(iv)$, this implies
\begin{align*}
\inf_{D_{\oplus}(t_0,x_0,R)} u \ge c_1 H \sigma - c_2 H\delta.
\end{align*}
This yields the desired result upon choosing $\delta = \frac{c_1 \sigma}{1 + c_2} \wedge \frac{1}{8}$.
\end{proof}

Now, we are in the position to establish the parabolic H\"older regularity estimate.  Since the proof is mostly a parabolic rewriting of the proof of Theorem 6.4 in \cite{Coz17}, we will only sketch the main arguments. The only difference to the proof in \cite{Coz17} is the treatment of the parabolic tail terms (see \eqref{eq:Holder-parabolic-tail}).

\begin{proof}[Proof of \autoref{thm:HRE}]
Let $\delta \in (0,1/8]$ be the constant from \autoref{lemma:growth}, given $\sigma = 1/2$. The idea of the proof is to construct two sequences $(M_j)$, and $(m_j)$ that are non-decreasing, and non-increasing, respectively, and to find a small parameter $\gamma \in (0,1)$ such that for every $j \in \N$
\begin{align}
\label{eq:HRE-goal}
m_j \le u \le M_j ~~ \text{ in } \widehat{D}(t_0,x_0,\nu^{-j} R), \quad \text{ and } M_j - m_j = L \nu^{-\gamma j},
\end{align}
where $\nu = 6 \vee 2^{1 + \frac{1}{\alpha_0}}$, and for a constant $C_0 > 0$ to be determined later, we set
\begin{align*}
L := C_0 \Vert u \Vert_{L^{\infty}(\widehat{D}(t_0,x_0,R))} + \left(\dashint_{I^{\ominus}_{2R}(t_0)} \tail(u(t);R/2,x_0)^{1+\eps} \d t \right)^{\frac{1}{1+\eps}}.
\end{align*}
Note that \eqref{eq:HRE-goal} yields the desired result by definition of $L$ and upon application of \autoref{thm:locbdL1tail} in order to estimate $\Vert u \Vert_{L^{\infty}(\widehat{D}(t_0,x_0,R))}$ from above.

We will choose $\gamma \in (0,1)$ such that $\gamma < \log_{\nu}\left(\frac{2}{2-\delta} \right)$, $\widehat{D}(t_0,x_0,\nu^{-1} R) \subset D_{\oplus}(t_0,x_0,R)$ and
\begin{align}
\label{eq:HRE-tailsmall}
\int_{3}^{\infty} \frac{s^{\gamma} - 1}{s^{1+\alpha_0}} \d s \le \frac{\delta}{8 (1 \vee |B_1|)}, \qquad \gamma \le \frac{\alpha \eps}{2(1+\eps)}.
\end{align}
Clearly, by setting $M_j = \nu^{-\gamma j}L/2$ and $m_j = -\nu^{-\gamma j}L/2$, \eqref{eq:HRE-goal} holds true for every $j \le j_0$ if we choose $C_0 \ge 2 \nu^{\gamma j_0}$. Later, we will choose $j_0$ appropriately. To show that \eqref{eq:HRE-goal} holds true for $j > j_0$, let us do a proof by induction over $j$ and assume that \eqref{eq:HRE-goal} holds true up to some fixed $j \ge j_0$. We will now explain how to construct $M_{j+1}$ and $m_{j+1}$. To do so, let us assume without loss of generality that
\begin{align}
\label{eq:HRE-cases}
|D_{\ominus}(t_0,x_0,\nu^{-j} R) \cap \{ u \ge m_j + (M_j - m_j)/2 \}| \ge \frac{1}{2} |D_{\ominus}(t_0,x_0,\nu^{-j} R)|
\end{align}
and define $v = u - m_j$. Note that if \eqref{eq:HRE-cases} does not hold true, we would proceed analogously, but define $v = M_j - u$. We set $H := (M_j - m_j)/2 = \nu^{-\gamma j} L /2$. Our goal is to apply the growth lemma (see \autoref{lemma:growth}) with $u := v$, $\sigma = 1/2$, and $R := \nu^{-j}R$ in order to deduce that $v \ge \delta H$ in $D_{\oplus}(t_0,x_0,\nu^{-j}R)$, which implies that $u \ge m_j + \nu^{-\gamma j}\delta L/2$ in $D_{\oplus}(t_0,x_0,\nu^{-j}R) \supset \widehat{D}(t_0,x_0,\nu^{-(j+1)}R)$. By defining $M_{j+1} = M_j$ and $m_{j+1} = m_j + \nu^{-\gamma j}\delta L/2$, we verify \eqref{eq:HRE-goal} for $m_{j+1}$ and $M_{j+1}$, as desired.\\
It remains to verify the assumptions of \autoref{lemma:growth}. Note that $(i)$, $(ii)$, and $(iii)$ hold true by construction. In order to verify $(iv)$, one proceeds exactly as in the proof of Theorem 4.2 in \cite{Coz17}, using that $v(t,y) \ge -2H \left( \left( \frac{|y|}{\nu^{-j}R} \right)^{\gamma} - 1\right)$ for $y \in B_{3R} \setminus B_{3\nu^{-j}R}$ and $v(t,y) \ge - |u(t,y)| - L/2$ in $\R^n \setminus B_{3R}$ for every $t \in I_{2^{1/\alpha}\nu^{-j} R}^{\ominus}(t_0)$. Therefore,
\begin{align}
\label{eq:Holder-parabolic-tail}
\begin{split}
& \Bigg(\dashint_{I^{\ominus}_{2^{1/\alpha} \nu^{-j}R}} \tail(v_-(t);2 \nu^{-j}R,x_0)^{1+\eps} \d t \Bigg)^{\frac{1}{1+\eps}} \\
&\qquad\qquad\le H (\nu^{-j}R)^{-\alpha} \int_{B_{3R} \setminus B_{3 \nu^{-j}R}} \left( \left( \frac{|y|}{\nu^{-j}R} \right)^{\gamma} - 1\right) |y|^{-d-\alpha} \d y\\
&\qquad\qquad\quad + c_1 \nu^{-j \alpha} \left(\dashint_{I^{\ominus}_{2^{1/\alpha} \nu^{-j}R}} \left( \tail(u(t);3R,x_0) + L \right)^{1+\eps}\d t \right)^{\frac{1}{1+\eps}}\\
&\qquad\qquad=: J_1 + J_2
\end{split}
\end{align}
for some $c_1 > 0$.
By \eqref{eq:HRE-tailsmall} and a change of variables, we see that $J_1 \le \delta H/2$. Moreover, by definition $J_2 \le c_1 \nu^{-\frac{j\alpha \eps}{1+\eps}} L + c_1 \nu^{-j\alpha} L =  2c_1 \left(\nu^{-\frac{j\alpha \eps}{1+\eps}} + \nu^{-j\alpha}\right)\nu^{\gamma j} H$. \\
By the choice of $\gamma$, and in particular by \eqref{eq:HRE-tailsmall} and upon choosing $j_0 \in \N$ so large that $J_2 \le \delta H/2$ (for more details on the underlying computation, we refer once more to \cite{Coz17}), this implies that $\tail(v(t);2 \nu^{-j}R,x_0) \le \delta$ for every $t \in I_{2^{1/\alpha} \nu^{-j} R}^{\ominus}(t_0)$. Since this yields $(iii)$, we are now able to apply \autoref{lemma:growth}, as desired, and to conclude the proof.
\end{proof}

We complete this section by providing a counterexample to the parabolic H\"older regularity estimate if the assumption $\tail(u) \in L^{1+\eps}_{loc}(I)$ is dropped. This proves the sharpness of the condition in \autoref{thm:HRE} in the scale of $L^p$-spaces.

\begin{example}
\label{ex:counterexample}
Let us consider a solution $u$ to
\begin{align*}
\begin{cases}
\partial_t u + (-\Delta)^{\alpha/2} u &= 0 ~~ \text{ in } (-1,1) \times B_1,\\
u(-1) &\equiv 0 ~~ \text{ in } \R^d,\\
u &\equiv g ~~ \text{ in } (-1,1) \times \R^d \setminus B_1,
\end{cases}
\end{align*}
where $g(t) \equiv 0$ for $t \in (-1,0)$ and $g(t,x) = \delta f(t) + f'(t) \mathbbm{1}_{B_3 \setminus B_2}(x)$ for $t \in [0,1)$.\\
We will choose $f : (-1,1) \to \R$ to be a function satisfying $f \not\in C^{\gamma}_{loc}(-1,1)$, $f+f' \in L^1_{loc}(-1,1) \setminus L^{1+\gamma}_{loc}(-1,1)$ for every $\gamma > 0$ and $f'(t) \ge 0$ for any $t \in [0,1)$. Moreover, we will choose $\delta > 0$ so small that $\delta + (-\Delta)^{\alpha/2} \mathbbm{1}_{B_3 \setminus B_2}(x) \le 0$ for every $x \in B_1$, which implies that $g$ is a subsolution to $\partial_t g + (-\Delta)^{\alpha/2} g = 0$ in $(-1,1) \times B_1$ since
\begin{align*}
\partial_t g + (-\Delta)^{\alpha/2} g = \delta f' + f'' \mathbbm{1}_{B_3 \setminus B_2} + f' (-\Delta)^{\alpha/2}  \mathbbm{1}_{B_3 \setminus B_2} \le 0~~ \text{ in } [0,1) \times B_1.
\end{align*}
As a consequence, by the comparison principle, $u(t) \equiv 0$ for $t \in (-1,0]$ and $u(t) \ge g(t)$ in $B_1$ for any $t \ge 0$. By the choice of $f$, this implies that $u$ is not H\"older continuous. Moreover, note that $\tail(u(t);1,0) \asymp f(t) + f'(t)$ and is therefore in $L^1_{loc}(-1,1) \setminus L^{1+\gamma}_{loc}(-1,1)$ for every $\gamma > 0$. A possible choice of a function $f$ satisfying the aforementioned criteria is $f(t) = (\log t)^{-2}$.
\end{example}

\section{Extension of our techniques} 
\label{sec:ext}

In this section we explain how the techniques developed in this article can be modified to treat jumping kernels that do not satisfy the coercivity assumption \eqref{eq:Kcomp}. Let us split \eqref{eq:Kcomp} into two parts:
\begin{align}
\label{eq:Klower}\tag{$K_{\ge}$}
\lambda (2-\alpha) |x-y|^{-d-\alpha} \le & K(t;x,y),\\
\label{eq:Kupper}\tag{$K_{\le}$}
&K(t;x,y) \le \Lambda (2-\alpha) |x-y|^{-d-\alpha}
\end{align}
for every $t \in I$ and $x,y \in \R^d$.
In fact, regularity results for variational solutions to nonlocal equations have been obtained in several works, where the jumping kernel does not satisfy the pointwise bounds of \eqref{eq:Kcomp}.

The results in this section are split into two parts. First, we prove the full parabolic Harnack inequality (see \autoref{thm:fHI}) for a large class of operators with jumping kernels that do not satisfy \eqref{eq:Kcomp}. Second, we investigate singular nonlocal operators, i.e. operators possessing a jumping measure that is not absolutely continuous with respect to the Lebesgue measure. The parabolic Harnack inequality is known to fail for singular nonlocal operators (see \cite{BaCh10}). However, using the techniques of this article,  we are able to establish an estimate reminiscent of the parabolic Harnack inequality  involving suitable tail terms.

\subsection{Full parabolic Harnack inequalities under weaker assumptions}

In this section we establish the full parabolic Harnack inequality under substantially weaker assumptions than \eqref{eq:Kcomp}. The main result of this section is \autoref{thm:fHI_sharp-ass}.

A typical relaxation of the lower bound \eqref{eq:Klower}, is to assume that a Poincar\'e inequality and a Sobolev inequality hold true, i.e. that there is $\lambda > 0$ such that for every ball $B_{r+\rho} \subset \Omega$ with $0 < \rho < r$, $t \in I$, and $v \in L^2(B_r)$:
\begin{align}
\label{eq:Poinc}\tag{$\text{Poinc}$}
\lambda \int_{B_r} \left(v(x) - [v]_{B_r}\right)^2 \d x &\le r^{\alpha} \cE^{K(t)}_{B_r}(v,v),\\
\label{eq:Sob}\tag{$\text{Sob}$}
\lambda \Vert v^2 \Vert_{L^{\frac{d}{d-\alpha}}(B_r)} &\le \cE^{K(t)}_{B_{r+\rho}}(v,v) + \rho^{-\alpha}\Vert v^2\Vert_{L^{1}(B_{r+\rho})},
\end{align}
where $[v]_{B_r} = \dashint_{B_r} v(x) \d x$.

\begin{remark}
Note that \eqref{eq:Poinc} and \eqref{eq:Sob} are satisfied for the fractional Sobolev seminorm. In particular, a sufficient condition for \eqref{eq:Poinc} and \eqref{eq:Sob}  to hold true is the following:
\begin{align}
\label{eq:coercive}
\cE^{K(t)}_{B_r}(v,v) \ge c [v]_{H^{\alpha/2}(B_r)}^2 ~~ \forall t \in I, ~~ \forall r > 0.
\end{align}
For the investigation of the coercivity assumption \eqref{eq:coercive} and the discussion of suitable examples, we refer to \cite{BKS19}, \cite{ChSi20}, \cite{DyKa20}, \cite{KaWe22a}, and \cite{CKW23}.
\end{remark}

The improved local boundedness estimate holds true under the following assumptions:

\begin{lemma}
\label{lemma:locbd-gen}
Assume that $K$ satisfies \eqref{eq:Kupper}, \eqref{eq:Poinc}, and \eqref{eq:Sob}. Then, the improved local boundedness estimate \eqref{eq:locbdL1tail} holds true for any weak subsolution $u$ to $\partial_t u - L_t u = 0$ in $I \times \Omega$ for every $R > 0$, $t_0 \in I$, $x_0 \in \Omega$ with $I_{4R}(t_0) \times B_{4R}(x_0) \subset I \times \Omega$. In particular, $u_+ \in L^{\infty}(I^{\ominus}_{R/2}(t_0) \times B_{R/2}(x_0))$. 
\end{lemma}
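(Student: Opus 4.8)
The plan is to re-run the two proofs of \autoref{thm:locbdL1tail} given in \autoref{sec:lb}, checking that \eqref{eq:Kcomp} was only used in three specific ways, each of which is now supplied by one of the weaker hypotheses. First, the \emph{upper} bound in \eqref{eq:Kcomp} was used to dominate the off-diagonal kernel integrals, i.e. to pass from $\int_{\R^d \setminus B} u_+(y) K(t;x,y)\,\d y$ to a constant times $(2-\alpha)\int_{\R^d \setminus B} u_+(y)|x_0-y|^{-d-\alpha}\,\d y$, and hence to the tail; this is exactly \eqref{eq:Kupper} and is unaffected. Second, the \emph{lower} bound was used only inside the Caccioppoli inequality and inside the De Giorgi / Moser machinery, namely to control an $L^2$ (or $L^{2d/(d-\alpha)}$) norm of the truncation by the energy $\cE^{K(t)}$; this usage is precisely what \eqref{eq:Poinc} and \eqref{eq:Sob} are designed to replace. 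Third, \eqref{eq:Kcomp} enters \autoref{lemma:intermediate-tail}, which I also need to re-derive under the weaker assumptions.

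Concretely, I would proceed as follows. Step 1: observe that \autoref{lemma:intermediate-tail} goes through verbatim, since its proof uses only the Caccioppoli inequality (which holds under \eqref{eq:Kupper}+\eqref{eq:Poinc}, cf. the references to \cite{KaWe22b},\cite{KaWe23},\cite{Str19b} whose Caccioppoli estimates are already stated in this generality) together with the upper bound \eqref{eq:Kupper} to produce the tail term, plus the elementary iteration \autoref{lemma:it}. Step 2: re-establish \autoref{lemma:locbdsuptail} (local boundedness with $L^\infty$-tail and an $L^{\mu,\theta}$ source term) under \eqref{eq:Kupper}, \eqref{eq:Poinc}, \eqref{eq:Sob}; this is the standard De Giorgi iteration, and the point is that the Sobolev inequality \eqref{eq:Sob} furnishes the gain of integrability on the level sets while \eqref{eq:Poinc} is used (if at all) for the energy–average comparison, with \eqref{eq:Kupper} controlling the nonlocal contributions and the tail — the references \cite{KaWe22b},\cite{KaWe23},\cite{Str19b} already carry out exactly this argument assuming (essentially) \eqref{eq:Sob}, so I would simply cite them. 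The case $(\mu,\theta)=(1,\infty)$ is handled by the same subtraction trick $v = u - \int_{t_0}^t \Vert f_+(s)\Vert_{L^\infty}\,\d s$ as in the proof of \autoref{lemma:locbdsuptail}. Step 3: copy the proof of \autoref{thm:locbdL1tail} from \autoref{sec:lb_first-proof} word for word: decompose $u = \eta u + (1-\eta)u$, note $\partial_t(\eta u) - L_t(\eta u) = L_t((1-\eta)u)$, bound the source $L_t((1-\eta)u)$ by a tail using \eqref{eq:Kupper} exactly as in \eqref{eq:festimate}, apply the now-available \autoref{lemma:locbdsuptail} to $\eta u$, and absorb the intermediate tail via \autoref{lemma:intermediate-tail}. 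Alternatively, the second proof (\autoref{sec:lb_second-proof}) also carries over, since the tail split \eqref{eq:tail-split} uses only \eqref{eq:Kupper} and the modified level-set De Giorgi iteration needs only \eqref{eq:Sob} and \eqref{eq:Poinc}.

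The main obstacle is purely bookkeeping rather than conceptual: one must be sure that \emph{every} invocation of \eqref{eq:Kcomp} in the two proofs and in the cited Caccioppoli/De Giorgi lemmas is either an upper-bound use (covered by \eqref{eq:Kupper}) or an energy-coercivity use of the very specific type encapsulated by \eqref{eq:Poinc} and \eqref{eq:Sob} — in particular one must double-check that no step secretly needs a pointwise \emph{lower} bound on the kernel, e.g. to bound a cross term $\cE^{K(t)}_{B\times B^c}$ from below, which would not be available. In the proofs as written this does not happen (the only lower-bound use is inside the energy-norm comparisons), so the argument closes; I would therefore state the result and say the proof is identical to that of \autoref{thm:locbdL1tail}, with \eqref{eq:Kcomp} replaced throughout by \eqref{eq:Kupper} together with \eqref{eq:Poinc}, \eqref{eq:Sob}, and with \autoref{lemma:locbdsuptail} and \autoref{lemma:intermediate-tail} invoked in their correspondingly generalized forms.
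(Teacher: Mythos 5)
Your proposal is correct and follows the same route as the paper: cite the references \cite{KaWe22b}, \cite{KaWe23} for the fact that \autoref{lemma:locbdsuptail} already holds under \eqref{eq:Kupper}, \eqref{eq:Poinc}, \eqref{eq:Sob}, observe that \autoref{lemma:intermediate-tail} survives the weakening, and re-run the localization proof from \autoref{sec:lb_first-proof}. One small sharpening: the paper notes that \autoref{lemma:intermediate-tail} in fact needs \emph{only} \eqref{eq:Kupper} (the Caccioppoli estimate there bounds $\sup_t\int u_+^2$ without invoking energy coercivity, so the Poincaré and Sobolev inequalities are not used at that step), whereas you attribute \eqref{eq:Poinc} to it as well; this is an overclaim but harmless, since all three hypotheses are assumed anyway.
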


\begin{proof}
In Theorem 3.6 in \cite{KaWe22b} (see also Lemma 4.2 in \cite{KaWe23}) we have proved that the local boundedness estimate with $L^\infty$-tail, i.e. \autoref{lemma:locbdsuptail} holds true under \eqref{eq:Kupper}, \eqref{eq:Poinc}, and \eqref{eq:Sob}. Moreover, note that the proof of \autoref{lemma:intermediate-tail} only relies on \eqref{eq:Kupper} and therefore still remains true. Since also the rest of the proof of \autoref{thm:locbdL1tail} in \autoref{sec:lb_first-proof} only relies on \eqref{eq:Kupper}, we conclude that \eqref{eq:locbdL1tail} remains true. 
\end{proof}

Moreover, instead of the upper bound \eqref{eq:Kupper} it is possible to assume that there is $\Lambda > 0$ such that for every $\rho > 0$
\begin{align}
\label{eq:cutoff}\tag{$\text{Cutoff}$}
\sup_{(t,x) \in I \times \Omega} \int_{\R^d \setminus B_{\rho}(x)} K(t;x,y) \d y \le \Lambda \rho^{-\alpha},
\end{align}

and still obtain the weak Harnack inequality (see \autoref{lemma:wPHI}).

\begin{lemma}
\label{lemma:wPHI-gen}
Assume that $K$ satisfies \eqref{eq:cutoff}, \eqref{eq:Poinc}, and \eqref{eq:Sob}. Then, the weak parabolic Harnack inequality \eqref{eq:wPHI} holds true for any globally nonnegative supersolution $u$ to $\partial_t u - L_t u = 0$ in $I \times \Omega$.
\end{lemma}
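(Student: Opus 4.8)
The plan is to revisit the proof of the weak parabolic Harnack inequality, e.g. in \cite{FeKa13}, \cite{KaWe22a}, or \cite{CKW23}, and to verify that it goes through once the pointwise bounds \eqref{eq:Kcomp} are replaced by \eqref{eq:cutoff}, \eqref{eq:Poinc}, and \eqref{eq:Sob}. Recall that the proof of \autoref{lemma:wPHI} for a globally nonnegative supersolution $u$ is a Moser iteration and decomposes into four ingredients: (i) Caccioppoli-type energy estimates obtained by testing \eqref{eq:sol-concept} with admissible test functions of the form $-\tau^2 \U^{-p}$, where $\U = u + \eps$ and $p > 0$; (ii) a Moser iteration for small positive exponents that upgrades $L^{p_0}$-information on $\U$ to $L^{p_1}$-information by iterating the Sobolev inequality; (iii) a logarithmic estimate controlling the parabolic oscillation of $\log \U$; and (iv) a Bombieri--Giusti type lemma gluing together small positive and small negative exponents. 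I would go through these in order and record which hypothesis each step uses.

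First I would observe that in step (i) the kernel enters only through two mechanisms: the elementary algebraic inequality serving as a nonlocal chain rule (which requires nothing but $K \ge 0$), and the long-range interaction and tail terms $\iint_{B_r \times B_r^c}$ produced by the cutoff $\tau$. Since $u \ge 0$ globally, the part of the latter carrying $-u(y)$ has a favourable sign, so only the upper side of $K$ is needed, and after estimating $\int_{\R^d \setminus B_\rho(x)} K(t;x,y)\,\d y$ these terms are controlled precisely by \eqref{eq:cutoff}. The Moser step (ii) then feeds the Dirichlet-form energy produced in (i) directly into \eqref{eq:Sob}, so no pointwise lower bound on $K$ is required there. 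For the logarithmic estimate (iii) one tests with $-\tau^2 \U^{-1}$; this yields control of $\cE^{K(t)}_{B_r}(\log \U, \log \U)$, and here \eqref{eq:Poinc} is exactly what converts this energy into an $L^2$-bound for $\log \U - [\log \U]_{B_r}$, which, combined with the time-derivative term, produces the desired parabolic BMO-type estimate. Step (iv) is purely measure-theoretic and uses no property of $K$; the time-lag and the passage to the cylinders in \eqref{eq:wPHI} are then obtained by a geometric chaining argument, also independent of $K$.

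The point requiring the most care is step (iii): in the classical argument under \eqref{eq:Kcomp} it is the lower bound \eqref{eq:Klower} that lets one pass from the Dirichlet-form energy of $\log \U$ to a genuine oscillation bound, and this role is now played entirely by \eqref{eq:Poinc}; one must apply the Poincar\'e inequality on the correct balls, with cutoffs arranged so that the constants stay uniform in $\alpha$ and, in particular, robust as $\alpha \nearrow 2$. A second place where the weaker hypothesis is genuinely exploited is the Moser iteration with negative exponents, where the interaction terms $\iint_{B_r \times B_r^c}$ generated by $\tau$ have to be absorbed using only \eqref{eq:cutoff} rather than a pointwise upper bound for $K$; this is routine but worth spelling out once. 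Once these verifications are in place, the conclusion \eqref{eq:wPHI} follows as in \cite{FeKa13}, \cite{KaWe22a}; note that \autoref{lemma:locbd-gen} already provides the companion local boundedness estimate under the same hypotheses, so no further assumption on $K$ is needed.
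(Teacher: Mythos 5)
Your proposal is correct and follows the same route the paper relies on; the paper's own proof is simply a citation of Theorem~1.1 in \cite{KaWe22a} (see also \cite{FeKa13}), and your sketch accurately unpacks the structure of that proof, correctly identifying where each of \eqref{eq:cutoff}, \eqref{eq:Poinc}, and \eqref{eq:Sob} enters (tail/Caccioppoli estimates, logarithmic estimate, and Sobolev/Moser iteration, respectively) and why the global nonnegativity of $u$ makes the pointwise lower bound on $K$ dispensable in the interaction terms.
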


\begin{proof}
This result is proved in Theorem 1.1 in \cite{KaWe22a}. See also \cite{FeKa13}.
\end{proof}

An important difficulty when dropping the pointwise lower bound \eqref{eq:Klower} is that it does not suffice to consider $\tail(u;R,x_0)$, as before. In fact, we need to define another tail term which is given with respect to the jumping kernel $K$ itself:
\begin{align*}
\tail_{K(t)}(v;r,R,x_0) = \sup_{x \in B_{r}(x_0)} \int_{\R^d \setminus B_{R}(x_0)} |v(y)| K(t;x,y) \d y.
\end{align*}
Note that $\tail_{K(t)}$ was already introduced in \cite{KaWe22b}. Clearly, because $K(x,y)$ and $|x-y|^{-d-\alpha}$ are no longer comparable, also $\tail$ and $\tail_{K(t)}$ are not comparable.

In order to prove the full parabolic Harnack inequality, involving an estimate for $\tail_{K(t)}$, and therefore also the full parabolic Harnack inequality, we additionally need to impose the following assumption, which states that $K$ does not allow for oscillating long jumps: Assume that there is $c > 0$ such that for every $x,y \in \R^d$, $t \in I$, and every $r \le \left(\frac{1}{4} \wedge \frac{\vert x-y \vert}{4}\right)$ with $B_r(x) \subset \Omega$ it holds
\begin{align}
\label{UJS}\tag{$\text{UJS}$}
K(t;x,y) &\le c \dashint_{B_r(x)} K(t;z,y) \d z.
\end{align}
The assumption \eqref{UJS} is common in the literature about Harnack inequalities for nonlocal operators with general kernels (see e.g. \cite{Sch20}, \cite{CKW20}, \cite{KaWe22b}). It was introduced for the first time in \cite{BBK09}.

Now, we are in the position to state and prove the main result of this section, namely the following improvement of the full parabolic Harnack inequality (see \autoref{thm:fHI}).

\begin{theorem}
\label{thm:fHI_sharp-ass}
Assume that $K$ satisfies \eqref{eq:Kupper}, \eqref{eq:Poinc}, \eqref{eq:Sob}, and \eqref{UJS}. Then, the full parabolic Harnack inequality \eqref{eq:fPHI} holds true for any globally nonnegative solution $u$ to $\partial_t u - L_t u = 0$ in $I \times \Omega$.
\end{theorem}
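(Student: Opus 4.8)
The plan is to mirror the proof of \autoref{thm:fHI}, replacing each of its three ingredients by a version valid under \eqref{eq:Kupper}, \eqref{eq:Poinc}, \eqref{eq:Sob}, \eqref{UJS}, and carrying the kernel‑adapted tail $\tail_{K(t)}$ through the whole argument. For the local boundedness step, \autoref{lemma:locbd-gen} already yields \eqref{eq:locbdL1tail} under \eqref{eq:Kupper}, \eqref{eq:Poinc}, \eqref{eq:Sob}; moreover, inspection of the localization proof of \autoref{sec:lb_first-proof} shows that the source term $f=L_t((1-\eta)u)$ satisfies $\Vert f_+(t)\Vert_{L^\infty(B_{R/2}(x_0))}\le\tail_{K(t)}(u_+(t);R/2,R,x_0)$, so that for globally nonnegative solutions one in fact obtains the sharper estimate
\begin{align*}
\sup_{I^\ominus_{R/2}(t_0)\times B_{R/2}(x_0)}u\le c\Big(\dashint_{I^\ominus_R(t_0)\times B_R(x_0)}u^2\Big)^{1/2}+c\dashint_{I^\ominus_R(t_0)}\tail_{K(t)}(u_+(t);R/2,R,x_0)\,\d t.
\end{align*}
Since \eqref{eq:Kupper} implies \eqref{eq:cutoff}, \autoref{lemma:wPHI-gen} provides the plain weak Harnack inequality \eqref{eq:wPHI}. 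Hence \eqref{eq:fPHI} will follow exactly as in the proof of \autoref{thm:fHI} once the genuinely new ingredient is in place, namely a weak Harnack inequality with $L^1$‑$\tail_{K(t)}$: for globally nonnegative supersolutions,
\begin{align*}
\dashint_{I^\ominus_R(t_0-R^\alpha)}\tail_{K(t)}(u(t);R/2,R,x_0)\,\d t\le c\inf_{I^\oplus_R(t_0)\times B_R(x_0)}u.
\end{align*}

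To prove this I would follow the scheme of \autoref{thm:improvedwHI-parabolic}: test the weak formulation with $-\tau^2\U^{-p}$ (with $\U=u+\eps$, $p\in(0,1)$ small, $\tau$ a cutoff between $B_{R/2}(x_0)$ and $B_{3R/4}(x_0)$) and rederive the analogue of \eqref{eq:tailkey}. The two places where the two‑sided bound \eqref{eq:Kcomp} was used must be revisited. The upper‑bound part survives: the estimate $\int_{B_R(x_0)^c}K(t;x,y)\,\d y\le cR^{-\alpha}$ for $x\in B_{3R/4}(x_0)$ is now a consequence of \eqref{eq:cutoff}, and the Caccioppoli‑type inequality for small positive exponents on $B_R(x_0)\times B_R(x_0)$ from \cite{KaWe22a} needs only \eqref{eq:Kupper}. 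The lower‑bound part is where \eqref{UJS} enters: the term $I=2\int_{B_R}\tau^2\U^{-p}\big(\int_{B_R(x_0)^c}u(y)K(t;x,y)\,\d y\big)\,\d x$ is now only bounded below by $c\big(\int_{B_{R/2}(x_0)}\U^{-p}\big)\inf_{x\in B_{R/2}(x_0)}\int_{B_R(x_0)^c}u(y)K(t;x,y)\,\d y$; applying \eqref{UJS} to average $K(t;\cdot,y)$ over small balls around the base point, one compares this infimum with the supremum defining $\tail_{K(t)}(u(t);R/2,R,x_0)$, so that $I\ge c\big(\int_{B_{R/2}(x_0)}\U^{-p}\big)\tail_{K(t)}(u(t);R/2,R,x_0)$ up to the usual $R$‑scaling. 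With this replacement for \eqref{eq:tailkey}, the time integration against $\chi^2$, the use of the supersolution property, Jensen's inequality and the Moser iteration for small positive exponents go through as before, using only \eqref{eq:Poinc}, \eqref{eq:Sob}, \eqref{eq:cutoff}; two applications of \autoref{lemma:wPHI-gen} (with exponents $p$ and $1-p$) then close the estimate, as in \eqref{eq:onthewaytoharnack}.

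Finally, I would assemble the pieces just as in the proof of \autoref{thm:fHI}: insert the $L^1$‑$\tail_{K(t)}$ weak Harnack inequality and \autoref{lemma:wPHI-gen} into the local boundedness estimate above on suitably enlarged cylinders, use Young's inequality to convert the $L^2$‑average into $\delta\sup u+c(\delta)$‑average, and absorb the $\sup u$‑term via the covering argument of \cite{Coz17}, \cite{KaWe22b} together with \autoref{lemma:it}; this gives \eqref{eq:fPHI}. The main obstacle is the \eqref{UJS}‑step in the previous paragraph: one must pass from the infimum over $x\in B_{R/2}(x_0)$ of $\int_{B_R(x_0)^c}u(y)K(t;x,y)\,\d y$ — the quantity the test function naturally produces — to the supremum defining $\tail_{K(t)}$, choosing the radii so that the averaging balls $B_r(x)$ in \eqref{UJS} stay inside $\Omega$ and the constants remain robust as $\alpha\nearrow2$. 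Everything else is a bookkeeping adaptation of \autoref{thm:improvedwHI-parabolic} and \autoref{thm:fHI} to the kernel‑adapted tail.
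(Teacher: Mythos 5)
Your overall strategy---local boundedness with a kernel-adapted $L^1$-tail, a tail estimate for supersolutions, the weak Harnack from \autoref{lemma:wPHI-gen}, and the covering/iteration argument from \cite{Coz17}---is the same as the paper's. The gap is in the tail estimate: you assert a ``pure'' $L^1$-$\tail_{K(t)}$ weak Harnack inequality,
\[
\dashint_{I^{\ominus}_{R}(t_0-R^{\alpha})}\tail_{K(t)}(u(t);R/2,R,x_0)\,\d t\le c\inf_{I^{\oplus}_{R}(t_0)\times B_R(x_0)}u,
\]
and try to reach it from the lower bound $I\ge c\bigl(\int_{B_{R/2}}\U^{-p}\bigr)\inf_{x\in B_{R/2}}\int_{B_R^c}u(y)K(t;x,y)\,\d y$ by invoking \eqref{UJS} to ``compare this infimum with the supremum defining $\tail_{K(t)}$.'' That step is not available: \eqref{UJS} gives only the one-sided inequality $K(t;z,y)\le c\,\dashint_{B_r(z)}K(t;x,y)\,\d x$, which bounds $\int_{B_R^c}u(y)K(t;z,y)\,\d y$ from above by the average of $x\mapsto\int_{B_R^c}u(y)K(t;x,y)\,\d y$ over a \emph{small} ball $B_r(z)$; it does not make the $\sup$ and $\inf$ of this function over $B_{R/2}$ comparable. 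The correct use of \eqref{UJS} in the Caccioppoli estimate forces you to localize to a small ball $B_{R/8}(z)$ and to pull out $\inf_{B_{R/8}(z)}\U^{-p}\ge\inf_{B_R}\U^{-p}$, so the version of \eqref{eq:tailkey} one actually gets is
\[
\cE^{K(t)}(u,-\tau^2\U^{-p})\ge-c_1R^{-\alpha}\Vert\U^{-p+1}\Vert_{L^1(B_R)}+c_2R^{d}\Bigl(\inf_{B_R}\U^{-p}\Bigr)\tail_{K(t)}(u;3R/4,R,x_0).
\]
After integrating in time and dividing, this produces $\sup_{B_R}\U^{p}$ on the right, not the average $\dashint\U^{p}$ that the Moser--iteration step in \autoref{thm:improvedwHI-parabolic} needs. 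Consequently the tail can only be bounded by
\[
\dashint\tail_{K(t)}(u(t);3R/4,R,x_0)\,\d t\le\delta\sup_{I_{R/2}^{\oplus}(t_0-R^\alpha)\times B_R(x_0)}\U+c(\delta)\inf_{I^{\oplus}_{R}(t_0)\times B_R(x_0)}\U,
\]
and the extra $\delta\sup u$ term persists. The paper's proof does exactly this: it keeps the $\delta\sup u$ in both the tail estimate and the local boundedness step (their \eqref{eq:key_improved-lb} also carries a $\delta\sup_t\dashint u_+$ term, obtained via the modified level-set truncation of \autoref{sec:lb_second-proof} rather than the localization route you propose), and only absorbs these $\sup u$ contributions at the very end via the covering argument and \autoref{lemma:it}. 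So while the assembly of the pieces in your last paragraph is right, your stated intermediate lemma is unprovable by this method and must be replaced by its $\delta\sup+c(\delta)\inf$ weakening.
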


\begin{remark}
Note that the above theorem is almost sharp in the sense of the equivalent characterization of the full parabolic Harnack inequality in \cite{CKW20} for time-homogeneous kernels on doubling metric spaces. Indeed, using a probabilistic approach, in \cite{CKW20} it is proved that a parabolic Harnack inequality is equivalent to \eqref{eq:Kupper}, \eqref{eq:Poinc}, and \eqref{UJS}. We conjecture that it is also possible to get rid of the assumption \eqref{eq:Sob} in the proof of \autoref{thm:fHI_sharp-ass} by application of energy methods. In fact, the only missing ingredient is the proof of the weak parabolic Harnack inequality \eqref{eq:wPHI} without using \eqref{eq:Sob}. We believe this to be possible using an approach via the De Giorgi iteration technique and the Faber-Krahn inequality (which follows from \eqref{eq:Poinc}).
\end{remark}

\begin{proof}[Proof of \autoref{thm:fHI_sharp-ass}]
As before, the proof relies on an improved local boundedness estimate and an improved weak Harnack inequality. However, recall that we have to work with $\tail_{K(t)}$ due to the fact that \eqref{eq:Kcomp} are not satisfied. Since $\tail_{K(t)}$ does not behave well with respect to scaling, we only expect weaker versions of these estimates to hold true (see \eqref{eq:weak-tail-inf} and \eqref{eq:key_improved-lb}):

Step 1: Let us first explain how to estimate the $L^1$-tail by the infimum. We will establish an estimate similar to \eqref{eq:tail-inf} (see \eqref{eq:weak-tail-inf}). We start by proving the following replacement of \eqref{eq:tailkey}: 
\begin{align}
\label{eq:tailkey2}
\begin{split}
&\cE^{K(t)}(u(t),-\tau^2 \U^{-p}(t)) \ge - c_1 R^{-\alpha} \Vert \U^{-p+1}(t)\Vert_{L^1(B_{R}(x_0))}\\
&\qquad\qquad\qquad + c_2  R^{d} \inf_{x \in B_{R}(x_0)} \U^{-p}(t,x) \tail_{K(t)}(u(t);3R/4,R,x_0).
\end{split}
\end{align}
In order to do so, we carry out the same computation as in the proof of \autoref{thm:improvedwHI-parabolic} but estimate $I$ as follows for any $z \in B_{\frac{3R}{4}}$, using \eqref{UJS} and that $\tau$ satisfies $\tau \ge c$ in $B_{\frac{7R}{8}}$:
\begin{align*}
I &= 2\int_{B_{R}(x_0)} \frac{\tau^2(x)}{\U^{p}(x)} \left( \int_{B_{R}(x_0)^{c}} u(y) K(t;x,y) \d y\right) \d x\\
&\ge c R^d \inf_{B_{\frac{R}{8}(z)}} \U^{-p} \left( \dashint_{B_{\frac{R}{8}}(z)} \int_{B_{R}(x_0)^{c}} u(y) K(t;x,y) \d y \d x\right)\\
&\ge c R^d \inf_{B_R}  \U^{-p}  \left( \int_{B_{R}(x_0)^c} u(y) K(t;z,y) \d y \right).
\end{align*}

As a consequence, using Young's inequality, and also the weak Harnack inequality (see \autoref{lemma:wPHI-gen})
\begin{align}
\label{eq:weak-tail-inf}
\begin{split}
& \dashint_{I_{R/2}^{\oplus}(t_0-R^{\alpha})} \tail_{K(t)}(u(t);3R/4,R,x_0) \d t \\
& \qquad\qquad \le c \left( \sup_{I_{R/2}^{\oplus}(t_0-R^{\alpha}) \times B_{R}(x_0)} \U^p \right) \left( \dashint_{I_{R}^{\oplus}(t_0-R^{\alpha}) \times B_{R}(x_0)} \U^{-p+1} \right)\\
&\qquad \qquad \le \delta \sup_{I_{R/2}^{\oplus}(t_0-R^{\alpha}) \times B_{R}(x_0)} \U + c(\delta) \inf_{I^{\oplus}_{R}(t_0+R^\alpha) \times B_R(x_0)} \U.
\end{split}
\end{align}

Step 2: Next, we aim to prove an improved local boundedness estimate (see \eqref{eq:key_improved-lb}). We observe that the proof in \autoref{sec:lb_second-proof} can easily be modified so that we obtain for any $\delta > 0$:
\begin{align}
\label{eq:key_improved-lb}
\begin{split}
&\sup_{I_{R/2}^{\ominus}(t_0) \times B_{R/2}(x_0)} u_+\\
&\qquad\le \delta \left( \sup_{t \in I_{R}^{\ominus}(t_0)}\dashint_{B_{R}(x_0) } u_+(t,x) \d x  \right) + c \delta^{-\frac{d+\alpha}{2\alpha}} \left( \dashint_{I_{R}^{\ominus}(t_0) \times B_R(x_0)} \hspace{-0.4cm} u_+^2  \right)^{1/2}\\
&\qquad\quad + c \delta^{-\frac{d+\alpha}{2\alpha}} \dashint_{I_{R}^{\ominus}(t_0)} \tail_{K(t)}(u_+(t);3R/4,R,x_0) \d t.
\end{split}
\end{align}
Indeed, by redefining
\begin{align*}
w_l(t,x) = \left(u(t,x) - C \int_{t_0-(r+\rho)^{\alpha}}^t \tail_{K(s)}(u_+(s);3R/4,R,x_0) \d s - l \right)_+,
\end{align*}
and not using \eqref{eq:Kupper} on the second summand in \eqref{eq:tail-split} in order to keep $\tail_{K(s)}(u_+(s);3R/4,R,x_0)$, we directly obtain \eqref{eq:key_improved-lb} upon following the same arguments as before. Note that \eqref{eq:key_improved-lb} can also be proved for adjusted time-space cylinders.

Step 3: Since $u \ge 0$ in $I_{2R}(t_0) \times B_{2R}(x_0)$, we can further estimate the following version of \eqref{eq:key_improved-lb} with accordingly adjusted time-space cylinders, using Young's inequality, to obtain:
\begin{align}
\label{eq:key_improved-lb2}
\begin{split}
&\sup_{I_{R/4}^{\ominus}(t_0 - R^{\alpha} + (\frac{R}{2})^{\alpha}) \times B_{R/4}(x_0)} u \\
&\qquad\le \delta \sup_{I_{R/2}^{\ominus}(t_0 - R^{\alpha} + (\frac{R}{2})^{\alpha} ) \times B_{R/2}(x_0)} u + c(\delta) \left( \dashint_{I_{R/2}^{\oplus}(t_0-R^{\alpha}) \times B_{R/2}(x_0)} u  \right)\\
&\qquad\quad +  c(\delta) \dashint_{I_{R/2}^{\oplus}(t_0-R^{\alpha})} \tail_{K(t)}(u(t);3R/4,R,x_0) \d t\\
&\qquad\le (1+c)\delta \sup_{I_{R}^{\ominus}(t_0 - R^{\alpha} + (\frac{R}{2})^{\alpha}) \times B_{R}(x_0)} u + c(\delta) \inf_{I^{\oplus}_{R}(t_0) \times B_R(x_0)} u.
\end{split}
\end{align}
Note that in the last step, we used \eqref{eq:weak-tail-inf} (after taking the limit $\eps \searrow 0$) and \autoref{lemma:wPHI-gen}. Moreover, we made use of the fact that $I_{R/2}^{\ominus}(t_0 - R^{\alpha} + (\frac{R}{2})^{\alpha}) = I_{R/2}^{\oplus}(t_0 - R^{\alpha})$.
From here, using that $u$ is locally bounded in $I \times \Omega$ by \autoref{lemma:locbd-gen}, we deduce the full parabolic Harnack inequality \eqref{eq:fPHI} by a classical covering and iteration argument based on \autoref{lemma:it} (see \cite{KaWe22b}, \cite{Coz17}, or \cite{DKP14}). 
\end{proof}

\subsection{Harnack-type inequalities for singular nonlocal operators}\ 

Let us end this section by commenting on the extension of our method to nonlocal operators with singular jumping measures $\mu(t;x,\d y)$ instead of $K(t;x,y)\d y$. Here, ``singular'' means that $\mu$ does not possess a density with respect to Lebesgue measure. The most prominent example of such a jumping measure is given by
\begin{align*}
\mu_{\text{axes}}(t;x,\d y)=\mu_{\text{axes}}(x,\d y) = (2-\alpha) \sum_{i = 1}^d |x_i - y_i|^{-1-\alpha} \d y_i \prod_{j \neq i}^d \delta_{x_j}(\d y_j).
\end{align*}
Although $\mu_{\text{axes}}$ is singular, the corresponding operator shares several properties with operators that have nice jumping kernels. Indeed, the corresponding operator $L_{\mu_{\text{axes}}}$ (or the energy form $\cE_{\mu_{\text{axes}}}$) also satisfies \eqref{eq:Poinc}, \eqref{eq:Sob}, and \eqref{eq:cutoff}. Consequently, the nonlocal De Giorgi-Nash-Moser theory carries over to this class of operators and weak supersolutions to the corresponding parabolic equation satisfy the weak parabolic Harnack inequality \eqref{eq:wPHI} (see for instance \cite{KaSc14}, \cite{DyKa20}, \cite{ChKa20}, and \cite{CKW23}). \\
On the other hand, most importantly, due to the singularity of the jumping kernel weak solutions do not satisfy the full Harnack inequality \eqref{eq:fPHI}, as it was observed in \cite{BaCh10} in the elliptic case. However, as an application of the technique developed in this article, we can show that a Harnack inequality involving certain tail terms does hold true (see \autoref{thm:fHI_singular}).

We define the following tail term, which can be seen as a hybrid between $\tail$ and $\tail_{K(t)}$:
\begin{align*}
\tail_{\text{axes}}(v;R,x_0) = \sup_{x \in B_R(x_0)} R^{\alpha} \sum_{i = 1}^d \hspace{-3cm}\int\limits_{\hspace{3cm}\{ h \in \R : (x + h e_i) \in \R^d \setminus B_R(x_0) \}} \hspace{-3cm} |v(x + h e_i)| |(x_0)_i - (x_i + h)|^{-1-\alpha} \d h.
\end{align*}

We have the following result, which is new even in the elliptic case:

\begin{theorem}
\label{thm:fHI_singular}
For any bounded, globally nonnegative weak solution $u$ to $\partial_t u - L_{\mu_{\text{axes}}} u = 0$ in $I \times \Omega$, for every $R > 0$, $t_0 \in I$, $x_0 \in \Omega$ with $I_{4R}(t_0) \times B_{4R}(x_0) \subset I \times \Omega$, we have 
\begin{align*}
\sup_{I^{\ominus}_R(t_0 - R^{\alpha}) \times B_R(x_0)} u \le c \inf_{I^{\oplus}_R(t_0) \times B_R(x_0)} u + c\dashint_{I^{\ominus}_{R}(t_0)} \hspace{-0.2cm} \tail_{\text{axes}}(u(t);R,x_0) \d t.
\end{align*}
Here, $c = c(d,\alpha_0,\lambda,\Lambda) > 0$ is a constant.
\end{theorem}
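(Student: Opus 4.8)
The plan is to run the same two-ingredient scheme that underlies \autoref{thm:fHI} and \autoref{thm:fHI_sharp-ass}: establish a local boundedness estimate for the subsolution part of $u$ involving the hybrid tail $\tail_{\text{axes}}$, combine it with the weak parabolic Harnack inequality for the supersolution part, and remove the resulting supremum by a covering argument. The key structural difference to \autoref{thm:fHI} is that $\tail_{\text{axes}}(u;R,x_0)$ cannot be absorbed: by \cite{BaCh10} no full Harnack inequality holds for $L_{\mu_{\text{axes}}}$, so the tail must survive on the right-hand side. Concretely, the two estimates I would prove (for suitably adjusted time-space cylinders) are
\begin{align*}
\sup_{I^{\ominus}_{R/2}(t_0)\times B_{R/2}(x_0)} u &\le c\left(\dashint_{I^{\ominus}_R(t_0)\times B_R(x_0)} u^2\right)^{1/2} + c\dashint_{I^{\ominus}_R(t_0)} \tail_{\text{axes}}(u(t);R,x_0)\,\d t,\\
\dashint_{I^{\ominus}_R(t_0-R^\alpha)\times B_R(x_0)} u &\le c\inf_{I^{\oplus}_R(t_0)\times B_R(x_0)} u .
\end{align*}
The second estimate is the weak parabolic Harnack inequality, which is available because $\cE_{\mu_{\text{axes}}}$ satisfies \eqref{eq:Poinc}, \eqref{eq:Sob}, and \eqref{eq:cutoff}; it is proved exactly as \autoref{lemma:wPHI-gen} (see \cite{KaSc14}, \cite{DyKa20}, \cite{ChKa20}, \cite{CKW23}).

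For the first estimate I would adapt the modified De Giorgi iteration of \autoref{sec:lb_second-proof}. One runs the iteration with the level-set functions
\[
w_l(t,x) = \Big(u(t,x) - C\int_{t_0-(r+\rho)^\alpha}^t \tail_{\text{axes}}(u_+(s);R,x_0)\,\d s - l\Big)_+ ,
\]
splitting the long-range interaction $\sup_{x\in B_{r+\rho/2}(x_0)}\int_{\R^d\setminus B_{r+\rho}(x_0)} u_+(s,y)\,\mu_{\text{axes}}(x,\d y)$ into a piece supported in $B_R(x_0)$ (intermediate tail) and a piece supported outside $B_R(x_0)$. The latter is precisely $R^{-\alpha}\tail_{\text{axes}}(u_+(s);R,x_0)$ and cancels against the term produced by the time-derivative of $w_l$ once $C$ is chosen appropriately; here it is essential \emph{not} to apply \eqref{eq:Kupper} to that summand, since that bound is exactly what would turn $\tail_{\text{axes}}$ into the honest $\tail$ and is unavailable for the singular measure. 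Because $u$ is assumed bounded and nonnegative, the iteration converges and yields the estimate with the intermediate tail still present; the intermediate tail is then removed by an axes-analog of \autoref{lemma:intermediate-tail}, whose proof uses only the Caccioppoli inequality for $\mu_{\text{axes}}$ (valid since \eqref{eq:cutoff}, \eqref{eq:Poinc}, \eqref{eq:Sob} hold) and the iteration \autoref{lemma:it}, and whose natural tail contribution is again of $\tail_{\text{axes}}$ type.

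From the two displayed estimates the theorem then follows by the endgame used for \autoref{thm:fHI}: estimate $(\dashint u^2)^{1/2}\le \eps\sup u + c(\eps)\dashint u$ by Young's inequality, absorb $\eps\sup u$ through a covering argument as in \cite[Theorem 6.9]{Coz17} combined with \autoref{lemma:it} (using that $u$ is bounded), and apply the weak parabolic Harnack inequality to the resulting average over a past cylinder, keeping the $\tail_{\text{axes}}$ term untouched on the right. The main obstacle, however, is the treatment of the intermediate tail, i.e.\ the part of $\int_{\R^d\setminus B_{r+\rho}(x_0)} u_+(s,y)\,\mu_{\text{axes}}(x,\d y)$ coming from $y\in B_R(x_0)$. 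Since $\mu_{\text{axes}}$ lives on the coordinate lines through $x$, this piece is not controlled by a solid average $\dashint_{B_R(x_0)} u_+$ as in the absolutely continuous case; one must exploit that for $x\in B_{r+\rho/2}(x_0)$ and $y=x+he_i\in B_R(x_0)\setminus B_{r+\rho}(x_0)$ the relevant coordinate weight $|(x_0)_i-(x_i+h)|^{-1-\alpha}$ stays bounded (of order $\rho^{-1-\alpha}$ on the scale of $R$), so that this contribution collapses to a one-dimensional integral of $u_+$ that can be fed back into the energy and iteration machinery. Getting this bookkeeping right, together with checking that the Caccioppoli inequality and the measure-theoretic steps of \autoref{sec:lb} and \autoref{sec:whi} transfer verbatim to the singular measure $\mu_{\text{axes}}$, is where the real work lies; everything else is a transcription of \autoref{sec:lb_second-proof}, \autoref{sec:whi}, and \autoref{sec:main-proofs}.
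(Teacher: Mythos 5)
Your overall scaffolding is right — local boundedness plus the weak parabolic Harnack inequality plus a covering argument, with the remaining tail outside $B_R$ cancelled against the time-derivative term via the modified level sets $w_l$ — but your treatment of the intermediate tail does not match the paper's and, as you half-suspect, would not close. There is no workable axes-analog of \autoref{lemma:intermediate-tail}: the point of that lemma is that the intermediate tail in the absolutely continuous case is bounded by a \emph{solid} average $\dashint_{B_R} u_+$, whereas the axes measure produces a one-dimensional line integral of $u_+$ along coordinate axes, which is simply not dominated by $L^1(B_R)$ or $L^2(B_R)$ norms. Saying it ``can be fed back into the energy and iteration machinery'' names the obstacle without resolving it.

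What the paper actually does at this step is use the hypothesis that $u$ is \emph{bounded} in a substantive way you overlook: it bounds the intermediate-tail contribution by
$\sup_{x\in B_{r+\rho/2}(x_0)}\int_{B_R(x_0)\setminus B_{r+\rho}(x_0)} u_+(s,y)\,\mu_{\text{axes}}(x,\d y) \le \bigl(\sup_{B_R(x_0)} u(s)\bigr)\,\mu_{\text{axes}}\bigl(x,B_R(x_0)\setminus B_{r+\rho}(x_0)\bigr)\le c\rho^{-\alpha}\sup_{B_R(x_0)}u(s)$,
where the identification of the $\mu_{\text{axes}}$-essential supremum with the pointwise supremum requires the H\"older continuity of $x\mapsto u(s,x)$, which in turn is available only because $u$ is bounded (via the regularity result of \cite{KaSc14} as a consequence of the weak parabolic Harnack inequality). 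The quantity $\sup_{I_R^{\ominus}(t_0)\times B_R(x_0)}|u|$ then takes the role of the $L^\infty$-tail in the De Giorgi iteration of \autoref{sec:lb_second-proof}, producing an estimate of the form \eqref{eq:key_improved-lb2} with $\tail_{\text{axes}}$, and this supremum is absorbed at the very end by the covering argument — not ahead of time by any analog of \autoref{lemma:intermediate-tail}. So the boundedness hypothesis is not there to ``make the iteration converge''; it is what makes the intermediate tail controllable at all, and you should restructure your argument around that observation rather than around a nonexistent axes-version of the intermediate-tail lemma.
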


\begin{proof}
Note that it suffices to show the following variant of \eqref{eq:locbdL1tail}
\begin{align}
\label{eq:locbdL1tail_sing}
\begin{split}
\sup_{I^{\ominus}_{R/4}(t_0) \times B_{R/4}(x_0)} \hspace{-0.2cm} u &\le c \left(\dashint_{I^{\ominus}_{R/2}(t_0) \times B_{R/2}(x_0)} \hspace{-0.2cm} u^2(t,x) \d x \d t \right)^{1/2} \\
&\quad + c\dashint_{I^{\ominus}_{R/2}(t_0)} \hspace{-0.2cm} \tail_{\text{axes}}(u(t);R/2,x_0) \d t,
\end{split}
\end{align}
since we already know that $u$ satisfies the weak Harnack inequality \eqref{eq:wPHI}.

To prove this result, we proceed as in the proof of \eqref{eq:key_improved-lb}, but we use \eqref{eq:cutoff} instead of \eqref{eq:Kupper}. The only place where this replacement is not straightforward is in the estimate of the tail term in \eqref{eq:tail-split}. Here, we proceed as follows:
\begin{align*}
&\sup_{x \in B_{r + \frac{\rho}{2}}(x_0)} \int_{\R^d \setminus B_{r+\rho}(x_0)} |u(s,y)| \mu_{\text{axes}}(x,\d y) \\
&\qquad = \sup_{x \in B_{r + \frac{\rho}{2}}(x_0)} \int_{B_{R}(x_0) \setminus B_{r+\rho}(x_0)} \hspace{-0.2cm} |u(s,y)| \mu_{\text{axes}}(x,\d y) \\
&\qquad\quad + \hspace{-0.2cm} \sup_{x \in B_{r + \frac{\rho}{2}}(x_0)} \int_{\R^d \setminus B_{R}(x_0)} \hspace{-0.2cm} |u(s,y)| \mu_{\text{axes}}(s;x,\d y)\\
&\qquad\le \left(\sup_{B_{R}(x_0)} u(s) \right) \sup_{x \in B_{r + \frac{\rho}{2}}(x_0)} \mu_{\text{axes}}(x,B_{R}(x_0) \setminus B_{r+\rho}(x_0))\\
&\qquad\quad + c R^{-\alpha} \tail_{\text{axes}}(u(s);R,x_0)\\
&\qquad\le c \rho^{-\alpha} \left(\sup_{B_{R}(x_0)} u(s) \right) + c R^{-\alpha} \tail_{\text{axes}}(u(s);R,x_0).
\end{align*}
Note that in the first estimate, we used that the essential supremum of $u(s)$ in $B_R(x_0)$ with respect to $\mu_{\text{axes}}(s)$ coincides with its classical supremum because we assumed that $u$ is bounded and therefore we already know that $x \mapsto u(s,x)$ is also H\"older continuous in $B_R(x_0)$ due to the main result in \cite{KaSc14} (see also \cite{KaWe22a}, \cite{CKW23}, \cite{Wei22thesis}) as a consequence of the weak parabolic Harnack inequality.\\
From here, we continue exactly as in the proof of \autoref{sec:lb_second-proof}, by treating $\sup_{I_R^{\ominus}(t_0) \times B_{R}(x_0)} |u|$ in the same way as the tail term is treated in the well-known proof of the nonlocal De Giorgi iteration (see \cite{KaWe22b}). Thus, we obtain \eqref{eq:key_improved-lb2} with $\tail_{\text{axes}}$ instead of $\tail_{K(t)}$. Again, by a classical covering and iteration argument, we deduce the desired result.
\end{proof}


\end{document}